\theoremstyle{plain}
\newtheorem{theorem}{Theorem}[section]
\newtheorem{lemma}[theorem]{Lemma}
\newtheorem{cor}[theorem]{Corollary}
\newtheorem{prop}[theorem]{Proposition}
\theoremstyle{remark}
\theoremstyle{definition}
\newtheorem{claim}[theorem]{Claim}
\newtheorem{defn}[theorem]{Definition}
\newtheorem{Remark}[theorem]{Remark}
\numberwithin{equation}{section}
\numberwithin{theorem}{section}
\newcommand{\NN}{\mathbb N}
\newcommand{\ZZ}{\mathbb Z}
\newcommand{\QQ}{\mathbb Q}
\newcommand{\A}{\mathcal A}
\newcommand\hfld[2]{\smash{\mathop{\hbox to 10mm{\rightarrowfill}}
     \limits^{\scriptstyle#1}_{\scriptstyle#2}}}
\newcommand\hflg[2]{\smash{\mathop{\hbox to 10mm{\leftarrowfill}}
     \limits^{\scriptstyle#1}_{\scriptstyle#2}}}
\def\geo{{\partial_\infty}} 
\def\De{\Delta}
\def\Del{\Delta}
\def\al{\alpha}
\def\be{\beta}
\def\ga{\gamma}
\def\R{{\mathbb R}}
\def\la{\lambda}
\def\La{\Lambda}
\def\RA{\Rightarrow}
\def\B{\mathcal B}
\def\T{\mathcal T}
\def\F{\mathcal F}
\def\I{\mathcal I}
\def\ol{\overline}
\def\ul{\underline}
\def\<{\langle}
\def\>{\rangle}
\def\D{\partial}
\def\ov{\overrightarrow}
\def\a{\mathbf a}
\def\tits{\D_{Tits}}
\def\La{\Lambda}
\def\si{\sigma}
\title{Ideal triangles in Euclidean buildings \\
and branching to Levi subgroups}
\author{Thomas J. Haines, Michael Kapovich, John J. Millson}
\date{}
\begin{document}

\date{\today}

\begin{abstract}
Let $\ul{G}$ denote a connected reductive group, defined and split over $\ZZ$, and let $\ul{M} \subset \ul{G}$ denote a Levi subgroup.  
In this paper we study varieties of geodesic triangles with fixed vector-valued side-lengths $\alpha,\beta,\gamma$ in the Bruhat-Tits buildings associated to $\ul{G}$, along with varieties of ideal triangles associated to  the pair $\ul{M}\subset \ul{G}$.  The ideal triangles have a fixed side containing a fixed base vertex and a fixed infinite vertex $\xi$ such that other infinite side containing $\xi$ has fixed ``ideal length'' $\lambda$ and the remaining finite side has fixed length $\mu$.  
We establish an isomorphism between varieties in the second family and certain varieties in the first 
family (the pair $(\mu,\la)$ and the triple 
$(\alpha,\beta,\gamma)$ satisfy a certain relation).  We apply these results to the study of the Hecke ring of $\ul{G}$ and the  
restriction homomorphism ${\mathcal R}(\ul{\widehat{G}})\to {\mathcal R}(\ul{\widehat{M}})$ between representation rings.  We deduce some new saturation theorems for constant term coefficients and for the structure constants of the restriction homomorphism.
\end{abstract}

\maketitle

\markboth{T. Haines, M. Kapovich, J. Millson}
{Ideal triangles and branching to Levi subgroups}

\section{Introduction}
Let $\ul{G}$ be a connected reductive group, defined and split over $\ZZ$, and fix a split maximal torus $\ul{T}$ also defined over $\ZZ$.  Let $\widehat{G} = \widehat{G}(\mathbb C)$ denote the Langlands dual group of $\ul{G}$, and let $\mathcal R(\widehat{G})$ denote its representation ring.  Let ${\mathcal H}_{G}$ denote the (spherical) Hecke ring associated to $\ul{G}(\mathbb F_q(\!(t)\!))$, as described in section \ref{general}.  
The goal of this paper is to understand various connections between the rings ${\mathcal H}_{{G}}$ 
and ${\mathcal R}(\widehat{{G}})$.  Both come with bases and associated structure constants $m_{\al,\be}(\ga), \, n_{\al,\be}(\ga)$ parameterized by the same set, namely triples $\alpha, \beta, \gamma$ of  $\ul{G}$-dominant elements of the cocharacter lattice of $\ul{T}$. 
Moreover, given any Levi subgroup $\ul{M} \subset \ul{G}$, 
we have the constant term homomorphism
$$
c^G_M: {\mathcal H}_G \to {\mathcal H}_M
$$
and the restriction homomorphism
$$
r^G_M: {\mathcal R}(\widehat{G}) \to {\mathcal R}(\widehat{M});
$$
cf.~section \ref{general}.  Assuming $\ul{M}$ contains $\ul{T}$, both maps can be described by collections of constants $c_\mu(\la)$ and $r_\mu(\la)$, where $\mu$ resp. $\la$ ranges over the $\ul{G}$-dominant resp. $\ul{M}$-dominant cocharacters of $\ul{T}$; cf.~loc.~cit.  In this paper we are studying connections between entries appearing in the following table.  

\begin{table}[h]
\caption{Constants associated to ${\mathcal H}$ and ${\mathcal R}$}

\begin{tabular}{|c|c|}
\hline 
$c_\mu(\la)$  & $r_\mu(\la)$      		\\
\hline  
$m_{\al,\be}(\ga)$ & $n_{\al,\be}(\ga)$ 	\\ 
\hline
\end{tabular} 
\label{table}
\end{table}

The connection between the entries in the bottom row was studied in \cite{KLM3} and \cite{KM2}. In this paper we will establish connections between the entries in the top row, the entries in the first column and the entries in the second column. As a corollary we will establish saturation results for the entries in the top row. 

It was established in \cite{KLM3} that $m_{\al,\be}(\ga)$ ``counts'' the number of $\mathbb F_q$-rational points in the variety of triangles ${\mathcal T}(\al,\be;\ga)$ in the Bruhat-Tits building of $\ul{G}(\bar{\mathbb F}_p(\!(t)\!))$. Similarly, fixing a parabolic subgroup $\ul{P} = \ul{M} \cdot \ul{N}$ with Levi factor $\ul{M}$, we will see that 
$c_\mu(\la)$ counts (up to a certain factor depending only on $\ul{P}$, $q$, and $\la$) 
the number of $\mathbb F_q$-points in the variety of ideal triangles $\I\T(\la, \mu; \xi)$ with the 
ideal vertex $\xi$ fixed by $\ul{P}(\bar{\mathbb F}_p(\!(t)\!))$ (see section \ref{general} for the definition). Given $\la, \mu$, we will find a certain range of $\al, \be, \ga$ depending on $\la,\mu$, so that 
that the varieties  $\I\T(\la, \mu; \xi)$ and ${\mathcal T}(\al, \be; \ga)$ are naturally isomorphic over $\mathbb F_p$, thereby providing a geometric explanation for the numerical equalities 
\begin{equation} \label{num1}
c_\mu(\lambda) q^{\<\rho_N,\la\>} |K_{M,q}\cdot x_\la|= m_{\al, \be}(\ga)
\end{equation}
and 
\begin{equation} \label{num2}
r_{\mu}(\la) = n_{\alpha,\beta}(\gamma).
\end{equation} 

Let us state our main results a little more precisely.  
The equality (\ref{num2}) has a short proof using the Littelmann path models for each side (see section \ref{Littelmann_sec}), and this proof gave rise to the definition of the inequality $\nu \geq^P \mu$ (see section \ref{precise} for the definition).  Now fix any coweight $\nu$ that satisfies this inequality, so that in particular $\nu+ \lambda$ will be $\ul{G}$-dominant for any $\ul{M}$-dominant cocharacter $\lambda$ appearing as a weight in $V^{\widehat{G}}_\mu$.  We can now state our first main theorem (Theorem \ref{key}).

\begin{theorem} \label{thm_key0}  Suppose $\mu,\la$ are as above $\nu$ is any auxiliary cocharacter satisfying $\nu \geq^P \mu$.  Then there is an isomorphism of $\mathbb F_p$-varieties
$$
\T(\nu + \la, \mu^*;\nu) \cong \I\T(\la, \mu;\xi).
$$
\end{theorem}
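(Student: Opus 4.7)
The strategy is to identify each ideal triangle in $\I\T(\la, \mu; \xi)$ with the finite triangle obtained by truncating the fixed ray $[o, \xi)$ at the point of vector-valued distance $\nu$ from the base vertex $o$; the inverse map is obtained by prolonging the $\nu$-side of a finite triangle back out to $\xi$. The non-trivial content is that the vector-valued side-lengths then match exactly as stated, and the hypothesis $\nu \geq^P \mu$ is precisely what makes this so.

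First I would construct the map $\Phi \colon \I\T(\la, \mu; \xi) \to \T(\nu + \la, \mu^*; \nu)$. Given an ideal triangle with finite vertex $v$, choose any apartment $A$ containing the fixed ray $[o, \xi)$ and let $z \in A$ be the unique vertex with $\ov{oz} = \nu$ pointing toward $\xi$; independence of $A$ follows from the axiom that any two apartments containing a common ray agree on a convex neighborhood of it. Set $\Phi(o, v, \xi) := (o, z, v)$. The sides $[o, z]$ and $[o, v]$ have types $\nu$ and $\mu$ by construction (so $[v, o]$ has type $\mu^*$). The technical heart of the argument is the identity $\ov{vz} = \nu + \la$: I would pick an apartment $A'$ containing $v$ together with a common tail of the rays $[o, \xi)$ and $[v, \xi)$ (existence of such an $A'$ follows from the standard axiom that any point and any sector germ lie in a common apartment), and the hypothesis $\nu \geq^P \mu$ guarantees that $z$ lies in $A'$ as well. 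Writing $m$ for the confluence point of the two rays, one has in $A'$ the decomposition
\[
\ov{vz} \;=\; \ov{vm} + \ov{mz}.
\]
Since $\ov{vm}$ has type $\la$ by the definition of ideal length and $\ov{mz}$ is the terminal sub-segment of $\ov{oz}$ of total type $\nu$, summing in $A'$ gives $\ov{vz}$ of type $\nu + \la$, which is $\ul{G}$-dominant by hypothesis.

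For the inverse $\Psi$, given $(o, z, v) \in \T(\nu + \la, \mu^*; \nu)$ with $o$ the fixed base vertex I would prolong $[o, z]$ to a full geodesic ray; its ideal endpoint is the sought-for $\xi$ because the direction of $\ov{oz}$, namely $\nu$, lies in the Weyl chamber at infinity attached to $\xi$ (this is part of what $\nu \geq^P \mu$ encodes, via the $\ul{P}$-fixing of $\xi$). The resulting ideal triangle $(o, v, \xi)$ lies in $\I\T(\la, \mu; \xi)$: its finite side has length $\mu$ by assumption, and the ideal length of its $\la$-side is $\la$ by the identity of the previous paragraph. That $\Phi \circ \Psi$ and $\Psi \circ \Phi$ are identity maps is immediate from the constructions. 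Both $\Phi$ and $\Psi$ are morphisms of $\mathbb F_p$-varieties because truncation and prolongation correspond to explicit algebraic operations in the affine Grassmannian realization of the building.

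\textbf{Main obstacle.}
The principal technical point is the side-length identity $\ov{vz} = \nu + \la$, together with the conceptual role of $\nu \geq^P \mu$ in forcing $v$, $z$ and $m$ into a single apartment. I expect the detailed argument to rely on retractions centered at sectors based at $\xi$ and careful Weyl-chamber bookkeeping to convert the inequality $\nu \geq^P \mu$ into the inclusion $z \in A'$. A secondary, more routine task is upgrading the pointwise construction to a morphism of $\mathbb F_p$-schemes, which should reduce to phrasing truncation and prolongation in terms of Iwahori-orbit intersections on the affine Grassmannian or affine flag variety.
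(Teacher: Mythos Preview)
Your proposal has the right geometric intuition but contains a genuine gap and some orientation errors that would derail the argument.

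\textbf{Orientation.}  The point in the direction of $\xi$ is $x_{-\nu}$, not a point with $\ov{oz}=\nu$: the hypothesis $\nu\geq^P\mu$ makes $\nu$ $N$-dominant and $M$-central, so $-\nu$ points toward the face of $\tits\B$ fixed by $P$.  Correspondingly, the paper's isomorphism is not ``keep $v$ and add a new vertex $z$'' but rather left translation by $t^\nu$: one shows the \emph{equality} of subvarieties $\T(\nu+\la,\mu^*;\nu)=t^\nu\bigl(\I\T(\la,\mu;\xi)\bigr)$ in ${\rm Gr}^G$, which unwinds to
\[
(t^{-\nu}Kt^\nu)x_\la\cap Kx_\mu \;=\; K_Px_\la\cap Kx_\mu.
\]
Your triangle $(o,z,v)$ with $z=x_{-\nu}$ does not lie in $\T(\nu+\la,\mu^*;\nu)$ (whose fixed third vertex is $x_\nu$, on the side \emph{away} from $\xi$); it is the $t^{-\nu}$-translate of that space.

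\textbf{The confluence-point step fails.}  Your computation of $\ov{vz}$ rests on a ``confluence point $m$'' where $[o,\xi)$ and $[v,\xi)$ merge.  But two rays toward $\xi$ need not be strongly asymptotic: already if $v\in\A$, the rays $\ol{o\xi}$ and $\ol{v\xi}$ are parallel and coincide only when $v-o$ is parallel to $\xi$.  Moreover, the Busemann value $b_{\xi,\De_M}(v)=\la$ is defined as a limit of $\widetilde{d}_{\De_M}$-differences, not via any confluence point, so the assertion ``$\ov{vm}$ has type $\la$'' has no content.

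\textbf{The real work.}  What must be proved is exactly the displayed orbit equality above, which the paper reformulates as the equality of two retractions on $\overline{Kx_\mu}$:
\[
\rho_{-\nu,\De_G-\nu}\big|_{\overline{Kx_\mu}} \;=\; \rho_{K_P,\De_M}\big|_{\overline{Kx_\mu}}.
\]
This is the entire substance of the theorem.  The paper gives two proofs: the main text reduces to showing that $\rho_{-\nu_1,\De_G-\nu_1}=\rho_{-\nu_2,\De_G-\nu_2}$ on $\overline{Kx_\mu}$ for any $\nu_1,\nu_2\geq^P\mu$, proved by an induction on minimal galleries from the base alcove (tracking whether the base alcoves ${\bf b}_i=w_0{\bf a}-\nu_i$ lie on the same side of each wall crossed, using that $\nu\geq^P\mu$ controls exactly the $\Phi_N$-walls meeting ${\rm Conv}(W\mu)$).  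The Appendix gives a more geometric proof closer in spirit to yours: it shows that for each point $x$ on $\ol{ov}$ there is an apartment containing the alcove $\a_{-\nu}$, the point $x$, and $\xi$ in its ideal boundary --- but this too requires an inductive gallery argument together with Hecke-path machinery, not just the building axiom you invoke.  Your sentence ``the hypothesis $\nu\geq^P\mu$ guarantees that $z$ lies in $A'$'' is precisely the statement to be proved, and neither the standard axiom (point and sector germ share an apartment) nor your confluence picture supplies it.
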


As detailed in section \ref{bounds_sec}, the number of top-dimensional irreducible components of $\T(\nu+\la,\mu^*;\nu)$ (resp. $\I\T(\la,\mu;\xi)$) is simply the multiplicity $n_{\nu+\la,\mu^*}(\nu)$ (resp. $r_{\mu}(\la)$).  Similarly, in section \ref{geom_int_sec} we show that the number of $\mathbb F_q$-points on $\T(\nu+\la,\mu^*;\nu)$ (resp.~$\I\T(\la,\mu;\xi)$) is given by $m_{\nu+\la,\mu^*}(\nu)$ (resp. $c_{\mu}(\la)$, up to a factor).  Thus, Theorem \ref{thm_key0} implies the numerical equalities (\ref{num1}) and (\ref{num2}).  This is stated more completely in Theorem \ref{main}.

Because of the homogeneity properties of the inequality $\nu \geq^P \mu$, these equalities mean that saturation theorems for $n_{\al,\be}(\ga)$ resp.~$m_{\al,\be}(\ga)$ imply saturation theorems for $r_\mu(\la)$ resp.~$c_{\mu}(\la)$.  The following summarizes part of Corollary \ref{corollary:main}.

\begin{cor} \label{corollary:main0}
The quantities $c_{\mu}(\la)$ satisfy a saturation theorem with saturation factor $k_\Phi$, and the 
quantities $r_\mu(\la)$ satisfy a saturation theorem with saturation factor $k^2_\Phi$.
\end{cor}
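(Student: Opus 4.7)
The plan is to transfer known saturation results for the ``bottom row'' structure constants $m_{\al,\be}(\ga)$ and $n_{\al,\be}(\ga)$ through the numerical identities (\ref{num1}) and (\ref{num2}) supplied by Theorem \ref{thm_key0}. Given a pair $(\mu,\la)$, I first pick an auxiliary cocharacter $\nu$ with $\nu \geq^P \mu$ and set $\al = \nu + \la$, $\be = \mu^*$, $\ga = \nu$. Since the factor $q^{\<\rho_N,\la\>}|K_{M,q}\cdot x_\la|$ appearing in (\ref{num1}) is a strictly positive integer, that identity gives
\[
c_\mu(\la) > 0 \iff m_{\al,\be}(\ga) > 0,
\]
and (\ref{num2}) gives $r_\mu(\la) > 0 \iff n_{\al,\be}(\ga) > 0$.

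The crucial observation is that the assignment $(\mu,\la) \mapsto (\al,\be,\ga)$ is \emph{homogeneous}, i.e.\ it commutes with scaling by positive integers. Indeed, the inequality $\nu \geq^P \mu$ is visibly homogeneous (cf.\ section \ref{precise}), so after replacing $(\mu,\la)$ by $(k\mu,k\la)$ one may take $k\nu$ as the auxiliary cocharacter, whereupon the associated triple becomes $(k\al,k\be,k\ga)$. This is what allows a single scaling factor to propagate cleanly from the bottom row of Table \ref{table} to the top row.

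Next, I would invoke the saturation theorem for $m_{\al,\be}(\ga)$ with factor $k_\Phi$ established in \cite{KLM3}, and the saturation theorem for $n_{\al,\be}(\ga)$ with factor $k_\Phi^2$ established in \cite{KM2}. Both require the natural lattice hypothesis $\al+\be-\ga \in Q^\vee$; in our situation $\al+\be-\ga = \la+\mu^*$, which is precisely the natural necessary condition for $c_\mu(\la)$ resp.\ $r_\mu(\la)$ to be nonzero. Combining homogeneity with these saturation statements then yields the implication chain
\[
c_{k_\Phi \mu}(k_\Phi \la) > 0 \Longleftrightarrow m_{k_\Phi \al, k_\Phi \be}(k_\Phi \ga) > 0 \Longrightarrow m_{\al,\be}(\ga) > 0 \Longleftrightarrow c_\mu(\la) > 0,
\]
and the analogous chain for $r$ with $k_\Phi^2$ in place of $k_\Phi$; this is exactly the content of the corollary.

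The main obstacle is bookkeeping rather than a new idea: one must verify that an $\nu$ satisfying $\nu \geq^P \mu$ can be chosen integrally (not merely rationally) and that the scaling $(\mu,\la)\mapsto(k\mu,k\la)$ preserves this admissibility, and one must check that the coroot-lattice conditions on the two sides of (\ref{num1}) and (\ref{num2}) correspond under the identifications used in Theorem \ref{thm_key0}. Once these compatibilities are in place, the corollary follows at once from Theorem \ref{thm_key0} and the cited saturation theorems.
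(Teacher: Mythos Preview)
Your overall strategy is exactly the paper's: fix $\nu \geq^P \mu$, set $(\al,\be,\ga)=(\nu+\la,\mu^*,\nu)$, use the equivalences $c_\mu(\la)\neq 0 \Leftrightarrow m_{\al,\be}(\ga)\neq 0$ and $r_\mu(\la)\neq 0 \Leftrightarrow n_{\al,\be}(\ga)\neq 0$ coming from Theorem \ref{thm_key0}, exploit the homogeneity of $\geq^P$ to make these equivalences compatible with scaling, and then import the saturation theorems for $m$ from \cite{KLM3} and for $n$ from \cite{KM2}. The bookkeeping points you flag (integral choice of $\nu$, coroot-lattice condition $\al+\be-\ga = \la+\mu^*$) are precisely the ones the paper checks.

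There is, however, a genuine slip in your displayed implication chain: you have written
\[
m_{k_\Phi \al, k_\Phi \be}(k_\Phi \ga) > 0 \Longrightarrow m_{\al,\be}(\ga) > 0,
\]
but this is \emph{not} what the saturation theorem in \cite{KLM3} says, and indeed this implication is false in general. The saturation statement runs the other way: $m_{N\al,N\be}(N\ga)\neq 0$ for some $N$ implies $m_{k_\Phi\al,k_\Phi\be}(k_\Phi\ga)\neq 0$. Correspondingly, the corollary asserts $c_{N\mu}(N\la)\neq 0 \Rightarrow c_{k_\Phi\mu}(k_\Phi\la)\neq 0$, not $c_{k_\Phi\mu}(k_\Phi\la)\neq 0 \Rightarrow c_\mu(\la)\neq 0$. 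The correct chain is
\[
c_{N\mu}(N\la)\neq 0 \Longleftrightarrow m_{N\al,N\be}(N\ga)\neq 0 \Longrightarrow m_{k_\Phi\al,k_\Phi\be}(k_\Phi\ga)\neq 0 \Longleftrightarrow c_{k_\Phi\mu}(k_\Phi\la)\neq 0,
\]
and analogously for $r$ with $k_\Phi^2$. Once you reverse the direction, your argument matches the paper's proof line for line.
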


For the precise formulation of these results we refer the reader to section \ref{precise}. 
Since there are two groups of mathematicians interested in the results of this paper, 
we will present both 
algebraic and geometric interpretations of the concepts and results.

\medskip Here are a few words on the relation of this paper to the prior work. 
In the earlier works \cite{KLM1}, \cite{KLM2}, \cite{KLM3}, Leeb and the second and third named authors studied geometric and representation-theoretic problems {\bf Q1}, {\bf Q2}, {\bf Q3}, {\bf Q4} (see page 1 of \cite{KLM3} for the precise formulations). 
In the present paper we study the analogues of {\bf Q3}, {\bf Q4} for group pairs $(G, M)$. 
The problems analogous to {\bf Q1}, {\bf Q2} for group pairs $(G, M)$ were studied in \cite{BeSj} and \cite{F} respectively. The paper \cite{BeSj} actually studies the problem for general group pairs $G,M$, where $G$ is a reductive group and $M$ is any reductive subgroup. 

\medskip
Let us give an outline of the contents of this article.  In section \ref{general} we recall some standard definitions and notation and we also define the notions of based triangles and based ideal triangles in the building.  In section \ref{precise} we state our main results.  In section \ref{Littelmann_sec} we give a simple proof of one of main results using Littelmann paths, and thereby explain the origin of the inequality $\nu \geq^P \mu$.  In section \ref{geo} we give a detailed study of based ideal triangles and the corresponding Busemann functions.  We translate Theorem \ref{key} into a statement about retractions and study those retractions in sections \ref{retractions_sec} and \ref{sharp_sec}; the proof of Theorem \ref{key} is given in section \ref{key_thm_proof_sec}.  The rest of the paper until section \ref{equidim_sec} is directed toward the proof of Theorem \ref{main}.  We prove some a priori bounds on dimensions of the varieties of (ideal) triangles in section \ref{bounds_sec}; these give geometric interpretations for the numbers $n_{\al,\be}(\ga)$ and $r_\mu(\la)$ appearing in Theorem \ref{main}.  Section \ref{geom_int_sec} likewise gives necessary geometric interpretations for the quantities $m_{\al,\be}(\ga)$ and $c_\mu(\la)$.  In section \ref{Thm_main_proof} we put the pieces together and prove Theorem \ref{main} and Corollary \ref{corollary:main}.  In section \ref{equidim_sec} we provide some equidimensionality statements which are related to those given in \cite{Haines2} for fibers of convolution morphisms. Finally, in the Appendix (section \ref{appendix}) we give an alternative, more geometric, proof of the main ingredient in the proof of Theorem \ref{thm_key0}, namely, the equality of the retractions $\rho_{-\nu,\De_G-\nu}$ and $\rho_{K_P,\De_M} =b_{\xi,\De_M}$ on each geodesic 
$\ol{oz}$ of $\De_G$-length $\mu$, when $\nu\ge^P\mu$.

\medskip
{\bf Acknowledgments.} All three authors were supported  by the NSF FRG grant, DMS-05-54254 and DMS-05-54349.  The first author was also supported by NSF grant DMS-09-01723, the second author by NSF grant DMS-09-05802, and the  third author by NSF grant DMS-09-07446.   In addition the first author was supported by a University of Maryland Graduate Research Board Semester Award and the third author by the Simons 
Foundation. The authors are  grateful to A.~Berenstein and S.~Kumar  
for explaining how to relate $n_{\al,\be}(\ga)$ and $r_\mu(\la)$, for some appropriate choices of $\alpha, \beta, \gamma,\mu,\lambda$, using representation theory. 

\section{Notation and definitions}\label{general}

\subsection{Algebra}

In what follows, all the algebraic groups will be over $\ZZ$. Let $\ul{G}$ be a split connected reductive group, and let $\ul{T}\subset \ul{G}$ be a split maximal torus.   Fix a Levi subgroup $\ul{M} \subset \ul{G}$ which contains $\ul{T}$.

Choose a parabolic subgroup $\ul{P} \subset \ul{G}$ which has $\ul{M}$ as a Levi factor.  Let $\ul{P}= \ul{M} \cdot \ul{N}$ be a Levi splitting. Then choose a Borel subgroup $\ul{B}$ of $\ul{G}$ which contains $\ul{T}$ and is contained in $\ul{P}$.  
Let $\ul{U}\subset \ul{B}$ be the unipotent radical of $\ul{B}$.  We then have $\ul{N}\subset \ul{U}$. 

Let $\Phi$ denote the set of roots for $(\ul{G}, \ul{T})$, let $\Phi_N$ denote the set of roots for $\ul{T}$ 
appearing in ${\rm Lie}(\ul{N})$ and let $\Phi_M$ denote all roots in $\Phi$ which belong to $\ul{M}$.  
We let $Q(\Phi^\vee)$ denote the coroot lattice and and $P(\Phi^\vee)$ the coweight lattice. 

The choice of $\ul{B}$ (resp. $\ul{B}_M := \ul{B} \cap \ul{M}$) gives a notion of positive (co)root, and $\ul{G}$-dominant (resp. $\ul{M}$-dominant) element of 
$\mathcal A := X_*(\ul{T}) \otimes \mathbb R$.  Let $\rho$ denote\footnote{We also use the symbol $\rho$ in the context of retractions of buildings (see section \ref{retractions_sec}) but no confusion should result from this.} the half-sum of the $\ul{B}$-positive roots $\Phi^+$. Similarly, we define 
$\rho_N$ resp. $\rho_M$ to be the half-sums of all roots in $\Phi_N$ resp.~positive roots in $\Phi_M$.  Recall that $W$, the Weyl group of $\ul{G}$, acts by reflections on 
$\mathcal A$ with fundamental domain $\Del_G$ which is the convex hull of the $\ul{G}$-dominant coweights. Also, we define 
$\Del_M$ as the convex hull of the $\ul{M}$-dominant coweights, so that $\Del_G\subset \Del_M$ and $\De_M$ is the fundamental domain of $W_M$, the Weyl group 
$N_{\ul{M}}(\ul{T})/\ul{T}$ for $\ul{M}$. We let $\widetilde{W}$ denote the extended affine Weyl group of $\ul{G}$, i.e., 
$\widetilde{W}=\Lambda\rtimes W$, where $\Lambda := X_*(\ul{T})$. 

Given $\la\in  X^*(\ul{T})$ or $X_*(\ul{T})$, define $\la^* := -w_0\la$, where $w_0\in W$ is the longest element. Note that $\rho^*=\rho$.   Set $k_\Phi={\rm lcm}(a_1,...,a_l)$, where $\sum_{i=1}^l a_i \al_i=\theta$ is the highest root and $\al_i$ are the simple roots of $\Phi$.   Let $\langle \cdot, \cdot \rangle: X^*(\ul{T}) \times X_*(\ul{T}) \rightarrow \mathbb Z$ denote the canonical pairing.

We define $\widehat{G}:=\ul{\widehat{G}}(\mathbb C)$ and, similarly, 
define $\widehat{M}$ and $\widehat{T}$.   Having fixed the inclusions $\ul{G} \supset \ul{M} \supset \ul{T}$, we can arrange that we also have $\widehat{G} \supset \widehat{M} \supset \widehat{T}$.  We will identify $X^*(\widehat{T})$ with $X_*(T)$ and roots of $(\widehat{G},\widehat{T})$ with coroots of $(G,T)$. 

Let $V^{\widehat{G}}_\mu$ denote the irreducible representation of $\widehat{G}$ having highest weight $\mu$. Let 
$\Omega(\mu)$ denote the set of $\widehat{T}$-weights in $V^{\widehat{G}}_\mu$, i.e., the intersection of the convex hull of $W\cdot \mu$ with the character lattice of $\widehat{T}$.    We shall also think of $\Omega(\mu)$ as consisting of certain cocharacters of $\ul{T}$.

For $\mu, \lambda, \alpha, \beta, \gamma \in X^*(\widehat{T})$, define
\begin{align}
r_\mu(\lambda) &= {\rm dim}~ {\rm Hom}_{\widehat{M}}(V^{\widehat{M}}_\lambda,\, V^{\widehat{G}}_\mu) \\
n_{\alpha, \beta}(\ga) &= {\rm dim} ~ {\rm Hom}_{\widehat{G}}(V^{\widehat{G}}_\alpha \otimes 
V^{\widehat G}_\beta, \, V^{\widehat G}_\ga).
\end{align}
Let $\mathcal R(\widehat{G})$ denote the representation ring of $\widehat{G}$.  The numbers $n_{\alpha, \beta}(\gamma)$ are the structure constants for $\mathcal R(\widehat{G})$, relative to the basis of highest weight representations $\{ V^{\widehat{G}}_\alpha \}$.  Similarly, the $r_\mu(\lambda)$ are the structure constants for the restriction homomorphism $\mathcal R(\widehat{G}) \rightarrow \mathcal R(\widehat{M})$.

\medskip

Let $\mathbb F_q$ denote the finite field with $q = p^n$ elements (for a prime $p$), let $k$ denote the algebraic closure 
$\bar{\mathbb F}_p=\bar{\mathbb F}_q$. Define the local function fields $L = k(\!(t)\!)$ and $L_q = \mathbb F_q(\!(t)\!)$ and their rings of integers $\mathcal O = k[\![t]\!]$ and $\mathcal O_q = \mathbb F_q[\![t]\!]$.

Let $G:= \ul{G}(L)$ and $G_q := \ul{G}(L_q)$, and similarly, we define $B, M, N, P, T, U$ and $B_q, M_q$, etc.  (Note that in what follows, we will often abuse notation and write $G, M, B$, etc., instead of $G_q, M_q, B_q$, 
etc. (resp. $\ul{G}, \ul{M}, \ul{B}$, etc.), letting context dictate what is meant.)  

Set $K := \ul{G}(\mathcal O)$ and $K_q := \ul{G}(\mathcal O_q)$.  These are maximal bounded subgroups of $G = \ul{G}(L)$ resp. $G_q := \ul{G}(L_q)$.  Set $K_M:=K\cap M$, $K_{M,q} = K_M \cap M_q$, and $K_P := N \cdot K_M$.

Let ${\mathcal H}_G = C_c(K_q \backslash G_q/K_q)$ and ${\mathcal H}_M = C_c(K_{M,q} \backslash M_q/K_{M,q})$
denote the spherical Hecke algebras of $G_q$ and $M_q$ respectively (they depend on $q$, but we will suppress this in our notation $\mathcal H_G$).  Convolution is defined using the Haar measures giving $K_q$ respectively
$K_{M,q}$ volume 1.  For the parabolic subgroup $P = MN$ of $G$, the constant term
homomorphism $c^G_M: {\mathcal H}_G \rightarrow {\mathcal H}_M$ is defined by the formula
$$ 
c^G_M(f)(m) = \delta_P(m)^{-1/2} \int_{N_q} f(nm) dn,
$$
for $m \in M_q$.  
Here, the Haar
measure on $N_q$ is such that $N_q \cap K$ has volume 1.  Further, letting $|\cdot|$ denote the normalized absolute value on $L_q$, we have $\delta_P(m) := |{\rm det}({\rm Ad}(m); {\rm Lie}(N))|$.  We define in a similar
way $\delta_B$,
$\delta_{B_M}$, $c^G_T$, and $c^M_T$.  If $U_M := U \cap M$, then we have $U = U_M \, N$, and so
$$
\delta_B(t) = \delta_P(t) \delta_{B_M}(t)
$$ for $t \in T_q$, and
$$
c^G_T(f)(t) = (c^M_T \circ c^G_M)(f)(t).
$$

The map $c^G_T$ (resp. $c^M_T$) is the Satake isomorphism $S^G$ for $G$ (resp.
$S^M$ for $M$).  Thus, the following diagram commutes:
\begin{equation} \label{ct_diag}
\xymatrix{ {\mathcal R}(\widehat{G})    \ar[r]^{\cong} \ar[d]_{rest.} & \mathbb C[X_*(T)]^W
\ar[d]_{incl.} & {\mathcal H}_G \ar[l]_{\,\,\,\,\,\,\,\,\,\, S^G}
\ar[d]_{c^G_M} \\ {\mathcal R}(\widehat{M}) \ar[r]^{\cong} & \mathbb C[X_*(T)]^{W_M} & {\mathcal H}_M
\ar[l]_{\,\,\,\,\,\,\,\,\,\, S^M}. }
\end{equation}

Given a cocharacter $\la \in X_*(\ul{T})$, we set $t^\la:=\la(t)$, where $t\in L$ is the variable.  
For a $G$-dominant coweight $\mu$, let $f^G_\mu = {\rm char}(K_q t^\mu K_q)$, the characteristic function of
the coset $K_q t^\mu K_q$.  Let $f^M_\lambda$ have the analogous meaning. When convenient, we will omit the symbols $G$ and $M$ in the notation for $f^G_\mu, f^M_\mu$. For $G$-dominant coweights $\alpha, \beta, \gamma$ define the {\em structure
constants} for the algebra ${\mathcal H}_{G}$ by
$$
f_\alpha * f_\beta =\sum_{\gamma} m_{\alpha, \beta}(\gamma) f_\gamma.
$$
Note that the $m_{\alpha,\beta}(\gamma)$ are functions of the parameter $q$, however we will suppress this dependence.

For $G$-dominant $\mu$ and $M$-dominant $\lambda$, we define $c_\mu(\la)$ by 
$$
c^G_M(f_\mu^G)=\sum_{\la} c_\mu(\la) f_\la^M
$$
Like the $m_{\alpha, \beta}(\gamma)$, the numbers $c_\mu(\la)$ depend on $q$, but we will suppress this.

\subsection{Definition of based (ideal) triangles in buildings}

Let $\B=\B_G$ denote the Bruhat-Tits building of $G$. This is a Euclidean building.  It is not locally finite, because $L$ has infinite residue field; however this will cause us no problems.
This building has a distinguished special point $o$ fixed by $K$.  We consider it as the ``origin'' in the base apartment $\A$ corresponding to $\ul{T}$.  Later on, we shall need to consider also the base alcove ${\bf a}$ in $\A$: it is the unique alcove of $\A$ whose closure contains $o$ and which is contained in the dominant Weyl chamber $\De_G$.

In what follows, we will sometimes write $\De$ in place of $\De_G$.  Recall that the $\De$-distance $d_\De(x,y)$ in $\B$ is defined as follows. Given $x, y\in \B$, find an apartment $\A'\subset  \B$ containing $x, y$. 
Identify $\A'$ with the model apartment $\A$ using an isomorphism $\A' \to \A$.   
Then project the vector $\ov{xy}$ in $\A$ to a vector $\ov{\la}$ the positive chamber $\De_G\subset \A$, so that $x$ corresponds to the origin $o$, the tip of $\De_G$. Then 
$$
d_\De(x,y):=\la.$$
Thus, $d_\De(x,y)=d_\De(y,x)^*$. Given a coweight $\la\in \De \cap \Lambda$ and $t^\la\in T$, we let $x_\la:= t^\la \cdot o$, a point in $\B$. Then $d_\De(o,x_\la)= W\cdot \la\cap \De$. 
For $x\in \B$ and $\la\in \De$ we define the $\la$-sphere $S_\la(x)=\{y\in \B: d_\De(x,y)=\la\}$. In the case 
when $x=o$ and $\la\in \De \cap \La$, we have
$$
S_\la(o)=K\cdot x_\la. 
$$

\begin{defn}
Given $\al, \be, \ga\in \De \cap \Lambda$ define the space of {\em based ``disoriented'' triangles} ${\mathcal T}(\al, \be; \ga)$ to be the set of triangles $[o,y;x_\ga]$ 
with vertices $o, y, x_\ga$, so that 
$$
d_\De(o, y)=\al, \, d_\De(y, x_\ga)=\be.
$$
Note that only the point $y$ is varying.
\end{defn}
Observe that ${\mathcal T}(\al,\be;\ga)$ can be identified with the subset of the usual set of oriented triangles $\mathcal T(\al,\be,\ga^*)$ whose final edge is $\ov{x_\ga o}$.  Also, it is easy to see that
$$
\mathcal T(\alpha,\beta;\gamma) = Kx_\alpha \cap t^\gamma Kx_{\beta^*}
$$
under the identification given by the map $[o,y;x_\ga]\to y$. 

\medskip

We need to define a variant of the distance function $d_{\De}(-,-)$, where one of the points is ``at infinity'' in a particular sense we will presently describe.  We let $\ol{xy}$ denote the unique geodesic segment in $\B$ connecting 
$x$ to $y$. We will always assume that such segments (and all geodesic rays in $\B$) are parameterized by arc-length. We let $\D_{Tits}\B$ denote the Tits boundary of $\B$, which is a spherical 
building. The points of $\tits{\B}$ could be defined as equivalence classes of geodesic rays in $\B$: 
two rays are equivalent if they are {\em asymptotic}, i.e., are within bounded distance from each other. A ray in $\B$ is denoted $\ol{x\xi}$ where $x$ is its initial point and $\xi\in \tits \B$ represents the corresponding point in $\tits\B$.  
 
One says that two rays $\ga_1(t), \ga_2(t)$ in $\B$ are {\em strongly asymptotic} if $\ga_1(t)=\ga_2(t)$ for all sufficiently large $t$. 

Each parabolic subgroup $P$ of $G$ fixes a certain cell in $\D_{Tits}\B$. In what follows, we 
will pick a generic point $\xi$ in that cell. Then $P$ is the stabilizer of $\xi$ in $G$. 


Now assume that $M$ is a Levi factor of the parabolic $P$ corresponding to $\xi$.  
By analogy with the definition of Busemann functions in metric geometry, we will define vector-valued Busemann functions 
(normalized at $o$) 
$$
b_{\xi,\De_M}: \B_G \to \De_M. 
$$
We refer the reader to Section \ref{geo} for the precise definition. Intuitively, $b_{\xi,\De_M}(y)$ measures the  
$\De_M$-distance from $\xi$ to $y$ relative to the $\De_M$-distance from $\xi$ to $o$. A fundamental property (to be proved in Lemma \ref{Busemann_comp_lem}) is that
\begin{equation} \label{Busemann_alg_char}
b_{\xi,\De_M}(y)=\la \iff y\in K_P x_\la.
\end{equation}
This gives an algebraic characterization of the function $b_{\xi,\De_M}$, and also shows that it agrees with the retraction $\rho_{K_P,\De_M}$ which we define and study in subsection \ref{rho_K_P_Delta_M}. 

We can now define the space of based ideal triangles. 

\begin{defn}
Fix coweights $\la\in \De_M, \mu\in \De_G$ and a generic point $\xi$ in the face of $\D_{Tits} \B$ fixed by $P$. Then we define the set of {\em based ideal triangles} $\I\T(\la,\mu; \xi)$ to consist 
of the triples $o, y, \xi$, where 
$$
d_{\De}(o,y)=\mu, \,\, b_{\xi,\De_M}(y)=\la.
$$
Note that once again, only $y$ is varying.
\end{defn}
In other words, in view of (\ref{Busemann_alg_char}), we have the purely algebraic characterization (proved in Corollary \ref{IT_comp_cor})
$$
\I\T(\la,\mu; \xi)= S_{\mu}(o) \cap K_P \cdot x_\la. 
 $$

\subsection{Affine Grassmannians and algebraic structure of (ideal) triangle spaces}

We need to endow ${\mathcal T}(\alpha,\beta;\ga)$ and $\mathcal{IT}(\lambda,\mu;\xi)$ with the structure of algebraic varieties defined over $\mathbb F_p$.  To do so we will realize them as subsets of the affine Grassmannian.

The {\em affine Grassmannian} ${\rm Gr}^G := G/K$ will be considered as the $k$-points of an ind-scheme defined over 
$\mathbb F_p$.  We can identify this with the orbit $G\cdot o\subset \B_G$. (If $\ul{G}$ is semisimple then 
$G\cdot o$ is contained in the vertex set of $\B_G$, in general it is a subset of the skeleton of the smallest dimension in 
the polysimplicial complex  $\B_G$.) 

For any $G$-dominant cocharacter $\mu$, let $x_\mu = t^\mu K/K$, a point in ${\rm Gr}^G$.  It is well-known that the closure $\overline{Kx_\mu}$ of the $K$-orbit $Kx_\mu = S_\mu(o)$ in the affine Grassmannian is the union
$$
\overline{S_\mu(o)} = \coprod_{\mu_0 \preceq \mu} S_{\mu_0}(o).
$$
Here $\mu_0$ ranges over $G$-dominant cocharacters in $X_*(T)$, and the relation $\mu_0 \preceq \mu$ means, by definition, that $\mu - \mu_0$ is a sum of positive coroots.  

Each $\overline{S_\mu(o)}$ (resp. $S_\mu(o)$) is a projective (resp. quasi-projective) variety of dimension $\langle 2\rho, \mu \rangle$, defined over $\mathbb F_p$.  Therefore ${\rm Gr}^G$, the union of the projective varieties $\overline{S_\mu(o)}$, is an ind-scheme defined over $\mathbb F_p$.

Now, $K$ is the set of $k$-points in a group scheme defined over $\mathbb F_p$ (namely, 
the positive loop group $L^{\geq 0}(\ul{G})$) which acts (on the left) on ${\rm Gr}^G$ in an obvious way.  
The orbits $Kx_\mu$ are automatically locally-closed in the (Zariski) topology on ${\rm Gr}^G$, and are defined over 
$\mathbb F_p$.

Moreover, the group $K_P = N K_M$ we defined earlier is the $k$-points of an ind-group-scheme defined over $\mathbb F_p$ which also acts on ${\rm Gr}^G$.  The orbit spaces $K_P x_\lambda$ are neither finite-dimensional nor finite-codimensional in general, however, since they are orbits under an ind-group, they are still automatically locally closed in ${\rm Gr}^G$.

By the above discussion, our spaces of triangles can be viewed as intersections of orbits inside ${\rm Gr}^G$
\begin{align*}
\mathcal T(\al,\be;\ga) &= Kx_\alpha \cap t^\ga Kx_{\beta} \\
{\mathcal IT}(\lambda,\mu;\xi) &= K_Px_\lambda \cap Kx_\mu
\end{align*}
and as such each inherits the structure of a finite-dimensional, locally-closed subvariety defined over $\mathbb F_p$.  Thus, it makes sense to count $\mathbb F_q$-points on these varieties.

\begin{Remark}
The Bruhat-Tits building $\B_{G_q}$ for the group $G_q$ isometrically embeds in $\B_G$ as a sub-building.  It is the fixed-point set for the natural action of the Galois group ${\rm Gal}(k/\mathbb F_q)$ on $\B_{G}$. The orbit $G_q \cdot o \subset \B_{G_q}$ can be identified with $G_q/K_q$ and thus with the set of $\mathbb F_q$-points in ${\rm Gr}^{G}$. Accordingly, the sets of 
$\mathbb F_q$-points in $T(\al,\be;\ga)$ and  ${\mathcal IT}(\lambda,\mu;\xi)$ then become spaces of based triangles and based ideal triangles in $\B_{G_q}$. Then ``counting'' the numbers of triangles in $\B_{G_q}$ computes structure constants for ${\mathcal H}_G$ and (up to a factor) the constant term map $c^G_M$. On the other hand, algebro-geometric considerations are more suitable for the varieties of triangles in ${\rm Gr}^{G}\subset \B_G$, since the field $k$ is algebraically closed. Therefore, in this paper (unlike \cite{KLM3}), we almost exclusively work with the building $\B_G$ rather than $\B_{G_q}$. 
\end{Remark}

\section{Statements of results}  \label{precise} 

We fix cocharacters $\mu \in \De_G$ and $\lambda \in \De_M$.  In order to state our results we need the following definition.  Recall that $\langle \cdot, \cdot \rangle: X^*(\ul{T}) \times X_*(\ul{T}) \rightarrow \mathbb Z$ denotes the canonical pairing.

\begin{defn}  Suppose $\mu, \nu \in X_*(\ul{T})$.  We write $\nu \geq^P \mu$ if
\begin{itemize}
\item $\langle \alpha, \nu \rangle = 0$ for all roots $\alpha$ appearing in ${\rm Lie}(\ul{M})$;
\item $\langle \alpha, \nu + \lambda \rangle \geq 0$ for all $\lambda \in \Omega(\mu)$ and $\alpha \in \Phi_N$.  
\end{itemize}
\end{defn}

Note that this relation satisfies a semigroup property: if $\nu_1 \geq^P \mu_1$ and $\nu_2 \geq^P \mu_2$, then $\nu_1 + \nu_2 \geq^P \mu_1 + \mu_2$.  It is also homogeneous: for every integer $z\geq 1$, we have $\nu \geq^P \mu \Longleftrightarrow z\nu \geq^P z\mu$. 
\medskip

\begin{theorem}\label{key}
  Let $\mu,\lambda$ be as above.  Then for any cocharacter $\nu$ with $\nu \geq^P \mu$, we have an equality of subvarieties in ${\rm Gr}^G$
$$\T(\nu+\lambda, \mu^*;\nu) = t^\nu\Big(\I\T(\lambda, \mu;\xi) \Big).$$
In particular, the varieties $\T(\nu+\lambda, \mu^*;\nu)$ and $\I\T(\lambda, \mu;\xi)$ are naturally isomorphic as $\mathbb F_p$-varieties.
\end{theorem}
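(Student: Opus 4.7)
My plan is to reduce the theorem to a single retraction identity on the sphere $S_\mu(o)$. Using the descriptions $\T(\alpha,\beta;\gamma) = Kx_\alpha \cap t^\gamma Kx_{\beta^*}$ and $\I\T(\la,\mu;\xi) = K_Px_\la \cap Kx_\mu$ from Section~\ref{general}, both sides of the theorem share the common factor $t^\nu Kx_\mu$ (using $(\mu^*)^* = \mu$). Writing $y = t^\nu z$ with $z \in Kx_\mu$, the theorem amounts to the equivalence
$$
z \in K_P x_\la \iff t^\nu z \in Kx_{\nu+\la}, \qquad z \in S_\mu(o).
$$
By (\ref{Busemann_alg_char}) the left-hand condition reads $b_{\xi,\De_M}(z) = \la$; since $t^{-\nu}$ is a $G$-equivariant isometry of $\B$, the right-hand condition reads $d_\De(x_{-\nu},z) = \nu+\la$. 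Thus it suffices to prove, for $z \in S_\mu(o)$, that $b_{\xi,\De_M}(z) = \la$ if and only if $d_\De(x_{-\nu},z) = \nu+\la$.

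Next, I would introduce the retraction $\rho_{-\nu,\De_G-\nu}$ of $\B$ onto $\A$ based at the Weyl chamber $\De_G-\nu$ with apex $x_{-\nu}$. For any $z \in \B$ the vector $\rho_{-\nu,\De_G-\nu}(z) - x_{-\nu}$ lies in $\A$, and $d_\De(x_{-\nu},z)$ is its $W$-conjugate inside $\De_G$. The hypothesis $\nu\geq^P\mu$ guarantees that $\nu+\la' \in \De_G$ for every $\la' \in \Omega(\mu)\cap\De_M$: the first bullet gives $\langle\al,\nu+\la'\rangle = \langle\al,\la'\rangle \geq 0$ for positive $\al\in\Phi_M$ (because $\la'$ is $M$-dominant), while the second gives $\langle\al,\nu+\la'\rangle \geq 0$ for $\al\in\Phi_N$. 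Combined with the identity $\rho_{K_P,\De_M} = b_{\xi,\De_M}$ noted in the excerpt, the sought equivalence is implied by the retraction identity
$$
\rho_{-\nu,\De_G-\nu}(z) \;=\; \rho_{K_P,\De_M}(z) \qquad \text{for every } z \in S_\mu(o).
$$

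This last identity is the hard step, and is exactly what the excerpt flags for the Appendix. My approach to it is geometric: compare the two retractions along the geodesic segment $\overline{oz}$, which has $\De_G$-length $\mu$. The retraction $\rho_{K_P,\De_M} = b_{\xi,\De_M}$ folds $\overline{oz}$ using only $W_M$-reflections, since $\xi$ lies in the face of $\tits\B$ fixed by $P$. The retraction $\rho_{-\nu,\De_G-\nu}$ folds along walls bounding the chamber $\De_G-\nu$. The vanishing $\langle\al,\nu\rangle=0$ for $\al\in\Phi_M$ places $x_{-\nu}$ on every $\Phi_M$-wall through $o$, so from $x_{-\nu}$ these walls look exactly like the $W_M$-walls used by the Busemann retraction; and the inequalities $\langle\al,\nu+\la'\rangle \geq 0$ for $\al\in\Phi_N$ and $\la'\in\Omega(\mu)$ imply that $\overline{oz}$ never crosses any $\Phi_N$-wall through $x_{-\nu}$. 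Hence no $\Phi_N$-wall contributes to either folding, and the remaining $\Phi_M$-foldings match, so the two folding processes coincide on $\overline{oz}$, and in particular agree at $z$. Making this geometric picture rigorous is the crux of the whole argument.

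Finally, for the scheme structure: both $\T(\nu+\la,\mu^*;\nu)$ and $t^\nu\I\T(\la,\mu;\xi)$ are locally closed reduced $\mathbb F_p$-subvarieties of ${\rm Gr}^G$ (the latter because translation by $t^\nu$ is an $\mathbb F_p$-automorphism of the ind-scheme ${\rm Gr}^G$). The set-theoretic equality established above thus upgrades to equality of $\mathbb F_p$-subvarieties, giving the natural isomorphism $\T(\nu+\la,\mu^*;\nu) \cong \I\T(\la,\mu;\xi)$.
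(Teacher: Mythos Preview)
Your reduction is exactly the paper's: rewrite the theorem as $(t^{-\nu}Kt^\nu)x_\lambda \cap Kx_\mu = K_P x_\lambda \cap Kx_\mu$ and observe (via Lemmas \ref{preimage1}, \ref{preimage2}) that this is equivalent to the retraction identity $\rho_{-\nu,\De_G-\nu}|_{Kx_\mu} = \rho_{K_P,\De_M}|_{Kx_\mu}$ (Proposition \ref{key_prop}). Your observation that $\nu \geq^P \mu$ forces $\nu+\la' \in \De_G$ for every $\la' \in \Omega(\mu)\cap\De_M$ is exactly Lemma \ref{image}(a), and the scheme-theoretic conclusion is correct.

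The gap is in your sketch of the retraction identity itself, which you candidly flag. Two of your heuristics are not accurate as stated and would mislead a rigorous attempt. First, the statement that $\rho_{K_P,\De_M}$ ``folds $\overline{oz}$ using only $W_M$-reflections'' conflates two stages: both retractions factor as (retraction to $\A$ based at some alcove) followed by (projection to the chamber via $W_M$ or $W_{-\nu}$). The first stage can and does involve reflections along \emph{arbitrary} affine walls, including $\Phi_N$-walls; the difference between the two retractions lies in the choice of base alcove, and the real work is to show these first-stage retractions to $\A$ differ only by an element of $W_M$ (this is condition (c) of Lemma \ref{equiv2}). Second, ``$\overline{oz}$ never crosses any $\Phi_N$-wall through $x_{-\nu}$'' is not meaningful for a geodesic in the building; the correct statement concerns walls met by the \emph{retracted} path in $\A$, and one needs to know this path stays inside ${\rm Conv}(W x_\mu)$ to invoke $\nu \geq^P \mu$ --- establishing that containment (Proposition \ref{key1} in the Appendix) is itself nontrivial.

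The paper gives two full proofs of the retraction identity. The main text (Propositions \ref{better_prop}, \ref{alcove_prop}) uses a gallery induction: for $\nu_1,\nu_2 \geq^P \mu$ and base alcoves ${\bf b}_i = w_0{\bf a}-\nu_i$, show $\rho_{{\bf b}_1,\A}=\rho_{{\bf b}_2,\A}$ on $\mu$-admissible alcoves by analyzing one wall-crossing at a time; the key point is that ${\bf b}_1,{\bf b}_2$ always lie on the same side of any wall meeting ${\rm Conv}(W\mu)$, and a $J_P$-positivity argument handles the ambiguous ``Position 2'' case. The Appendix proof is closer in spirit to your sketch: it shows $\rho_\xi(\overline{oz})$ is a Hecke path contained in $C_\mu$, then builds for each $x\in\overline{oz}$ an apartment containing $x$, the alcove ${\bf a}_{-\nu}$, and $\xi$ simultaneously, which forces the two retractions to coincide at $x$. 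Either route requires substantially more than the folding heuristic you outline.
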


 For the next results, recall that $k_\Phi= {\rm lcm}(a_1,...,a_l)$, where $\sum_{i=1}^l a_i \al_i=\theta$ is the highest root and $\al_i$ are the simple roots of $\Phi$.

\begin{theorem}\label{main}
For $\la\in \De_M,\, \mu\in \De_G$ as above and for any $\nu$ with $\nu \geq^P \mu$, set $\al:=\nu+\la,\, \be:=\mu^*, \, \ga:=\nu$. Then: 

(i) (First column of Table \ref{table}) 
$$
c_\mu(\lambda) q^{\<\rho_N,\la\>} |K_{M,q}\cdot x_\la|= m_{\al,\be}(\ga). 
$$ 

(ii) (Second column) $r_\mu(\lambda) = n_{\al,\be}(\ga)=n_{\nu, \mu}(\nu + \lambda)$. 

(iii) (First row) $$r_\mu(\la)\ne 0 \RA c_\mu(\la)\ne 0 \RA r_{k_\Phi \mu}(k_\Phi \la)\ne 0.$$ 
\end{theorem}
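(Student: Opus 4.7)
The plan is to deduce Theorem \ref{main} from Theorem \ref{key} together with (a) geometric interpretations of the four structure constants in Table \ref{table}, and (b) the saturation theorem for tensor-product multiplicities.

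For parts (i) and (ii), I would first establish the following geometric descriptions (as is presumably carried out in sections \ref{geom_int_sec} and \ref{bounds_sec}). On the point-count side: a direct unfolding of the convolution integral for $f_\al*f_\be$ evaluated at $t^\ga$ yields $m_{\al,\be}(\ga)=|\T(\al,\be;\ga)(\FF_q)|$, while unfolding the defining integral for $c^G_M(f_\mu)(t^\la)$, together with the modulus factor $\delta_P^{-1/2}$ and the normalization of Haar measure on $N_q$, yields $|\I\T(\la,\mu;\xi)(\FF_q)| = c_\mu(\la)\,q^{\<\rho_N,\la\>}\,|K_{M,q}\cdot x_\la|$. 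On the component-count side: the Mirkovi\'c--Vilonen theorem, together with its variant for the retractions $\rho_{K_P,\De_M}$ developed in sections \ref{retractions_sec}--\ref{sharp_sec}, identifies the number of top-dimensional irreducible components of $\T(\al,\be;\ga)$ and $\I\T(\la,\mu;\xi)$ with $n_{\al,\be}(\ga)$ and $r_\mu(\la)$ respectively. Feeding these interpretations into the isomorphism of $\FF_p$-varieties provided by Theorem \ref{key} gives (i) and the first equality of (ii); the second equality $n_{\nu+\la,\mu^*}(\nu)=n_{\nu,\mu}(\nu+\la)$ is the standard cyclic/dual symmetry of ${\rm Hom}$-multiplicities in a tensor triple product.

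For part (iii), I would combine two classical positivity inputs with the homogeneity of the relation $\geq^P$. For the implication $r_\mu(\la)\ne 0 \RA c_\mu(\la)\ne 0$: by (ii), $r_\mu(\la)\ne 0$ forces $n_{\al,\be}(\ga)\ne 0$, so $V^{\widehat G}_\ga$ appears in $V^{\widehat G}_\al\otimes V^{\widehat G}_\be$; via geometric Satake (or directly from $\T(\al,\be;\ga)$ being nonempty as a consequence of the MV cycle description), this implies $m_{\al,\be}(\ga)\ne 0$, whence $c_\mu(\la)\ne 0$ by (i). For the implication $c_\mu(\la)\ne 0 \RA r_{k_\Phi\mu}(k_\Phi\la)\ne 0$: by (i), $c_\mu(\la)\ne 0$ yields $m_{\al,\be}(\ga)\ne 0$; the Kapovich--Millson saturation theorem then gives $n_{k_\Phi\al,k_\Phi\be}(k_\Phi\ga)\ne 0$. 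Since $\geq^P$ is homogeneous, $k_\Phi\nu\geq^P k_\Phi\mu$, so (ii) applied to the scaled triple $(k_\Phi\mu,k_\Phi\la,k_\Phi\nu)$ yields $r_{k_\Phi\mu}(k_\Phi\la)=n_{k_\Phi\nu,k_\Phi\mu}(k_\Phi\nu+k_\Phi\la)=n_{k_\Phi\al,k_\Phi\be}(k_\Phi\ga)\ne 0$.

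The principal difficulty is concentrated entirely in Theorem \ref{key} and in the geometric interpretation of $c_\mu(\la)$, where careful accounting of the modulus character and of the $K_{M,q}$-stabilizer structure is required. Granted those inputs, the content of Theorem \ref{main} is essentially combinatorial: it consists of packaging the four entries of Table \ref{table} in a way that lets the homogeneity of $\geq^P$ convert a saturation statement for the bottom row (holding for $m,n$) into a saturation statement for the top row (for $c,r$).
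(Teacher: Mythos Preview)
Your outline matches the paper's proof essentially step for step: parts (i) and (ii) follow from Theorem \ref{key} together with the point-count interpretations (Lemma \ref{str_const}, Corollary \ref{c:count with P}) and the component-count interpretations (Propositions \ref{T-count} and \ref{bound 2}); the symmetry $n_{\nu+\la,\mu^*}(\nu)=n_{\nu,\mu}(\nu+\la)$ is invoked exactly as you say; and part (iii) is reduced via (i), (ii), and the homogeneity of $\geq^P$ to the known implications between $m_{\al,\be}(\ga)$ and $n_{\al,\be}(\ga)$.

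One step in your sketch of (iii) is looser than the paper's and worth tightening. From $n_{\al,\be}(\ga)\ne 0$ you conclude $\T(\al,\be;\ga)$ is nonempty and then assert $m_{\al,\be}(\ga)\ne 0$ ``via geometric Satake (or directly from $\T$ being nonempty)''. Nonemptiness over $\bar{\mathbb F}_p$ only gives $|\T(\al,\be;\ga)(\mathbb F_q)|\ne 0$ for all sufficiently large $q$, not for the fixed $q$ defining $\mathcal H_G$; and geometric Satake by itself does not furnish this positivity. The paper closes the gap by invoking the Hecke path model (\cite{KLM3}, Theorem~8.18), which shows that once $\T(\al,\be;\ga)$ is nonempty it has $\mathbb F_q$-points for \emph{every} $q$. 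You should cite that result (or an equivalent positivity statement) rather than appeal to geometric Satake at this point.
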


Assume now that $\mu-\la$ (or, equivalently, $\la + \mu^*$) belongs to the coroot lattice 
of $\ul{G}$.     
  
\begin{cor}\label{corollary:main}
i. (Semigroup property for $r$)
The set of $(\mu,\lambda)$ for which $r_\mu(\la)\neq 0$ is a semigroup.

\medskip

ii. (Uniform Saturation for $c$) 
$$c_{N\mu}(N\la)\ne 0 \hbox{~~for some~~} N\ne 0 \quad \RA c_{k_{\Phi}\mu}(k_\Phi \la)\ne 0.$$  


\medskip 
iii. (Uniform Saturation for $r$) 
$$r_{N\mu}(N\la)\ne 0 \hbox{~~for some~~} N\ne 0 \quad \RA r_{k_\Phi^2\mu}(k_\Phi^2 \la)\ne 0.$$  
In particular, for $\ul{G}$ of type A, the set of $(\mu, \lambda)$  such that $r_{\mu}(\lambda) \neq 0$, is saturated.
\end{cor}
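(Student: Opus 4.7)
The plan is to deduce all three parts from Theorem~\ref{main} by transferring known saturation/semigroup properties of the structure constants $n_{\al,\be}(\ga)$ and $m_{\al,\be}(\ga)$ (for the representation ring and the Hecke ring of $\widehat{G}$) over to the constants $r_\mu(\la)$ and $c_\mu(\la)$. The key bookkeeping device will be the homogeneity and semigroup properties of the relation $\nu\ge^P\mu$ noted right after the definition; these ensure that an auxiliary $\nu$ chosen once for $(\mu,\la)$ scales and adds correctly.

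For (i), I would pick, for each pair $(\mu_i,\la_i)$ with $r_{\mu_i}(\la_i)\neq 0$, some $\nu_i\ge^P\mu_i$. Theorem~\ref{main}(ii) converts nonvanishing of $r_{\mu_i}(\la_i)$ into nonvanishing of the tensor product multiplicity $n_{\nu_i,\mu_i}(\nu_i+\la_i)$. The semigroup property of $\ge^P$ gives $\nu_1+\nu_2\ge^P\mu_1+\mu_2$, and the classical fact that the set of triples $(\al,\be;\ga)$ with $n_{\al,\be}(\ga)\neq 0$ forms a semigroup (saturated tensor product cone) then yields $n_{\nu_1+\nu_2,\mu_1+\mu_2}(\nu_1+\nu_2+\la_1+\la_2)\neq 0$. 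Applying Theorem~\ref{main}(ii) in reverse gives $r_{\mu_1+\mu_2}(\la_1+\la_2)\neq 0$.

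For (ii), assume $c_{N\mu}(N\la)\neq 0$ for some $N\ge 1$, with $\mu-\la$ in the coroot lattice. Fix some $\nu_0\ge^P\mu$; by homogeneity, $N\nu_0\ge^P N\mu$, so Theorem~\ref{main}(i) converts the hypothesis into $m_{N(\nu_0+\la),\,N\mu^*}(N\nu_0)\neq 0$. Since $\mu^*+\la$ lies in the coroot lattice (by the standing hypothesis), the hypotheses of the saturation theorem for $m$ (Kapovich--Leeb--Millson, saturation factor $k_\Phi$) are satisfied, so $m_{k_\Phi(\nu_0+\la),\,k_\Phi\mu^*}(k_\Phi\nu_0)\neq 0$. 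Using again that $k_\Phi\nu_0\ge^P k_\Phi\mu$ by homogeneity, Theorem~\ref{main}(i) translates this back to $c_{k_\Phi\mu}(k_\Phi\la)\neq 0$. For (iii), I would chain together Theorem~\ref{main}(iii) and part (ii): from $r_{N\mu}(N\la)\neq 0$ Theorem~\ref{main}(iii) gives $c_{N\mu}(N\la)\neq 0$; part (ii) then yields $c_{k_\Phi\mu}(k_\Phi\la)\neq 0$; and a second application of Theorem~\ref{main}(iii) (with $(\mu,\la)$ replaced by $(k_\Phi\mu,k_\Phi\la)$) gives $r_{k_\Phi^2\mu}(k_\Phi^2\la)\neq 0$. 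For type $A_n$ the highest root has all simple-root coefficients equal to $1$, hence $k_\Phi=1$, and the saturation statement for $r$ becomes literal saturation.

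The only nontrivial checks are routine: verifying that the coroot-lattice hypothesis $\mu-\la\in Q(\Phi^\vee)$ persists after the substitution $(\al,\be,\ga)=(\nu+\la,\mu^*,\nu)$ so that the saturation theorem for $m$ applies (it does, since $\al+\be-\ga=\la+\mu^*$), and confirming that the auxiliary $\nu$ can always be chosen so that the $\mathbb{F}_p$-geometry of Theorem~\ref{main} is available. I expect no real obstacle here; the whole argument is a formal transfer, with the homogeneity of $\ge^P$ doing all the work of compatibly rescaling the auxiliary parameter.
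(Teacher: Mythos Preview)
Your argument for parts (i) and (ii) is correct and matches the paper's proof essentially line for line: choose auxiliary $\nu_i\ge^P\mu_i$, convert via Theorem~\ref{main} to the structure constants $n_{\al,\be}(\ga)$ or $m_{\al,\be}(\ga)$, invoke the known semigroup/saturation property for those, and convert back using the semigroup and homogeneity properties of $\ge^P$. Your check that $\al+\be-\ga=\la+\mu^*\in Q(\Phi^\vee)$ is exactly what the paper does.

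For part (iii) you take a genuinely different route. The paper transfers directly via Theorem~\ref{main}(ii) to the constants $n_{\al,\be}(\ga)$ and invokes the $k_\Phi^2$-saturation theorem for $n$ proved in \cite{KM2}. You instead chain the two implications of Theorem~\ref{main}(iii) together with part (ii): $r_{N\mu}(N\la)\ne 0\Rightarrow c_{N\mu}(N\la)\ne 0\Rightarrow c_{k_\Phi\mu}(k_\Phi\la)\ne 0\Rightarrow r_{k_\Phi^2\mu}(k_\Phi^2\la)\ne 0$. This is valid and has the advantage of being more self-contained: it uses only the $k_\Phi$-saturation for $m$ from \cite{KLM3} (via part (ii)) and the already-proved Theorem~\ref{main}(iii), rather than citing the separate $k_\Phi^2$-result for $n$ from \cite{KM2}. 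The paper's route is slightly more direct but requires that extra external input. Both approaches ultimately rest on the same circle of ideas, since the $k_\Phi^2$-saturation for $n$ in \cite{KM2} is itself proved by passing through $m$.
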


\begin{Remark}
One can improve (using results of \cite{KM1}, \cite{KKM}, \cite{BK}, \cite{S} on saturation for the structure constants for the 
representation rings ${\mathcal R}(\widehat{G})$) the constants $k_\Phi$ as follows: 

One can replace $k_\Phi$ (in ii.) and $k_\Phi^2$ (in iii.) by:

(a) $k=2$ for $\Phi$ of type $B, C, G_2$, 

(b) $k=1$ for $\Phi$ of type $D_4$,

(c)  $k=2$ for $\Phi$ of type $D_n, n\ge 6$. 

Conjecturally (see \cite{KM1}), one can use $k=1$ for all simply-laced root systems and $k=2$ for all non-simply-laced. 
\end{Remark}

\begin{Remark}  The paper \cite{Haines2} states saturation results and conjectures for the numbers $m_{\alpha, \beta}(\gamma)$ and $n_{\alpha, \beta}(\gamma)$, when $\alpha$ and $\beta$ are sums of $\ul{G}$-dominant minuscule cocharacters.  Suppose that $\mu$ is a sum of $\ul{G}$-dominant minuscule cocharacters.   Then we conjecture that the implications in ii. and iii. above hold, where $k_\Phi$ and $k_{\Phi}^2$ are replaced by $1$.
\end{Remark}

\section{Relation to Littelmann's path model} \label{Littelmann_sec}

There is a very short proof of Theorem \ref{main} (ii) using Littelmann's path models, and our discovery of this proof was one of the starting points for this project.  It gave rise to the notion of the inequality $\nu \geq^P \mu$ which plays a key role for us.  For this reason, we present this proof here.  A nice reference for the Littelmann path models used here is \cite{Li}.

We will prove the result in the following form: If $\nu \geq^P \mu$, then 
$r_\mu(\lambda) = n_{\nu, \mu}(\nu + \la)$. 

Fix $\mu \in \De_G \cap \Lambda$.  Write $\mathbb B_\mu$ for the set of all {\rm type $\mu$ LS-paths}, that is, the set of all paths in $\mathcal A$ which result by applying a finite sequence of ``raising'' resp. ``lowering'' operators $e_i$ resp. $f_i$ to the path $\ov{o\mu}$ (the straight-line path from the origin to $\mu$).  Note that each path in $\mathbb B_\mu$ starts at the origin $o$ and lies inside ${\rm Conv}(W_\mu)$, the convex hull of the Weyl group orbit of $\mu$.

For any $x \in \mathcal A$, we can consider $x + \mathbb B_\mu$, the set of all type $\mu$ paths originating at $x$.
Let $\mathbb B_\mu(x,y)$ denote the set of type $\mu$ paths which originate at $x$ and terminate at $y$.

Now consider $\lambda \in \De_M \cap \Lambda$.  The Littelmann path model for $r_\mu(\lambda)$ states that 
\begin{equation} \label{Lit_rest}
r_\mu(\lambda) = |\mathbb B_\mu(o,\lambda) \cap \De_M|,
\end{equation}
the cardinality of the set of type $\mu$ paths originating at $o$, terminating at $\lambda$, and contained entirely in $\De_M$.

Now consider any $\nu \in \De_G \cap \Lambda$ such that $\nu + \la \in \De_G$.  The Littelmann path model for $n_{\nu, \mu}(\nu + \lambda)$ states that 
\begin{equation} \label{Lit_tens}
n_{\nu, \mu}(\nu + \la) = |\mathbb B_\mu(\nu, \nu+\lambda) \cap \De_G|
\end{equation}
the cardinality of the set of type $\mu$ paths originating at $\nu$, and terminating at $\nu + \la$, and contained entirely inside $\De_G$.

Now assume $\nu \geq^P \mu$.  This is defined precisely so that we have
$$
\nu + (\mathbb B_{\mu}(o,\la) \cap \De_M) = \mathbb B_\mu(\nu, \nu+ \la) \cap \De_G.
$$
The equality of the quantities in (\ref{Lit_rest}) and (\ref{Lit_tens}) is now obvious. See figure below. \qed

\begin{figure}[h]\label{LS-paths}
\begin{displaymath}
\begin{xy}
(-40,0)*{}="A"; (0, 10)*{}="B"; (40, 0)*{}="C"; (20, 20)*{}="D"; (-20, 20)*{}="E"; (-60,0)*{}="AA"; (60, 0)*{}="CC";
"A"*{\bullet}; "B"; "C"*{\bullet}; "D"*{\bullet};  
{\ar "A";"C"};
{\ar "C"; "B"};
{\ar "B"; "D"};
{\ar "C"; "D"};
{\ar "A"; "D"};
{\ar @{-}  "A"; "E"}; {\ar @{-}  "AA"; "CC"};
(-44, 2)*{  x_{-\nu}};  (42, 2)*{  o}; (20, 23)*{  x_\la};
(32, 12)* { \la}; (0,-4)* {  \nu}; (-4,16)* {  \la+\nu}; (12,12)* {  p_\mu}; (-14, 22)* {  \De_G - \nu} ; (57, 3)*{  \De_M}
\end{xy}
\end{displaymath}
\caption{The broken path $p_\mu$ from $o$ to $x_\la$ is an LS path of type $\mu$. Then the Littelmann bigon with the sides $p_\mu$ and 
$\ol{o x_\la}$ yields a Littelmann triangle with the geodesic sides $\ol{x_{-\nu} o}$, $\ol{x_{-\nu} x_\la}$ and the broken side $p_\mu$.} 
\end{figure}
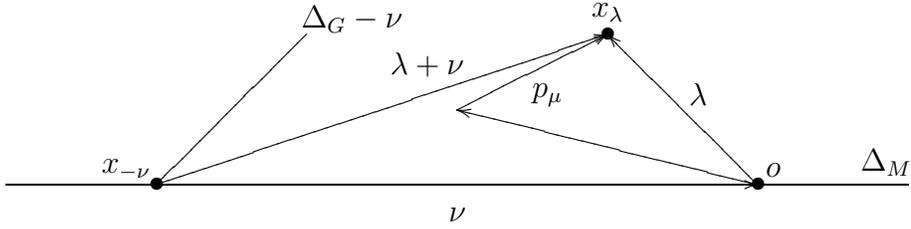

\section{Ideal triangles and vector-valued Busemann functions}\label{geo}
\subsection{Based ideal triangles with ideal side-length $\lambda$ and side-length $\mu$}

As promised earlier in this paper we now make precise the definition of vector-valued Busemann function.  
Recall that we fixed a parabolic subgroup $P\subset G$, with a Levi subgroup $M$ and a point $\xi\in \tits \B$ fixed by $P$, where $\B=\B_G$. Then we also have an embedding $\B_M\subset \B_G$, where $\B_M$ splits as the product 
$$
\B_M=F\times {\mathcal Y}_M
$$
where $F$ is a flat in $\B$ and ${\mathcal Y}_M$ is a Euclidean building containing no flat factors. Pick a geodesic $\gamma\subset F$ asymptotic to 
$\xi$, and oriented so that the subray $\ga(\R_+)$ is asymptotic to $\xi$. 
Then ${\mathcal B}_M= {\mathcal P}(\gamma)$, the subbuilding which is the {\em parallel set} of $\gamma$, i.e., 
the union of all geodesics which are a bounded distance from $\gamma$.

We fix an apartment $\A$ in ${\mathcal B}_M$ containing $o$; then $\A$ necessarily contains $F$ (and, hence, $\gamma$) as well. 
Let $\Delta_M$ be a chamber of ${\mathcal B}_M$ in $\A$ with tip $o$; by our convention set in section \ref{general}, 
$\De_M$  contains $\De$, the chamber of ${\mathcal B}$ with the tip $o$. We note that $\De_M$ splits as $F\times \De'_M$.  
 
In the case when ${\mathcal B}$ has rank one (i.e., is a tree) it is well-known that the correct notion of the ``distance'' from a point 
$x \in {\mathcal B}$ to a point $\xi \in \tits{\mathcal B}$ is the value of the Busemann function
$b_{\xi}(x)$, normalized to be zero at the base-point $o$.  
The usual Busemann function $b_{\xi}(x)$ is defined as follows:   
$$ 
b_{\xi}(x): = \lim_{t \to \infty}[ d(\ga(t),x) - d(\ga(t), o)].$$
We note that 
$$ 
b_{\xi}(o)=0.$$

In what follows we will define a $\De_M$-valued Busemann function $b_{\xi,\Delta_M}(x)$ (which should be considered as the $\De_M$-valued
``distance'' from $\xi$ to $x$). Again, we note that $b_{\xi,\Delta_M}$ will be defined so that 
$$ 
b_{\xi,\Delta_M}(o) = 0.$$

\subsection{The $\Delta_M$-valued distance function $\widetilde{d}_{\Delta_M}$ on ${\mathcal B}$}

Before defining vector-valued Busemann function, we first introduce a (partially defined) 
distance function $\widetilde{d}_{\Delta_M}$ on ${\mathcal B}$ which extends the function 
$d_{\De_M}$ defined on ${\mathcal B}_M$. We first note that for every $x\in {\mathcal B}$ there exists an apartment 
$\A_\xi\subset {\mathcal B}$ containing $x$, so that $\xi\in \tits \A_\xi$. For every two apartments $\A_\xi, \A'_\xi$ asymptotic to $\xi$ 
there exists a (typically non-unique) isomorphism $\phi_{\A_\xi,\A_\xi'}: \A_{\xi}\to \A_{\xi}'$ which fixes the intersection $\A_\xi\cap \A_{\xi}'$ 
pointwise. Since this intersection contains a ray asymptotic to $\xi$, it follows that the extension of   
$\phi_{\A_\xi,\A_\xi'}$ to the ideal boundary sphere of $\A_\xi$ fixes the point $\xi$. Thus, the apartments 
$\{\A_\xi\}$ form an {\em atlas} on ${\mathcal B}$ with transition maps in $\widetilde{W}_M$, the stabilizer of $\xi$ in $\widetilde{W}$. 
(Here we are abusing the notation and identify an apartment $\A_\xi$ in ${\mathcal B}$ and its isometric parameterization $\A\to \A_\xi$.) 
The only axiom of a building lacking in this definition is that not every two points in ${\mathcal B}$ belong to a common apartment $\A_\xi$. 
Nevertheless, for points $x,y \in {\mathcal B}$ which belong to some $\A_\xi$ we can repeat the definition of a chamber-valued distance 
function \cite{KLM1}: 
$$
\widetilde{d}_{\Delta_M}(x,y) = d_{\Delta_M}(\phi_{\A_{\xi}, \A}(x), \phi_{\A_{\xi}, \A}(y)).$$

In particular, $\widetilde{d}_{\Delta_M}$ restricted to $\B_M$ coincides with $d_{\De_M}$. 
Then $\widetilde{d}_{\Delta_M}$ is a partially-defined ${\Delta_M}$-valued distance function on ${\mathcal B}$. 
As in the case of the definition of $\De$-distance on ${\mathcal B}$, one verifies that $\widetilde{d}_{\Delta_M}$ is independent of the 
choices of apartments and their isomorphisms. It is clear from the definition that $\widetilde{d}_{\Delta_M}$ is invariant under 
the subgroup $P\subset G$ (but not under $G$ itself). 

\subsection{The Busemann function $b_{\xi,\Delta_M}(x)$}

We are now in position to repeat the definition of the usual Busemann function. 

Pick $x\in \B$ and let $\gamma'$ be a complete geodesic in ${\mathcal B}$ containing $x$ and asymptotic to $\xi$. 
Let ${\mathcal P}(\gamma')$ be the parallel set of $\gamma'$. The following simple lemma is critical in what follows.

\begin{lemma} \label{d-tilde}
There exists $t_0$ such that for all $t \geq t_0$ we have 
$\ga(t) \in {\mathcal P}(\gamma')$ and, furthermore,  the subray $\ga([t_0,\infty))$ and the point 
$x$ belong to a common apartment $\A'\subset {\mathcal P}(\ga')$. In particular, $\A'$ contains $\xi$ in its ideal boundary 
and $\widetilde{d}_{\Delta_M}(\ga(t),x)$ is well-defined for $t \geq t_0$.  
\end{lemma}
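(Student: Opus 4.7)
The plan is to produce a single apartment $\A'$ that contains the complete geodesic $\gamma'$ together with a sufficiently deep forward tail of $\gamma$, and then to read off all four assertions of the lemma from the affine structure of $\A'$.

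First I would establish the common apartment. Both $\gamma$ and $\gamma'$ are forward-asymptotic to the same ideal point $\xi$, so they determine Weyl sectors in $\B$ that share a common subsector pointing toward $\xi$. Invoking the standard apartment-system axioms for Euclidean buildings---in particular, the fact that a complete flat (here, the line $\gamma'$) can be glued to an apartment containing a shared asymptotic subsector---one obtains $t_0 \ge 0$ and an apartment $\A' \subset \B$ with
$$
\gamma' \;\cup\; \gamma\big|_{[t_0,\infty)} \;\subset\; \A'.
$$

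From $\A'$ the remaining conclusions follow by direct inspection. Since $x \in \gamma' \subset \A'$, the point $x$ and the subray $\gamma|_{[t_0,\infty)}$ share the apartment $\A'$, which yields the common-apartment assertion. Because $\A'$ contains a ray asymptotic to $\xi$, we have $\xi \in \tits \A'$, so $\A'$ lies in the $\xi$-adapted atlas $\{\A_\xi\}$ defined above; hence $\widetilde d_{\De_M}(\gamma(t), x)$ is defined for all $t \ge t_0$. Finally, $\A'$ is a Euclidean affine space containing the complete line $\gamma'$, and every point of $\A'$ lies on a line in $\A'$ parallel to $\gamma'$; because apartments are isometrically embedded in $\B$, any such parallel line is a complete geodesic at constant distance from $\gamma'$, so $\A' \subset {\mathcal P}(\gamma')$, and in particular $\gamma(t) \in {\mathcal P}(\gamma')$ for all $t \ge t_0$.

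The main obstacle will be the first step: producing an apartment that simultaneously contains the full complete geodesic $\gamma'$ and an asymptotic subray of $\gamma$. The shared-subsector statement by itself is classical, but upgrading it so that the entire $\gamma'$ (and not merely a subray) is absorbed into the common apartment is where one must use the complete package of apartment-system axioms for Euclidean buildings. Once this gluing is carried out, the remaining assertions are a routine consequence of the Euclidean structure of $\A'$ and the definition of the parallel set.
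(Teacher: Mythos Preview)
Your approach is genuinely different from the paper's. The paper restricts to the Bruhat--Tits case and uses the group action: it picks a unipotent $u\in G$ fixing $\xi$ that carries $\mathcal P(\gamma)$ to $\mathcal P(\gamma')$; since unipotents fixing $\xi$ fix a subray of $\overline{o\xi}$ pointwise, this gives $\gamma([t_0,\infty))\subset\mathcal P(\gamma')$ immediately. Then it chooses any apartment $\A'$ in $\mathcal P(\gamma')$ containing $x$ and $\gamma(t_0)$, and uses that every apartment of $\mathcal P(\gamma')$ contains $\gamma'$ (hence $\xi$), so the forward ray from $\gamma(t_0)$ toward $\xi$ stays in $\A'$. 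Your purely geometric route would, if completed, work for general Euclidean buildings, which the paper asserts but does not prove.

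However, your key step has a real gap. You want an apartment containing the \emph{full line} $\gamma'$ together with a subray of $\gamma$, and you appeal to ``the complete package of apartment-system axioms.'' No standard axiom gives this directly: the usual statements produce an apartment containing two asymptotic \emph{subrays}, not one subray plus an entire bi-infinite line. The clean way to get what you need is precisely the parallel-set structure the paper exploits: first argue geometrically that asymptotic rays in a Euclidean building are eventually at constant distance, so $\gamma([t_0,\infty))\subset\mathcal P(\gamma')$; then use that $\mathcal P(\gamma')$ splits as $\gamma'\times Y$ and that its apartments are of the form $\gamma'\times(\text{apartment of }Y)$, so any apartment of $\mathcal P(\gamma')$ through $x$ and $\gamma(t_0)$ automatically contains all of $\gamma'$. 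Once you route the argument through $\mathcal P(\gamma')$ in this way, your deductions in the second and third paragraphs go through without change.
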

\begin{proof} We will give a proof only in the case when ${\mathcal B}=\B_G$ is the Bruhat-Tits building of a 
reductive group $G$ over a nonarchimedean valued field, although the statement holds for general Euclidean buildings as well. 

There exists a unipotent element $u\in G$ that carries ${\mathcal P}(\gamma)$ to ${\mathcal P}(\gamma')$ and fixes $\xi$.  
Since $u$ is unipotent and fixes $\xi$, it also fixes an infinite subray of $\overline{o\xi}$. So there exists $t_0$ such that 
$\ga([t_0,\infty)) \in {\mathcal P}(\gamma')$. Now choose an
apartment $\A'$ in ${\mathcal P}(\gamma')$ which contains $x$ and $\ga(t_0)$. Since every apartment in
${\mathcal P}(\gamma')$ contains $\gamma'$, we have $\xi \in \tits \A'$ and, consequently,  
$\ga(t) \in \A'$ for $t \geq t_0$. \end{proof} 

We  now define $b_{\xi,\Delta_M}(x)$ by
$$
b_{\xi,\Delta_M}(x):= \lim_{t \to \infty}[ \widetilde{d}_{\Delta_M}(\ga(t),x) - \widetilde{d}_{\Delta_M}(\ga(t), o)].
$$
We need to show that the limit on the right-hand side exists. 
We first claim that for each $t\ge t_0$, the difference vector 
$$
f(t,x):=\widetilde{d}_{\Delta_M}(\ga(t),x) - \widetilde{d}_{\Delta_M}(\ga(t), o)$$
is a well-defined element of $\Delta_M$, where $t_0$ is as in Lemma \ref{d-tilde} above. Indeed, 
by that lemma, $\widetilde{d}_{\Delta_M}(\ga(t),x)\in \De_M$ is well-defined for $t\ge t_0$. Next, 
$\ga\subset F$ and $\De_M=F\times \De'_M$. Therefore, 
$$ 
- \widetilde{d}_{\Delta_M}(\ga(t), o)= -\overrightarrow{\ga(t)o} \in \Delta_M.$$
Hence, by convexity of $\De_M$, $f(t,x)\in \Delta_M$.  

\begin{lemma}
$f(t,x)$ is constant in $t$ for $t \geq t_0$. 
\end{lemma}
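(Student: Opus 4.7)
The plan is to show directly that $f(t+s,x)=f(t,x)$ for all $s\ge 0$ and $t\ge t_0$, by computing each of the two vector-valued distances in a common apartment asymptotic to $\xi$ and using the product structure of $\mathcal{B}_M$. First I would fix a chart map $\phi := \phi_{\A',\A}$ provided by Lemma \ref{d-tilde}, transporting $x$ and the ray $\gamma([t_0,\infty))$ into the model apartment $\A$; the map $\phi$ fixes $\xi$ and the intersection $\A\cap \A'$ pointwise, and, by definition of $\widetilde d_{\Delta_M}$, we have $\widetilde d_{\Delta_M}(\gamma(t),x)=d_{\Delta_M}(\gamma(t),\phi(x))$ for all $t\ge t_0$. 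On the other hand $\gamma$ and $o$ already lie in $\A$, so $\widetilde d_{\Delta_M}(\gamma(t),o)=d_{\Delta_M}(\gamma(t),o)$. All four quantities that enter $f(t+s,x)-f(t,x)$ are therefore ordinary $\Delta_M$-distances computed inside $\A$.

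Next I would use the decomposition $\A=F\oplus F^{\perp}$ corresponding to the splitting $\B_M=F\times \mathcal{Y}_M$, noting that $W_M$ fixes $F$ pointwise and acts on $F^{\perp}$ with fundamental chamber $\Delta_M'$, so that $\Delta_M=F\oplus \Delta_M'$. Writing an arbitrary vector $w\in\A$ as $w=w_F+w_{F^{\perp}}$, the $W_M$-fold in $\Delta_M$ has the explicit form
\[
W_M\cdot w\cap\Delta_M \;=\; w_F+\bigl[w_{F^{\perp}}\bigr]_{\Delta_M'},
\]
where $[\,\cdot\,]_{\Delta_M'}$ denotes the $W_M$-fold in $\Delta_M'$. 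Since the geodesic $\gamma$ lies in $F$ and is parametrized by arclength, the vector $v_\xi:=\gamma(t+s)-\gamma(t)=sv_\xi^{(1)}$ lies entirely in $F$, so it contributes nothing to the $F^{\perp}$-component of either $\phi(x)-\gamma(t)$ or $o-\gamma(t)$.

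Applying the formula above to both pairs of points yields
\[
d_{\Delta_M}(\gamma(t+s),\phi(x))-d_{\Delta_M}(\gamma(t),\phi(x)) \;=\; -s v_\xi^{(1)} \;=\; d_{\Delta_M}(\gamma(t+s),o)-d_{\Delta_M}(\gamma(t),o),
\]
because in both differences the $\Delta_M'$-components cancel exactly and only the $F$-components $-(\gamma(t+s)-\gamma(t))=-sv_\xi^{(1)}$ survive. Subtracting the two identities gives $f(t+s,x)-f(t,x)=0$, which is the desired constancy.

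The main obstacle is purely a bookkeeping one: one must verify that the chart map $\phi$ does not affect the computation, i.e.\ that $\widetilde d_{\Delta_M}$ really can be computed inside the single apartment $\A$ after transporting $x$ via $\phi$, and that the identification is compatible with the splitting $\A=F\oplus F^{\perp}$. This follows from the fact that $\phi$ is an element of the $W_M$-valued transition atlas and therefore commutes with both the $F$/$F^{\perp}$ decomposition and the $W_M$-action defining $\Delta_M$-distance; no deeper input is required.
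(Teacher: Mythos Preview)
Your proposal is correct and follows essentially the same route as the paper's proof: transport $x$ into $\A$ via the chart $\phi=\phi_{\A',\A}$, then exploit that $W_M$ fixes the flat $F\supset\gamma$ pointwise so that the $\Delta_M$-fold of $\overrightarrow{\gamma(t)\,\phi(x)}$ changes with $t$ only in its $F$-component. The paper phrases this slightly differently---it picks a single $w\in W_M$ sending $x':=\phi(x)$ to $x''\in\Delta_M$, notes that $w$ fixes every $\gamma(t)$, and then reads off $f(t,x')=\overrightarrow{\gamma(t)x''}-\overrightarrow{\gamma(t)o}=\overrightarrow{ox''}$ directly---whereas you compute the increment $f(t+s,x)-f(t,x)$ via the explicit $F\oplus F^{\perp}$ decomposition; but the content is the same. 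One small correction: the transition maps of the atlas lie in $\widetilde{W}_M$ (the stabilizer of $\xi$ in $\widetilde{W}$), not just $W_M$, though this does not affect your argument.
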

\begin{proof}   
Let $\phi_{\A',\A}: \A' \to \A$ be an isomorphism of apartments as above fixing $\xi$. Hence 
$\phi_{\A', \A}(\ga(t)) = \ga(t)$ for $t \geq t_0$ and {\em by definition}
$$ 
\widetilde{d}_{\Delta_M}(\ga(t),x) = d_{\Delta_M}(\ga(t), x'), \,\, t \geq t_0,
$$
where $x':= \phi_{\A', \A}(x)$. Then
$$
f(t,x)= f(t,x')= d_{\Delta_M}(\ga(t),x') - d_{\Delta_M}(\ga(t), o),\,\, t \geq t_0. 
$$
Accordingly, replacing $x$ by $x'$ we have reduced to the case where $x', o$ and $\ga(t), t \geq t_0$ are contained
in the  apartment $\A$. Now apply an element of the Weyl group of $M$ (which fixes $\gamma$
and, hence, $\ga(t), \forall t$) to $x'$ to obtain $x'' \in \Delta_M$, so that
$$
\overrightarrow{\ga(t) x''}= \widetilde{d}_{\De_M}(\ga(t),x), \quad t\ge t_0. 
$$
The lemma now follows from 
$$
f(t,x') = \overrightarrow{\ga(t)x''}- \overrightarrow{\ga(t)o} = \overrightarrow{ox''},$$
see figure below. 

\begin{figure}[h]
\begin{displaymath}
\begin{xy}
(-40,0)*{}="A"; (0, 0)*{}="B"; (40, 0)*{}="C"; (20, 20)*{}="D";
"A"*{\bullet}; "B"*{\bullet}; "C"*{\bullet}; "D"*{\bullet};
{\ar "A";"B"};
{\ar "B"; "C"};
{\ar "B"; "D"};
{\ar "C"; "D"};
(-42, 2)*{  \xi}; (-6, 3)*{  \ga(t)}; (42, 2)*{  o}; (20, 23)*{  x''}
\end{xy}
\end{displaymath}
\caption{$f(t,x^{\prime})$ is constant for $t \geq t_0.$  }
\end{figure}
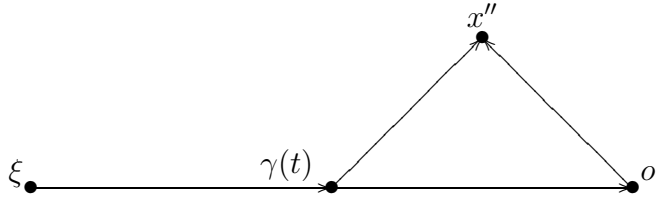

\end{proof}


\begin{lemma} \label{Busemann_comp_lem}
1. $b_{\xi,\De_M}$ is invariant under $K_P=N K_M$. 

2. If 
$b_{\xi,\De_M}(x)=b_{\xi,\De_M}(y)$ then $K_P x= K_P y$. 
\end{lemma}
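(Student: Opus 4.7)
My plan is to handle the two parts separately: for (1) I would split the invariance into the pieces $K_M$ and $N$, in each case exploiting the $P$-invariance of $\widetilde{d}_{\De_M}$ combined with the fact that the relevant group element fixes either $\gamma$ in its entirety or at least a tail of $\gamma$. For (2) I would compute $b_{\xi,\De_M}$ on the standard representatives $x_\la$ by a direct apartment calculation and then use the Iwasawa decomposition to identify the $K_P$-orbits on $G\cdot o$ with the lattice $\De_M\cap X_*(T)$.

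For (1), I would first choose $\gamma$ to pass through $o$. The group $M$ acts on the flat factor $F$ of $\B_M=F\times\mathcal Y_M$ through its abelianization $M^{ab}$, so the action is by translations; since $K_M=M\cap K$ is bounded and fixes $o\in F$, it must act trivially on $F$, and in particular it fixes $\gamma$ pointwise. Given $k\in K_M\subset P$, the $P$-invariance of $\widetilde{d}_{\De_M}$ then gives
$$
\widetilde{d}_{\De_M}(\ga(t),kx)=\widetilde{d}_{\De_M}(k^{-1}\ga(t),x)=\widetilde{d}_{\De_M}(\ga(t),x)
$$
for $t$ large, and since also $ko=o$ we conclude $b_{\xi,\De_M}(kx)=b_{\xi,\De_M}(x)$. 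For $n\in N$, the argument used in the proof of Lemma \ref{d-tilde} shows that the unipotent element $n$ fixing $\xi$ in fact fixes a tail of $\gamma$, so that $n\ga(t)=\ga(t)$ for all $t\ge t_0$; then $\widetilde{d}_{\De_M}(\ga(t),nx)=\widetilde{d}_{\De_M}(\ga(t),x)$ for $t\ge t_0$ by the same $P$-invariance, and passing to the limit gives $b_{\xi,\De_M}(nx)=b_{\xi,\De_M}(x)$. Hence $K_P=NK_M$ preserves $b_{\xi,\De_M}$.

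For (2) the core calculation is $b_{\xi,\De_M}(x_\la)=\la$ for every $\la\in\De_M\cap X_*(T)$. Since $\gamma\subset F\subset\A$ and $x_\la\in\A$, everything can be computed inside $\A$. Writing $\A=F\oplus F^\perp$ and $\De_M=F\oplus\De_M'$, and using the key fact that $W_M$ fixes $F$ pointwise, one decomposes $\la=\la_F+\la'$ with $\la'\in\De_M'$; the vectors $x_\la-\ga(t)=(\la_F-\ga(t))+\la'$ and $o-\ga(t)=-\ga(t)$ then already lie in $\De_M$ without any $W_M$-correction, so their difference is exactly $\la$. Combining the Iwasawa decomposition $G=PK=NMK$ with the Cartan decomposition $M=\bigsqcup_{\la\in\De_M\cap X_*(T)}K_M t^\la K_M$, every point of $G\cdot o$ lies in some $K_P\cdot x_\la$. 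By (1) and the computation, $b_{\xi,\De_M}$ is then constant with value $\la$ on $K_P\cdot x_\la$ and takes distinct values on distinct orbits, so its fibers on $G\cdot o$ are exactly the $K_P$-orbits, proving (2).

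The most delicate point is the assertion that $K_M$ acts trivially on the flat factor $F$; once that structural fact is in hand, the rest is a routine combination of the $P$-invariance of $\widetilde{d}_{\De_M}$, the horoball-fixing property of unipotents used already in the proof of Lemma \ref{d-tilde}, and the Iwasawa/Cartan decompositions.
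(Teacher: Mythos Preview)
Your argument is correct and matches the paper's approach closely: for (1) both split $K_P=NK_M$ and use the $P$-invariance of $\widetilde d_{\De_M}$ together with the fact that $K_M$ fixes all of $\gamma$ while unipotents in $N$ fix a tail; for (2) both reduce via the $N$-action to $\B_M$, where the Busemann function becomes $d_{\De_M}(o,\cdot)$ and the $K_M$-orbits are exactly its level sets. One small point: your phrasing of (2) via the group-theoretic Iwasawa and Cartan decompositions literally covers only points of $G\cdot o$, whereas the lemma is stated for arbitrary $x,y\in\B$; the paper handles this by invoking the building-theoretic fact that every $z\in\B$ can be moved into $\B_M$ by some $n\in N$, which is the same content and extends your argument immediately.
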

\proof 1. Let $u\in P$ be unipotent. Then $u$ not only fixes $\xi$ but, moreover, for 
every geodesic ray $\beta: \R_+\to {\mathcal B}$ asymptotic to  $\xi$, there exists $t_u$ so that 
$u(\beta(t))=\beta(t)$ for all $t\ge t_u$. Therefore, by the invariance property of $\widetilde{d}_{\De_{M}}$,
$$
\widetilde{d}_{\De_{M}}(x, \ga(t))=\widetilde{d}_{\De_{M}}(u(x), u(\ga(t)))= \widetilde{d}_{\De_{M}}(u(x), \ga(t)). 
$$
It then follows from the definition of $b_{\xi,\De_M}$ that it is invariant under $u$. The same argument works 
for $u$ replaced with $k\in K_M$ since it suffices to know that (the entire ray) $\rho$ is fixed by $k$. 

\medskip 
2. For every $z\in \B$ there exists $n\in N$ so that $n(z)\in \B_M$. 
Therefore, applying elements of $N$ to $x, y$, we 
reduce the problem to the case when $x, y\in {\mathcal B}_M$. For such $x, y$,   
$$
d_{\De_M}(o, y)= b_{\xi,\De_M}(y)= b_{\xi,\De_M}(x)= d_{\De_M}(o, x). 
$$
Therefore, $x, y$ belong to the same $K_M$-orbit. \qed 

\medskip

This lemma allows us to give a purely algebraic characterization of the space of 
based ideal triangles $\I\T(\la,\mu; \xi)$: 

\begin{cor} \label{IT_comp_cor}
$
\I\T(\la,\mu; \xi)= K_P x_\la \cap K x_\mu = b_{\xi,\De_M}^{-1}(\la) \cap S_{\mu}(o).  
$
\end{cor}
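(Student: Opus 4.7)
The plan is to unwind the two claimed equalities separately, drawing on the definition of $\I\T(\la,\mu;\xi)$ and on Lemma \ref{Busemann_comp_lem}.

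The second equality $\I\T(\la,\mu;\xi)= b_{\xi,\De_M}^{-1}(\la)\cap S_\mu(o)$ is immediate from the definition of $\I\T(\la,\mu;\xi)$: by construction this set consists of points $y$ satisfying simultaneously $d_\De(o,y)=\mu$ and $b_{\xi,\De_M}(y)=\la$, which is precisely $S_\mu(o)\cap b_{\xi,\De_M}^{-1}(\la)$.

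For the first equality I would establish $b_{\xi,\De_M}^{-1}(\la)=K_P x_\la$ and then intersect both sides with $S_\mu(o)=Kx_\mu$. First I would compute $b_{\xi,\De_M}(x_\la)$. Since $\la\in\De_M\cap\La$, the point $x_\la=t^\la\cdot o$ lies in $\B_M$, because $t^\la\in T\subset M$. The second half of the proof of Lemma \ref{Busemann_comp_lem} observed that for any $y\in\B_M$ one has $b_{\xi,\De_M}(y)=d_{\De_M}(o,y)$; applied to $y=x_\la$ this gives $b_{\xi,\De_M}(x_\la)=\la$. The inclusion $K_P x_\la\subset b_{\xi,\De_M}^{-1}(\la)$ then follows at once from part 1 of the lemma, which says that $b_{\xi,\De_M}$ is constant on $K_P$-orbits. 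Conversely, if $b_{\xi,\De_M}(y)=\la=b_{\xi,\De_M}(x_\la)$, part 2 of the lemma yields $K_P y=K_P x_\la$, so $y\in K_P x_\la$.

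Combining these gives $b_{\xi,\De_M}^{-1}(\la)=K_P x_\la$, and intersecting with $S_\mu(o)=Kx_\mu$ produces $\I\T(\la,\mu;\xi)=K_P x_\la\cap Kx_\mu$. There is no real obstacle here — the work has already been done in Lemma \ref{Busemann_comp_lem} — the only small point to verify carefully is the identity $b_{\xi,\De_M}(x_\la)=\la$, which is exactly the compatibility between the vector-valued Busemann function and the $\De_M$-distance already recorded inside the proof of the lemma.
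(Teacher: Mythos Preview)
Your argument is correct and is exactly the intended proof: the paper states the corollary immediately after Lemma \ref{Busemann_comp_lem} without further justification, and your unpacking via the definition of $\I\T(\la,\mu;\xi)$, the identity $b_{\xi,\De_M}(x_\la)=d_{\De_M}(o,x_\la)=\la$ (recorded in the proof of part 2 of that lemma), and the two parts of the lemma to get $b_{\xi,\De_M}^{-1}(\la)=K_P x_\la$ is precisely what the paper has in mind.
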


\section{Retractions} \label{retractions_sec}

\subsection{The retractions $\rho_{{\bf b},\mathcal A}$} \label{rho_review}

Attached to any alcove ${\bf b}$ in an apartment $\mathcal A$ is a retraction $\rho_{{\bf b}, \mathcal A} : \B_G \rightarrow \mathcal A$.  It is distance-preserving and simplicial.   Let us abbreviate it here by $\rho$. Recall that 
this retraction is defined as follows: Pick an apartment $\A'\subset \B$ containing alcoves ${\bf b}$ and ${\bf x}$. Then 
there exists a unique isomorphism of apartments $\phi: \A' \to A$ fixing ${\bf b}$.  Then $\rho({\bf x}) := \phi({\bf x})$. 

We need to review some of the basic properties of $\rho$.  Let ${\bf x}$ denote any alcove in the building.  
Take a minimal gallery joining the base alcove ${\bf a} \subset \mathcal A$ to ${\bf x}$.   Let ${\bf x}_0$ denote the next-to-last alcove in this gallery.  Let $F_0$ denote the codimension one facet separating ${\bf x}_0$ from ${\bf x}$.   

Let ${\bf c}' := \rho({\bf x}_0)$, and let $H$ denote the hyperplane in $\mathcal A$ which contains $\rho(F_0)$ and let $s_{H}$ denote the corresponding reflection in $\mathcal A$.    Let ${\bf c} := s_H({\bf c}')$.  Assuming we know ${\bf c}'$ by induction, what are the possibilities for $\rho({\bf x})$?  To visualize this, we will imagine ${\bf x}_0$ and $F_0$ as being fixed, and ${\bf x}$ as ranging over the affine line ${\mathbb A}^1$ consisting of the set of all alcoves ${\bf x} \neq {\bf x}_0$ containing $F_0$ as a face. Then one of the following holds: 

\smallskip

\noindent ${\bf Position \ 1:}$ If ${\bf b}$ and ${\bf c}'$ are on the same side of $H$, then all ${\bf x} \in {\mathbb A}^1$ retract under $\rho$ onto ${\bf c}$;

\smallskip

\noindent ${\bf Position \ 2:}$ if ${\bf b}$ and ${\bf c}'$ are on opposite sides of $H$, then one point in ${\mathbb A}^1$ retracts onto ${\bf c}$, and all remaining points of ${\mathbb A}^1$ retract onto ${\bf c}'$.

\medskip
Suppose ${\bf x} \in \B_G$ is joined to the base alcove ${\bf a}$ by a gallery corresponding to a reduced word expression for $w \in \widetilde{W}$.  It follows from the discussion above that $\rho_{\bf b, \mathcal A}({\bf x}) \leq w{\bf a}$ with respect to the Bruhat order $\leq$ on the set of alcoves in $\mathcal A$.  
(This Bruhat order is determined by the set of simple affine reflections corresponding to the walls of ${\bf a}$.)

Using this, it is not difficult to prove the following statement.

\begin{lemma} \label{rho(x)_in_Omega(mu)}
Let $x \in \overline{Kx_\mu}$.  The $\rho_{{\bf b}, \mathcal A}(x) \in \Omega(\mu)$.
\end{lemma}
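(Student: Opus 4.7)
The plan is to apply the Bruhat order consequence of the Position~1/Position~2 analysis recalled just above the lemma. First, using the stratification $\overline{Kx_\mu} = \bigsqcup_{\mu_0 \preceq \mu} Kx_{\mu_0}$, I reduce to showing $\rho(x) \in \Omega(\mu_0)$ for $x \in Kx_{\mu_0}$ with $\mu_0 \preceq \mu$ dominant; the claim then follows because $\Omega(\mu) = \bigsqcup_{\mu_0 \preceq \mu\ \mathrm{dom.}} W\mu_0 \supseteq \Omega(\mu_0)$. Since $\rho := \rho_{{\bf b},\mathcal A}$ is type-preserving and simplicial and $x$ is a vertex of type $o$, we have $\rho(x) = x_\nu$ for some $\nu \in X_*(\ul T)$; the task is to check $\nu \in \Omega(\mu_0)$.

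Next I would choose an alcove ${\bf x} \subset \B_G$ having $x$ as its type-$o$ vertex, and pick a minimal gallery from the base alcove ${\bf a}$ to ${\bf x}$ corresponding to a reduced expression for $w \in \widetilde W$. Since alcoves of $\B_G$ are in bijection with $G/I$ for $I$ the Iwahori fixing ${\bf a}$, we may write ${\bf x} = i w {\bf a}$ with $i \in I$. As $i \in K$ fixes $o$, the type-$o$ vertex $x = i \cdot (w\cdot o)$ satisfies $d_\De(o,x) = d_\De(o, w\cdot o) = \mu_0$, forcing $w\cdot o = x_\eta$ with $\eta \in W\mu_0$; hence $w = t^\eta v$ for some $v \in W$. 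The Bruhat order property then yields $\rho({\bf x}) = w'{\bf a}$ for some $w' \leq w$ in $\widetilde W$, and type-preservation of $\rho$ gives $\rho(x) = x_\nu$ where $\nu$ is the translation part of $w'$ (writing $w' = t^\nu v'$ with $v' \in W$).

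The main remaining step---and the principal obstacle---is the purely combinatorial fact that if $w' = t^\nu v' \leq t^\eta v$ in the affine Bruhat order on $\widetilde W$ with $\eta \in W\mu_0$, then $\nu \in \Omega(\mu_0)$. I would deduce this from the Iwahori stratification of the spherical Schubert variety, $\overline{Kx_{\mu_0}} = \bigsqcup_{\nu' \in \Omega(\mu_0)} I \cdot x_{\nu'}$, together with its compatibility with the Bruhat order: under the projection $\widetilde W \twoheadrightarrow \widetilde W/W \cong X_*(\ul T)$, the affine Bruhat cells $I w'' I/I$ with $w'' \leq w$ map precisely into the Iwahori orbits $I x_{\nu'}$ lying in $\overline{I x_\eta} \subseteq \overline{Kx_{\mu_0}}$, i.e., those with $\nu' \in \Omega(\mu_0)$. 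Combining all the steps gives $\rho(x) = x_\nu$ with $\nu \in \Omega(\mu_0) \subseteq \Omega(\mu)$, proving the lemma.
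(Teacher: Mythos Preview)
Your proposal is correct and follows essentially the same approach the paper indicates: the paper omits a detailed proof, merely remarking that the Bruhat-order inequality $\rho_{\bf b,\mathcal A}({\bf x}) \leq w{\bf a}$ makes the lemma ``not difficult,'' and you have carried out precisely this argument. Your reduction via the stratification $\overline{Kx_\mu} = \coprod_{\mu_0 \preceq \mu} Kx_{\mu_0}$, your choice of an alcove ${\bf x}$ with $x$ as its type-$o$ special point and a minimal gallery from ${\bf a}$, and your deduction of $\nu \in \Omega(\mu_0)$ from $w' \leq t^\eta v$ via the Iwahori stratification $\overline{Kx_{\mu_0}} = \coprod_{\nu' \in \Omega(\mu_0)} I x_{\nu'}$ are all sound and constitute the natural fleshing-out of the paper's hint.
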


Finally, the following lemma is obvious.

\begin{lemma} \label{rho_conj1}
For any $w \in \widetilde{W}$, we have
\begin{equation} \label{w(rho)}
w \circ \rho_{{\bf b}, \mathcal A} \circ w^{-1} = \rho_{w{\bf b}, \mathcal A}.
\end{equation}
\end{lemma}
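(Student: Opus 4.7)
The plan is to unpack both sides of the asserted identity on a point $y \in \B_G$ using the definition of $\rho_{{\bf b},\mathcal{A}}$ recalled in subsection \ref{rho_review}, and observe that they are computed by the same apartment isomorphism. Fix a lift of $w \in \widetilde W$ to an element (still denoted $w$) of the normalizer $N_G(T)$, which acts on $\B_G$; its restriction to $\mathcal{A}$ is the original action of $w$, and in particular $w(\mathcal{A})=\mathcal{A}$.

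First I would choose any apartment $\mathcal{A}' \subset \B_G$ containing both ${\bf b}$ and $w^{-1}(y)$; such an apartment exists by the building axioms. Let $\phi: \mathcal{A}' \to \mathcal{A}$ be the unique apartment isomorphism fixing ${\bf b}$ pointwise, so that by definition
$$
\rho_{{\bf b},\mathcal{A}}(w^{-1}(y)) = \phi(w^{-1}(y)),
$$
and hence $(w \circ \rho_{{\bf b},\mathcal{A}} \circ w^{-1})(y) = w\,\phi\,w^{-1}(y)$. Next I would apply $w$ to the apartment $\mathcal{A}'$: the image $w(\mathcal{A}')$ is an apartment of $\B_G$ containing $w{\bf b}$ and $y$. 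The composition
$$
\psi \;:=\; w\circ \phi \circ w^{-1}: \; w(\mathcal{A}') \longrightarrow w(\mathcal{A}) = \mathcal{A}
$$
is an isomorphism of apartments, and it fixes $w{\bf b}$ because $\phi$ fixes ${\bf b}$. By the uniqueness of the apartment isomorphism between two apartments that fixes a common alcove, $\psi$ must coincide with the isomorphism used to compute $\rho_{w{\bf b},\mathcal{A}}(y)$. Therefore
$$
\rho_{w{\bf b},\mathcal{A}}(y) = \psi(y) = w\,\phi\,w^{-1}(y) = (w\circ \rho_{{\bf b},\mathcal{A}} \circ w^{-1})(y),
$$
proving the identity for arbitrary $y$.

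Since the argument is essentially a direct unpacking of definitions, there is no serious obstacle; the only point requiring care is to confirm the two auxiliary facts used implicitly, namely that $w\in\widetilde W$ preserves $\mathcal{A}$ (immediate from its action on the base apartment) and that an isomorphism between two apartments of a Euclidean building fixing a common alcove is unique (a standard consequence of the building axioms, as the alcove together with its walls determines the isomorphism on a Weyl chamber and hence on the whole apartment).
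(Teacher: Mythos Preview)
Your argument is correct and is precisely the direct unpacking of the definition that the paper has in mind; the paper simply declares the lemma ``obvious'' and gives no proof. Your care about lifting $w$ to an element of $G$ preserving $\mathcal{A}$ and about the uniqueness of the apartment isomorphism fixing a given alcove is exactly what makes the obvious statement rigorous.
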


\subsection{The retractions $\rho_{-\nu, \Delta_G-\nu}$}

For any $\nu \in X_*(T)$, let $W_{-\nu}$ denote the finite Weyl group at $x_{-\nu} \in \mathcal A$, namely, the group generated by the affine reflections in $\mathcal A$ which fix the point $x_{-\nu}$.  Regard $\Delta_G - \nu$ as the $G$-dominant Weyl chamber with apex $x_{-\nu}$.  Then consider the retraction
\begin{align} \label{rho_Delta_G}
\rho_{\Delta_G-\nu}:\mathcal A &\rightarrow \Delta_G - \nu \\
v &\mapsto w(v) \notag
\end{align}
where $w \in W_{-\nu}$ is chosen so that $w(v) \in \Delta_G - \nu$.  

Next choose any alcove ${\bf b} \subset \mathcal A$ with vertex $x_{-\nu}$.  Then the composition
$$
\rho_{\Delta_G-\nu} \circ \rho_{{\bf b}, \mathcal A}: \B_G \rightarrow \Delta_G - \nu
$$
is a retraction of the building onto the chamber $\Delta_G - \nu$.  Because of (\ref{w(rho)}), it is independent of the choice of ${\bf b}$.  Hence we may set
\begin{equation} \label{rho_nu_def}
\rho_{-\nu, \Delta_G-\nu} :=  \rho_{\Delta_G-\nu} \circ \rho_{{\bf b}, \mathcal A}.
\end{equation}

The following lemma will be useful later.

\begin{lemma} \label{preimage1}
For any $G$-dominant cocharacter $\mu$ such that $\nu + \mu$ is also $G$-dominant, we have
$$
\rho^{-1}_{-\nu, \Delta_G-\nu}(x_\mu) = (t^{-\nu}Kt^\nu)x_\mu.
$$
\end{lemma}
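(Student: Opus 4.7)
The plan is to establish the identity by $t^\nu$-equivariance, reducing everything to the baseline retraction centered at the origin. First I would verify that for any alcove ${\bf b}$ with vertex $x_{-\nu}$, conjugation by $t^\nu$ yields
\[
t^\nu \circ \rho_{-\nu,\Delta_G - \nu} \circ t^{-\nu} \;=\; \rho_{o,\Delta_G} \;:=\; \rho_{\Delta_G} \circ \rho_{t^\nu {\bf b},\mathcal A},
\]
the analogous retraction centered at $o$ onto $\Delta_G$. Lemma \ref{rho_conj1} handles the alcove-retraction factor, and simultaneously confirms that the resulting composition is independent of the choice of alcove at $o$; for the outer folding factor one uses that $t^\nu W_{-\nu} t^{-\nu} = W$, the finite Weyl group stabilizing $o$, together with $t^\nu(\Delta_G - \nu) = \Delta_G$, so that conjugation by $t^\nu$ carries $\rho_{\Delta_G - \nu}$ to $\rho_{\Delta_G}$.

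With this equivariance in hand, taking preimages is formal:
\[
\rho^{-1}_{-\nu,\Delta_G - \nu}(x_\mu) \;=\; t^{-\nu}\,\rho^{-1}_{o,\Delta_G}(t^\nu x_\mu) \;=\; t^{-\nu}\,\rho^{-1}_{o,\Delta_G}(x_{\mu + \nu}) \;=\; t^{-\nu} K\, x_{\mu + \nu} \;=\; (t^{-\nu} K t^\nu)\, x_\mu.
\]
The hypothesis that $\nu + \mu$ be $G$-dominant is used precisely at the third equality: it places $x_{\mu+\nu}$ inside $\Delta_G$, which allows me to identify $\rho^{-1}_{o,\Delta_G}(x_{\mu+\nu})$ with the full $\Delta$-sphere $S_{\mu+\nu}(o) = K\, x_{\mu+\nu}$, using the standard description of preimages under the $\Delta$-distance retraction based at $o$ (so this step would fail for arbitrary $\mu$ since $x_{\mu+\nu}$ might not even lie in the target chamber).

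The only real (and mild) obstacle will be the conjugation bookkeeping required to nail down the displayed equivariance cleanly; beyond that, the argument is just a $t^\nu$-transport of the classical identification $\rho^{-1}_{o,\Delta_G}(x_\lambda) = K\, x_\lambda$ for $G$-dominant $\lambda$, so I do not anticipate any genuine difficulty.
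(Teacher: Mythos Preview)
Your proposal is correct and follows essentially the same approach as the paper's proof: both establish the conjugation identity $t^{-\nu}\circ \rho_{0,\Delta_G}\circ t^{\nu} = \rho_{-\nu,\Delta_G-\nu}$ via Lemma~\ref{rho_conj1} and the corresponding fact for the folding maps, then reduce to the ``obvious'' special case $\rho^{-1}_{0,\Delta_G}(x_{\nu+\mu}) = Kx_{\nu+\mu}$. Your explicit remark on where the $G$-dominance of $\nu+\mu$ enters is a helpful addition but does not change the argument.
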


\begin{proof}
The special case
\begin{equation} \label{spec_case}
\rho^{-1}_{0, \Delta_G}(x_{\nu + \mu}) = Kx_{\nu+\mu}
\end{equation}
is obvious.  We have the identities (cf. Lemma \ref{rho_conj1})
\begin{align*}
t^{-\nu} \circ \rho_{{\bf b},\mathcal A} \circ t^\nu &= \rho_{{\bf b}-\nu, \mathcal A} \\ 
t^{-\nu} \circ \rho_{\Delta_G} \circ t^\nu &= \rho_{\Delta_G - \nu},
\end{align*}
and these together with the definition (\ref{rho_nu_def}) yield the formula
\begin{equation} \label{rho_conj2}
t^{-\nu} \circ \rho_{0,\Delta_G} \circ t^{\nu} = \rho_{-\nu, \Delta_G-\nu}.
\end{equation}
Now the desired formula follows from the special case (\ref{spec_case}) above.
\end{proof}

\subsection{The retraction $\rho_{K_P,\Delta_M}$} \label{rho_K_P_Delta_M}

Recall that for a Borel subgroup $B = TU$, there is a corresponding retraction
$$
\rho_{U,\mathcal A}: \B_G \rightarrow \mathcal A
$$
which can be realized as $\rho_{{\bf b}, \mathcal A}$ for ${\bf b}$ ``sufficiently antidominant'' with respect to the roots in ${\rm Lie}(U)$.  We also have the retraction
$$
\rho_{K,\Delta_G} := \rho_{0, \Delta_G}: \B_G \rightarrow \Delta_G
$$
discussed above.  We want to define a retraction
$$
\rho_{K_P,\Delta_M} : \B_G \rightarrow \Delta_M
$$
which interpolates between these two extremes, $\rho_{U,\mathcal A}$ and $\rho_{K,\Delta_G}$.

Before defining $\rho_{K_P,\Delta_M}$ we shall review the construction of a similar retraction $\rho_{I_P,\mathcal A}$ which was introduced in \cite{GHKR2}.

Consider the following two properties of an element $\nu \in X_*(T)$:
\begin{enumerate}
\item[(i)] $\langle \alpha, \nu \rangle = 0 $ for all roots $\alpha \in \Phi_M$;
\item[(ii)] $\langle \alpha, \nu \rangle >>0 $ for all roots $\alpha \in \Phi_N$.
\end{enumerate}
We say $\nu$ is {\em $M$-central} if (i) holds and  and {\em very $N$-dominant} if (ii) holds. 

Let ${\bf a}_M$ denote the base alcove in $\mathcal A$ determined by some set of positive roots $\Psi^+_M$ in $M$.  (For example, we could take $\Psi^+_M =  \Phi_M^+ := \Phi^+ \cap \Phi_M$.)  This means that ${\bf a}_M$ is the region of $\mathcal A$ which lies between the hyperplanes $H_\alpha$ and $H_{\alpha -1}$ for every $\alpha \in \Psi^+_M$.  Let $I_M := I \cap M$, the Iwahori subgroup of $M$ which corresponds to the alcove ${\bf a}_M$.  Set $I_P := N \cdot I_M$.

Let $\mathcal S$ be any bounded subset of the building $\B_G$.   
\begin{lemma} [\cite{GHKR2}] \label{rho_b_A_indep}
Consider the following properties of an alcove ${\bf b} \subset \mathcal A$: 
\begin{enumerate}
\item[(a)] ${\bf b} \subset {\bf a}_M$;
\item[(b)] ${\bf b}$ has a vertex $x_{-\nu}$, where $\nu$ is $M$-central and very $N$-dominant (depending on $\mathcal S$).
\end{enumerate}  
Then the corresponding retractions $\rho_{{\bf b}, \mathcal A}$, for ${\bf b}$ satisfying (a) and (b), all agree on the set $\mathcal S$.  
\end{lemma}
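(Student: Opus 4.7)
The plan is to exploit the inductive, gallery-theoretic description of $\rho_{{\bf b},\A}$ recalled in Section \ref{rho_review}.  According to the Position 1 / Position 2 dichotomy there, for any alcove $\mathbf x\in\B_G$ the value $\rho_{{\bf b},\A}(\mathbf x)$ is computed by an inductive procedure along a minimal gallery, with each step depending solely on which side of a certain affine hyperplane $H\subset\A$ the alcove ${\bf b}$ happens to lie.  Hence, to prove that $\rho_{{\bf b}_1,\A}|_{\calS}=\rho_{{\bf b}_2,\A}|_{\calS}$ it suffices to produce a \emph{finite} collection $\calH(\calS)$ of hyperplanes in $\A$ with two properties: (i) every $H$ arising in the recursion for any $\mathbf x\in\calS$ lies in $\calH(\calS)$, and (ii) ${\bf b}_1$ and ${\bf b}_2$ lie on the same side of every $H\in\calH(\calS)$.

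For (i), I would first observe that $\rho_{{\bf b},\A}$ is distance-preserving and restricts to the identity on $\A$, so $\rho_{{\bf b},\A}(\calS)$ is contained in a bounded subset $R\subset\A$ whose size is controlled by $\mathrm{diam}(\calS)$ alone, independently of ${\bf b}$.  Each hyperplane arising in the recursion separates two consecutive alcoves along a retracted gallery terminating inside $R$, so it must intersect a bounded neighborhood of $R$.  Consequently $\calH(\calS)$ consists of finitely many hyperplanes of the form $H_{\alpha,k}$ with $\alpha\in\Phi$ and $|k|\le C$ for some constant $C=C(\calS)$.

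For (ii), I would decompose $\Phi=\Phi_M\sqcup\Phi_N\sqcup(-\Phi_N)$.  For $\alpha\in\Phi_M$ and $H_{\alpha,k}\in\calH(\calS)$, condition (a) forces ${\bf b}\subset\mathbf a_M$, and since $\mathbf a_M$ is bounded in the $\alpha$-direction (sandwiched between $H_\alpha$ and $H_{\alpha-1}$, up to relabeling by $\Psi_M^+$), every such ${\bf b}$ lies on a canonically determined side of $H_{\alpha,k}$.  For $\alpha\in\Phi_N\cup(-\Phi_N)$, $M$-centrality imposes no restriction on $\langle\alpha,\nu\rangle$, and ``very $N$-dominant'' is taken to mean $\langle\alpha,\nu\rangle>C$ for all $\alpha\in\Phi_N$ (permissible, since the condition is allowed to depend on $\calS$); hence the vertex $x_{-\nu}$ of ${\bf b}$ has $\alpha$-coordinate $-\langle\alpha,\nu\rangle$ of absolute value $>C$ and fixed sign, placing ${\bf b}$ on a uniquely determined side of every $H_{\alpha,k}\in\calH(\calS)$.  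Any two alcoves ${\bf b}_1,{\bf b}_2$ satisfying (a) and (b) therefore produce the same retraction on $\calS$.  The main obstacle is step (i): one must carefully track the recursion to confirm that only hyperplanes near $R$ arise, and no hyperplane wanders off toward ${\bf b}$.  The key point is that the hyperplane appearing at each stage is the image under the partial retraction of a wall crossed by a minimal gallery ending in $R$, hence remains in a bounded region of $\A$.
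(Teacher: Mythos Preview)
Your gallery-recursion approach is different from the paper's proof, which is group-theoretic: it uses the decomposition $G=\coprod_{w\in\widetilde W} I_P\,w\,I$ to cover $\B_G$ by translates $g^{-1}\A$ with $g\in I_P$, and then observes that any such $g$ fixes every alcove ${\bf b}$ satisfying (a) and (b), so that $g$ and $\rho_{{\bf b},\A}$ agree on $g^{-1}\A$ regardless of which such ${\bf b}$ is chosen.

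Your approach, however, contains a genuine gap.  The claim that ``each step depends solely on which side of $H$ the alcove ${\bf b}$ happens to lie'' is false in Position~2.  When ${\bf b}$ and ${\bf c}'$ lie on opposite sides of $H$, one alcove in the affine line $\mathbb A^1$ retracts to ${\bf c}$ and the rest to ${\bf c}'$; but \emph{which} alcove goes to ${\bf c}$ is the terminal alcove of a minimal gallery from ${\bf b}$ to the facet $F_0$, and this terminal alcove can change when ${\bf b}$ changes, even if ${\bf b}_1$ and ${\bf b}_2$ remain on the same side of $H$.  The paper itself flags exactly this issue in the proof of Proposition~\ref{alcove_prop}: ``In this case it is a priori possible that (say) $\rho_1({\bf a}_r)={\bf c}$ while $\rho_2({\bf a}_r)={\bf c}'$.''  There, the resolution requires an additional argument---concatenating galleries through a deeper alcove ${\bf b}=w_0{\bf a}-\nu_1-\nu_2$ and invoking the $J_P$-positive-direction lemma to force minimality, so that the two minimal galleries to $F_0$ end at the same alcove.

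So ``same side of every relevant wall'' is necessary but not sufficient for your induction to go through.  To salvage the argument you would need to supply the missing Position~2 analysis, for instance by adapting the concatenation trick from Section~\ref{sharp_sec}; alternatively, you could bypass the recursion entirely and argue, as the paper does, that each $g\in I_P$ covering a piece of $\calS$ fixes every admissible ${\bf b}$.
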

\begin{proof}
By \cite{GHKR2}, Lemma 11.2.1, there is a decomposition 
$$G = \coprod_{w \in \widetilde{W}} I_P\, w \, I.
$$
It follows that $\B_G$ is the union of all translates $g^{-1}\mathcal A$, as $g$ ranges over $I_P$.  Moreover, for $g \in I_P$, the map $g: g^{-1}\mathcal A \rightarrow \mathcal A$ is a simplicial map fixing the alcove ${\bf b}$ as long as ${\bf b} \subset {\bf a}_M$ and ${\bf b}$ is sufficiently antidominant  with respect to $N$; it follows that, on $g^{-1}\mathcal A$, the map $g: g^{-1}\mathcal A \rightarrow A$ coincides with $\rho_{{\bf b}, \mathcal A}$ for such ${\bf b}$.  

To be more precise, for a bounded subset $\mathcal S \subset \B_G$ let us give a condition $(*_\mathcal S)$ on alcoves ${\bf b}$ such that all retractions $\rho_{\bf b, \mathcal A}$ for ${\bf b}$ satisfying $(*_\mathcal S)$ will coincide on $\mathcal S$.  There exists a bounded subgroup $I_\mathcal S \subseteq I_P$ such that $\mathcal S \subset \cup_{g \in I_\mathcal S} g^{-1}\A$.  Let $I_{\bf b}$ denote the Iwahori subgroup fixing ${\bf b}$.  Then condition $(*_\mathcal S)$ can be taken to be
$$
\noindent (*_\mathcal S) \hspace{.5in} I_\mathcal S \subset I_{\bf b}.
$$
This condition suffices, because if ${\bf b}$ satisfies $(*_\mathcal S)$, then $\rho_{\bf b, \mathcal A}$ and $g \in I_\mathcal S$ will coincide as maps $g^{-1}\mathcal A \to \mathcal A$, since both are simplicial and fix ${\bf b}$.  This proves the lemma.  
\end{proof}

Denote by $\rho_{I_P,\mathcal A}(v)$ the common value of all retractions $\rho_{{\bf b}, \mathcal A}(v)$ where ${\bf b}$ ranges over a set (depending on $v$) of alcoves as in the lemma.  This defines a retraction
$$\rho_{I_P,\mathcal A}: \B_G \rightarrow \mathcal A.$$  
As the notation indicates, it depends on the choice of the alcove ${\bf a}_M$ and the parabolic $P = MN$.  The following result appeared in \cite{GHKR2}.  We give a proof for the benefit of the reader.

\begin{lemma} [\cite{GHKR2}]  \label{GHKR2} Suppose $I_P$ is defined using $P$ and ${\bf a}_M$ as above.  Then:
\begin{enumerate}
\item[(i)] For any $g \in I_P$, we have $\rho_{I_P,\mathcal A}|_{g^{-1}\mathcal A} = g$.
\item[(ii)] For any alcove ${\bf x}$ in $\mathcal A$, we have $\rho^{-1}_{I_P, \mathcal A}({\bf x}) = I_P\, {\bf x}$.
\end{enumerate}  
\end{lemma}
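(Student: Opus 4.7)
My plan is to derive both assertions essentially from the defining property of $\rho_{I_P, \mathcal A}$ established in Lemma \ref{rho_b_A_indep}, together with the $I_P$-Iwahori decomposition of $G$ already invoked there.

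For (i), I would fix $g \in I_P$ and a point $v \in g^{-1} \mathcal A$, and show $\rho_{I_P,\mathcal A}(v) = g(v)$. The idea is that $g : g^{-1}\mathcal A \to \mathcal A$ is a simplicial isomorphism of apartments, so it suffices to produce one alcove $\mathbf{b} \subset \mathcal A$ which (a) satisfies the hypotheses of Lemma \ref{rho_b_A_indep} relative to the bounded set $\mathcal S = \{v\}$ (so that $\rho_{\mathbf b, \mathcal A}$ agrees with $\rho_{I_P,\mathcal A}$ at $v$) and simultaneously (b) is fixed by $g$. Indeed, once both conditions hold, the two simplicial maps $\rho_{\mathbf{b}, \mathcal A}$ and $g$ from $g^{-1}\mathcal A$ to $\mathcal A$ both fix the alcove $\mathbf b$, hence coincide on $g^{-1}\mathcal A$, giving $\rho_{I_P,\mathcal A}(v) = \rho_{\mathbf b, \mathcal A}(v) = g(v)$. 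To produce such a $\mathbf b$, I would choose $\mathbf b \subset \mathbf a_M$ with a vertex $x_{-\nu}$ for $\nu$ $M$-central and sufficiently $N$-dominant; the key observation is that for $\nu$ far enough in the $N$-dominant direction, the alcove $\mathbf b$ is fixed by every element of $I_P = N \cdot I_M$ needed to absorb $g$ (using $I_M \subset$ the Iwahori attached to $\mathbf a_M$ and the standard fact that elements of $N$ fix arbitrarily deep $N$-antidominant alcoves). This is the main technical point, but it is exactly the content of the proof of Lemma \ref{rho_b_A_indep} above.

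For (ii), the inclusion $I_P \cdot \mathbf x \subseteq \rho_{I_P, \mathcal A}^{-1}(\mathbf x)$ is immediate from (i): if $\mathbf y = h \mathbf x$ with $h \in I_P$, then setting $g := h^{-1} \in I_P$ we have $\mathbf y \in g^{-1}\mathcal A = h\mathcal A$ and (i) yields $\rho_{I_P,\mathcal A}(\mathbf y) = g(\mathbf y) = h^{-1}h \mathbf x = \mathbf x$. For the reverse inclusion, I would use the decomposition
\[
\B_G \;=\; \bigcup_{g \in I_P} g^{-1}\mathcal A,
\]
which is a consequence of the Iwahori double coset decomposition $G = \coprod_{w \in \widetilde W} I_P\, w\, I$ recalled in the proof of Lemma \ref{rho_b_A_indep} (every apartment containing $\mathbf a$ is an $I$-translate of $\mathcal A$, and $I_P \cdot I = G$ at the level of apartments). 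Then for any $\mathbf y \in \rho_{I_P,\mathcal A}^{-1}(\mathbf x)$, pick $g \in I_P$ with $\mathbf y \in g^{-1}\mathcal A$; by (i), $\mathbf x = \rho_{I_P,\mathcal A}(\mathbf y) = g(\mathbf y)$, whence $\mathbf y = g^{-1}\mathbf x \in I_P \cdot \mathbf x$.

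The main obstacle is really localized in (i): verifying that an alcove $\mathbf b$ can be chosen simultaneously in $\mathbf a_M$, sufficiently antidominant to make $\rho_{\mathbf b,\mathcal A}$ compute $\rho_{I_P,\mathcal A}$ on the bounded set containing $v$, and fixed by the specific element $g \in I_P$ at hand. Once one is willing to push $\mathbf b$ arbitrarily far in the $N$-antidominant direction (depending on $g$ and $v$), this reduces to the elementary fact that the fixator of a deep $N$-antidominant alcove exhausts any prescribed bounded subgroup of $I_P$; this is implicit in the proof of Lemma \ref{rho_b_A_indep} and I would invoke it directly. Parts (i) and (ii) then follow with no further input.
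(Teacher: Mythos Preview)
Your proposal is correct and follows essentially the same route as the paper: part (i) is exactly what the paper extracts from the proof of Lemma \ref{rho_b_A_indep}, and for part (ii) the paper also uses (i) together with the decomposition $G = \coprod_{w} I_P\, w\, I$, though it phrases the reverse inclusion as a partition argument (both $\{I_P w\mathbf a\}_w$ and $\{\rho_{I_P,\mathcal A}^{-1}(w\mathbf a)\}_w$ partition $G/I$, and one refines the other) rather than your equivalent pointwise chase.
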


\begin{proof}
Part (i) follows from the proof of the lemma above.  It implies that $\rho_{I_P,\mathcal A}$ is $I_P$-invariant, in the following sense: thinking of $\rho_{I_P,\mathcal A}$ as a map $G/I \rightarrow \widetilde{W}I/I$ sending alcoves in $\B_G$ to those in $\mathcal A$, for each $w \in \widetilde{W}$, we have
$$
\rho_{I_P,\mathcal A}^{-1}(w {\bf a}) \supset I_P w {\bf a}.
$$
But since we have disjoint decompositions
$$
G/I = \coprod_{w \in \widetilde{W}} I_P w I/I = \coprod_{w \in \widetilde{W}} \rho^{-1}_{I_P,\mathcal A}(w {\bf a}),
$$
this containment must actually be an equality.  This proves part (ii).
\end{proof}

We now turn to the variant of interest to us here, namely the retraction $\rho_{K_P, \Delta_M}$ onto the $M$-dominant Weyl chamber $\Delta_M \subset \mathcal A$ .  Here we recall $K_M = K \cap M$, and 
$K_P = N \cdot K_M$; 

\begin{defn}
We define the retraction $\rho_{K_P,\De_M}: \B_G \rightarrow \De_M$ by setting
$$
\rho_{K_P,\De_M} = \rho_{\De_M} \circ \rho_{I_P,\A}.
$$
Here $I_P$ is determined by $P = MN$ and the $M$-alcove ${\bf a}_M$ defined in terms of some set of positive roots $\Psi^+_M$ for $M$.
\end{defn}
Of course, we need to show that this is indeed independent of the choice of ${\bf a}_M$.  

\begin{lemma} \label{rho_K_P_defined}
The retraction $\rho_{\De_M} \circ \rho_{I_P,\A}$ is independent of the choice of $M$-alcove ${\bf a}_M$.
\end{lemma}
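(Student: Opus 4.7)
The approach is to give an intrinsic characterization of $\rho_{\De_M}\circ \rho_{I_P,\A}$ via its fibers, which will turn out to be $K_P$-orbits and therefore manifestly independent of ${\bf a}_M$.

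Suppose ${\bf a}_M$ and ${\bf a}'_M$ come from positive systems of $M$ related by some $u\in W_M$ with lift $\dot u \in N_{K_M}(T)$. Then ${\bf a}'_M = \dot u \cdot {\bf a}_M$, $I'_M = \dot u I_M \dot u^{-1}$, and since $W_M$ permutes $\Phi_N$, also $I'_P = \dot u I_P \dot u^{-1}$. Fix $\la \in \De_M$, regarded as a point of $\A$, and write $W_{M,\la}$ for its stabilizer in $W_M$. Because $\rho_{\De_M}^{-1}(\la) = W_M\cdot \la$, and $\rho_{I_P,\A}^{-1}(\dot w\la) = I_P\cdot(\dot w\la)$ for each $w$ by Lemma \ref{GHKR2}(ii), the fiber of $\rho_{\De_M}\circ \rho_{I_P,\A}$ above $\la$ equals
\[
\bigsqcup_{w\in W_M/W_{M,\la}} I_P\cdot(\dot w\la).
\]
The crucial claim is that this union is precisely the $K_P$-orbit $K_P\cdot \la$; granting this, the lemma follows because $K_P = NK_M$ does not depend on ${\bf a}_M$.

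The inclusion $I_P\cdot(\dot w\la)\subseteq K_P\cdot\la$ is immediate since $\dot w \in K_M \subset K_P$. For the reverse, $K_P = NK_M$ and $N$ is normalized by $K_M$, so it suffices to establish the analogous Iwahori--Bruhat-type decomposition of the $K_M$-orbit inside $\B_M$:
\[
K_M\cdot \la \;=\; \bigsqcup_{w\in W_M/W_{M,\la}} I_M\cdot(\dot w\la).
\]
This is proved by restricting the retraction $\rho_{I_M,\A}\colon \B_M \to \A$ (whose fibers are $I_M$-orbits by Lemma \ref{GHKR2}(ii) applied to $M$) to the orbit $K_M\cdot \la$: the image equals $K_M\cdot\la \cap \A = W_M\cdot\la$, while the fiber of this restriction above each $\dot w\la$ is $I_M\cdot(\dot w\la)$, noting that $I_M\cdot(\dot w\la)\subseteq K_M\cdot(\dot w\la) = K_M\cdot\la$. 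Multiplying by $N$ on the left then yields $K_P\cdot\la = \bigsqcup_w I_P\cdot(\dot w\la)$, matching the fiber computation and establishing the claim.

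The main obstacle is the Iwahori--Bruhat decomposition of $K_M\cdot \la$ inside $\B_M$; once this is in hand, the identification of the fibers of $\rho_{\De_M}\circ \rho_{I_P,\A}$ with $K_P$-orbits, and hence independence of the choice of ${\bf a}_M$, follows formally.
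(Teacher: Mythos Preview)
Your proof is correct and takes a genuinely different route from the paper's. The paper argues locally: on a bounded set it realizes $\rho_{I_P,\A}$ as $\rho_{{\bf b}-\nu,\A}$ for a suitable alcove ${\bf b}\subset {\bf a}_M$, reduces to comparing two such choices related by a simple reflection $s_\alpha\in W_M$, and then shows via a minimal-gallery analysis that $\rho_{w{\bf b}_1-\nu,\A}({\bf x})\in\{\rho_{{\bf b}_1-\nu,\A}({\bf x}),\,w(\rho_{{\bf b}_1-\nu,\A}({\bf x}))\}$, so that $\rho_{\De_M}$ erases the discrepancy. Your approach instead gives an intrinsic description of the fibers as $K_P$-orbits, which is manifestly independent of ${\bf a}_M$. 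This is cleaner and has the pleasant side effect of yielding Lemma~\ref{preimage2} essentially for free, whereas the paper proves Lemma~\ref{preimage2} afterward and actually invokes the present lemma in that proof. One small remark: Lemma~\ref{GHKR2}(ii) is stated for alcoves, and you apply it to points $\dot w\lambda$; the extension is immediate from $I_P$-invariance (Lemma~\ref{GHKR2}(i)) together with the fact that the retraction is simplicial, but it is worth a sentence. Your opening paragraph relating the two choices of ${\bf a}_M$ via $u\in W_M$ is correct but not actually used in your argument and could be omitted.
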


\begin{proof}
Fix any bounded set $\mathcal S \subset \B_G$. Recall that on $\mathcal S$, $\rho_{I_P,\A}$ can be realized as the retraction $\rho_{\bf b - \nu, \A}$, for any $M$-central and very $N$-dominant cocharacter $\nu$ (depending on $\mathcal S$) and any alcove $\bf b$ whose closure contains $o$ and which is contained in ${\bf a}_M$.  Suppose ${\bf a}^1_M$ and ${\bf a}^2_M$ are two $M$-alcoves, and ${\bf b}_i \subset {\bf a}^i_M$ are two such alcoves.  We need to show that $\rho_{\De_M} \circ \rho_{\bf b_1-\nu,\A} = \rho_{\De_M} \circ \rho_{\bf b_2-\nu,\A}$ on $\mathcal S$.  

Let $w \in W_M$ be such that $w{\bf a}^1_M = {\bf a}^2_M$.  Then by Lemma \ref{rho_b_A_indep}, we know already that $\rho_{w\bf b_1-\nu,\A} = \rho_{\bf b_2-\nu,\A}$ on $\mathcal S$.  So, it remains to see that $\rho_{\De_M} \circ \rho_{\bf b_1 - \nu, \A} = \rho_{\De_M} \circ \rho_{w{\bf b}_1-\nu,\A}$ on $\mathcal S$.

Without loss of generality, we may assume $w = s_\alpha$, the reflection corresponding to a simple root $\alpha$ in the positive system $\Phi^+_M$.  We claim that $\rho_{w\bf b_1-\nu,\A}({\bf x}) \in \{ \rho_{\bf b_1-\nu,\A}({\bf x}), w(\rho_{\bf b_1-\nu,\A}({\bf x})) \}$.  This will imply the lemma.

Denote by $F$ the unique codimension 1 facet in $\A$ which separates ${\bf c}_1 := {\bf b}_1-\nu$ from ${\bf c}_2 := w{\bf b}_1 -\nu$.  Abbreviate $\rho_{\bf c_i,\A}$ by $\rho_i$ for $i=1,2$.

If ${\bf x} \subset \A$, then both retractions just give ${\bf x}$ and there is nothing to do.  So, assume ${\bf x}$ is not contained in $\A$.  Choose a minimal gallery ${\bf x}\!=\!{\bf x}_0, {\bf x}_1, \dots, {\bf x}_{d}, F$ joining ${\bf x}$ to $F$.  The notation means $F$ is a face of ${\bf x}_d$ and $d \geq 0$ is the distance $d({\bf x},F)$ (by definition, for a facet $\mathcal F$, the distance $d({\bf x},\mathcal F)$ is the minimal number of wall-crossings needed to form a gallery from  ${\bf x}$ to an alcove ${\bf y}$ with $\mathcal F \subset \overline{\bf y}$).

There are two cases to consider.  Assume first that ${\bf x}_d$ is one of the ${\bf c}_i$.  Let us assume ${\bf x}_d = {\bf c}_1$ (the other case is similar).  Then $d({\bf x}, {\bf c}_1) = d$ and $d({\bf x},{\bf c}_2) = d+1$.  In this case ${\bf c}_2, {\bf c}_1, {\bf x}_{d-1}, \dots, {\bf x}$ is a minimal gallery joining ${\bf c}_2$ (and also ${\bf c}_1$) to ${\bf x}$, and it follows that $\rho_1({\bf x}) = \rho_2({\bf x})$.

Now assume that ${\bf x}_d$ is neither ${\bf c}_1$ nor ${\bf c}_2$.  Then there are two minimal galleries
$$
{\bf c}_1, {\bf x}_d, \dots, {\bf x} \hspace{1in} {\bf c}_2, {\bf x}_d, \dots, {\bf x}
$$
and ${\bf x}_d \nsubseteq \A$ (and hence no ${\bf x}_i$ lies in $\A$; see e.g.~\cite{BT1}, (2.3.6)).  Thus these galleries leave $\A$ at $F$, and by looking at how they fold down onto $\A$ under $\rho_1$ and $\rho_2$, we see that $\rho_1({\bf x}) = w(\rho_2({\bf x}))$.

This proves the claim, and thus the lemma.
\end{proof}

Lemma \ref{GHKR2}(ii) above has the following counterpart.  Using Lemma \ref{Busemann_comp_lem}, we see from it that $\rho_{K_P,\De_M} = b_{\xi,\De_M}$.

\begin{lemma} \label{preimage2}
For any $M$-dominant $\lambda \in X_*(T)$,  we have
$$
\rho^{-1}_{K_P, \Delta_M}(x_\lambda) = K_P x_\lambda.
$$
\end{lemma}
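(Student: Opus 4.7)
The plan is to prove both inclusions of $\rho^{-1}_{K_P,\Delta_M}(x_\lambda) = K_P x_\lambda$ separately, using the factorization $\rho_{K_P,\Delta_M}=\rho_{\Delta_M}\circ\rho_{I_P,\mathcal A}$ together with an upgraded form of Lemma \ref{GHKR2}(ii) in which the alcove $\mathbf x$ is replaced by an arbitrary point of $\mathcal A$. The proof of Lemma \ref{GHKR2} carries over verbatim to this setting because it only uses the decomposition $\mathcal B_G=\bigcup_{g\in I_P} g^{-1}\mathcal A$ together with Lemma \ref{GHKR2}(i), neither of which requires the target to be an alcove.

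For the inclusion $\subseteq$, suppose $\rho_{K_P,\Delta_M}(y)=x_\lambda$ and set $z=\rho_{I_P,\mathcal A}(y)\in\mathcal A$. Since $\rho_{\Delta_M}(z)=x_\lambda$, I must have $z=\dot w\cdot x_\lambda=x_{w\lambda}$ for some $w\in W_M$, lifted to $\dot w\in N_M(T)\cap K_M$. The upgraded Lemma \ref{GHKR2}(ii) then forces $y\in I_P\cdot x_{w\lambda}=I_P\dot w\cdot x_\lambda\subseteq K_P\cdot x_\lambda$, since $I_P\subseteq K_P$ and $\dot w\in K_M\subseteq K_P$.

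For the inclusion $\supseteq$, I would write $y=nm\cdot x_\lambda$ with $n\in N$, $m\in K_M$ (using $K_P=NK_M$). Since $N\subseteq I_P$, Lemma \ref{GHKR2}(i) shows $\rho_{I_P,\mathcal A}$, and hence $\rho_{K_P,\Delta_M}$, is invariant under $N$, so it suffices to prove $\rho_{K_P,\Delta_M}(mx_\lambda)=x_\lambda$. As $mx_\lambda\in\mathcal B_M$, the crux is the restriction identity
$$
\rho_{I_P,\mathcal A}\big|_{\mathcal B_M}\;=\;\rho^M_{I_M,\mathcal A},
$$
where the right-hand side is the analogous retraction built inside $\mathcal B_M$ from $I_M=I_P\cap M$. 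This holds because for any $z\in\mathcal B_M$ the Iwahori decomposition of $M$ provides some $g\in I_M\subseteq I_P$ with $z\in g^{-1}\mathcal A$, and both retractions restrict to the map $g$ on $g^{-1}\mathcal A$ by Lemma \ref{GHKR2}(i) applied respectively in $G$ and in $M$. Composing with $\rho_{\Delta_M}$ yields $\rho_{K_P,\Delta_M}\big|_{\mathcal B_M}=\rho^M_{K_M,\Delta_M}$, which is the standard $\Delta_M$-valued distance function $d_{\Delta_M}(o,\cdot)$ on $\mathcal B_M$. Since $m\in K_M$ fixes $o$ and acts by isometries preserving the $\Delta_M$-valued distance, evaluating at $mx_\lambda$ gives $d_{\Delta_M}(o,x_\lambda)=x_\lambda$, as desired.

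The main obstacle is the restriction identity $\rho_{I_P,\mathcal A}|_{\mathcal B_M}=\rho^M_{I_M,\mathcal A}$: it is what allows us to replace the global retraction, which is \emph{not} $K_M$-equivariant on all of $\mathcal B_G$, by a retraction intrinsic to $\mathcal B_M$ on which $K_M$ does act by isometries preserving $d_{\Delta_M}(o,\cdot)$. Everything else is formal manipulation of the two factors $\rho_{\Delta_M}$ and $\rho_{I_P,\mathcal A}$, together with the semidirect decomposition $K_P=NK_M$.
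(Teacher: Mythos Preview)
Your proof is correct. The inclusion $\subseteq$ is handled exactly as in the paper: both of you decompose $\rho_{\Delta_M}^{-1}(x_\lambda)$ as $W_M\cdot x_\lambda$ and then invoke Lemma~\ref{GHKR2}(ii) (your ``upgraded'' version for points rather than alcoves is precisely what the paper uses implicitly).

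For the inclusion $\supseteq$ your route genuinely differs. The paper writes $K_P = I_M W_M I_M N$ and, having $I_P$-invariance from Lemma~\ref{GHKR2}, reduces to proving $W_M$-invariance of $\rho_{K_P,\Delta_M}$; this is then established via the conjugation identity $w\circ\rho_{\mathbf b,\mathcal A}\circ w^{-1}=\rho_{w\mathbf b,\mathcal A}$ (Lemma~\ref{rho_conj1}) together with the well-definedness Lemma~\ref{rho_K_P_defined}. You instead write $K_P=NK_M$, use $N\subseteq I_P$ for $N$-invariance, and reduce to the sub-building $\mathcal B_M$ via the restriction identity $\rho_{I_P,\mathcal A}\big|_{\mathcal B_M}=\rho^M_{I_M,\mathcal A}$, which identifies $\rho_{K_P,\Delta_M}\big|_{\mathcal B_M}$ with the standard $\Delta_M$-distance $d_{\Delta_M}(o,\cdot)$; $K_M$-invariance is then immediate. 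The paper's argument has the virtue of directly recycling Lemma~\ref{rho_K_P_defined}, just proved; your argument is more geometric and makes transparent the compatibility with the intrinsic structure of $\mathcal B_M$, at the cost of isolating and justifying the restriction identity (which is straightforward, as you note, since any point of $\mathcal B_M$ lies in $g^{-1}\mathcal A$ for some $g\in I_M\subseteq I_P$).
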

\begin{proof}
First we show that $\rho_{K_P,\De_M}$ is $K_P$-invariant; this will show $\rho^{-1}_{K_P,\De_M}(x_\lambda) \supseteq K_P x_\lambda$.  Choose any $M$-alcove ${\bf a}_M$ and corresponding Iwahori $I_M$ as in the definition of $\rho_{K_P,\De_M}$ as $\rho_{\De_M} \circ \rho_{I_P,\A}$.  We can write $K_P = K_M N = I_M W_M I_M N$.  By Lemma \ref{GHKR2}(ii), $\rho_{I_P,\A}$ is $I_P$-invariant, and so it suffices to show that $\rho_{K_P,\De_M}$ is $W_M$-invariant.  On a bounded set $\mathcal S$, we realize this retraction as $\rho_{\De_M} \circ \rho_{\bf b, \A}$ for some alcove ${\bf b}$ contained in ${\bf a}_M$ and sufficiently anti-dominant with respect to the roots in $\Phi_N$.  Now for $w \in W_M$ and $v \in \mathcal S$, we have 
$$
\rho_{\De_M}\circ \rho_{\bf b,\A}(wv) = \rho_{\De_M}\big(w(\rho_{w^{-1}\bf b,\A}(v)) = \rho_{\De_M} \circ \rho_{w^{-1}\bf b,\A}(v)
$$
using Lemma \ref{rho_conj1} for the first equality.  But by Lemma \ref{rho_K_P_defined}, the right hand side is $\rho_{K_P,\De_M}(v)$, and the invariance is proved.

Next we show the opposite inclusion.  We have
$$
\rho^{-1}_{K_P,\De_M}(x_\lambda) = \cup_{w \in W_M} \rho^{-1}_{I_P,\A}(wx_\lambda).
$$
By Lemma \ref{GHKR2}(ii), the right hand side is contained in $I_PW_M x_\lambda$, which certainly belongs to $K_Px_\lambda$.  This completes the proof.
\end{proof}

We deduce the following interpolation between the Cartan and Iwasawa decompositions of $G$:

\begin{cor}
The map $\widetilde{W} \rightarrow G$ induces a bijection
$$
W_M\backslash \Lambda = W_M \backslash \widetilde{W}/W ~ \widetilde{\rightarrow} ~ K_P\backslash G/K.
$$ 
\end{cor}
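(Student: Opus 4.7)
The plan is to deduce the bijection $W_M\backslash \Lambda \ \widetilde{\rightarrow}\ K_P\backslash G/K$ directly from the preimage formula $\rho_{K_P,\Delta_M}^{-1}(x_\lambda) = K_P x_\lambda$ of Lemma \ref{preimage2}, treating the middle identification $W_M\backslash\widetilde{W}/W = W_M\backslash\Lambda$ as trivial (it follows at once from $\widetilde{W} = \Lambda\rtimes W$ together with the remark that $W_M\subset W$ acts on $\Lambda$ compatibly with the projection $\widetilde{W}\twoheadrightarrow\widetilde{W}/W = \Lambda$).

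For surjectivity of the map $\Lambda\to K_P\backslash G/K$, I would take an arbitrary $gK\in G/K$, identify it with the special vertex $g\cdot o$ in the orbit $G\cdot o\subset\B_G$, and apply the retraction $\rho_{K_P,\Delta_M}=\rho_{\Delta_M}\circ\rho_{I_P,\mathcal A}$. The key observation is that $\rho_{I_P,\mathcal A}$ is simplicial and type-preserving: indeed, by Lemma \ref{GHKR2}(i) it is locally given by the apartment isomorphism $g\colon g^{-1}\mathcal A\to\mathcal A$ for $g\in I_P$, which sends $o$-type vertices to $o$-type vertices. Since $G\cdot o\cap\mathcal A = T\cdot o = \{x_\nu : \nu\in\Lambda\}$, we conclude $\rho_{I_P,\mathcal A}(g\cdot o) = x_\nu$ for some $\nu\in\Lambda$, and the subsequent $W_M$-folding $\rho_{\Delta_M}$ produces $x_\lambda$ with $\lambda\in\Lambda\cap\Delta_M$. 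Lemma \ref{preimage2} then yields $gK\in K_P x_\lambda$.

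For injectivity, I would suppose $K_P x_\lambda = K_P x_{\lambda'}$ with $\lambda,\lambda'\in\Lambda$. Since $W_M\subset K_M\subset K_P$, both coweights may be replaced by their $W_M$-translates in $\Lambda\cap\Delta_M$, so assume $\lambda,\lambda'$ are $M$-dominant. Lemma \ref{preimage2} gives $x_{\lambda'}\in K_P x_\lambda = \rho_{K_P,\Delta_M}^{-1}(x_\lambda)$. But $\rho_{I_P,\mathcal A}$ fixes $\mathcal A$ pointwise, so $\rho_{K_P,\Delta_M}|_{\mathcal A} = \rho_{\Delta_M}$, which in turn fixes $\Delta_M$ pointwise. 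Applying this to $x_{\lambda'}\in\Delta_M$ gives $x_\lambda = \rho_{K_P,\Delta_M}(x_{\lambda'}) = x_{\lambda'}$, hence $\lambda=\lambda'$.

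The heart of the argument is Lemma \ref{preimage2} itself, which was already proved; the only nontrivial ingredient added here is the compatibility of $\rho_{K_P,\Delta_M}$ with the $o$-type vertex stratum of $\B_G$, and I expect this to be the most delicate point to state precisely, although it is essentially automatic from the simplicial definition of the retractions. Once these pieces are in hand, the corollary amounts to the assertion that the $K_P$-orbits on $G\cdot o$ are indexed by $W_M$-orbits on $\Lambda$, which interpolates between the Cartan decomposition ($W$-orbits on $\Lambda$) and the Iwasawa decomposition (trivial orbits on $\Lambda$) in the expected way.
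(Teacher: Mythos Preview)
Your proposal is correct and is exactly the natural elaboration of the paper's implicit argument: the paper gives no explicit proof, simply stating the corollary as a direct consequence of Lemma~\ref{preimage2}, and your two-step (surjectivity/injectivity) argument via the retraction is the right way to fill this in.

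One minor comment on the surjectivity step: your appeal to ``type-preservation'' works, but the cleanest route avoids any subtleties about types in non-semisimple buildings. From the decomposition $G = I_P\,\widetilde{W}\,I$ (quoted in the proof of Lemma~\ref{rho_b_A_indep}) and $I\subset K$, $W\cdot o = o$, one gets immediately $G\cdot o = I_P\cdot\{x_\nu : \nu\in\Lambda\}$; since $\rho_{I_P,\mathcal A}$ is $I_P$-invariant and fixes $\mathcal A$, this forces $\rho_{I_P,\mathcal A}(g\cdot o)\in\{x_\nu\}$ directly. This is equivalent to what you wrote but sidesteps the need to invoke a type function.
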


\medskip 
The next lemma is a rough comparison between the retractions $\rho_{K_P, \Delta_M}$ and $\rho_{-\nu, \Delta_G-\nu}$.  (Actually it concerns their restrictions to a given bounded subset $\mathcal S \subset \B_G$.)  It will be made much more precise in the next section.

\begin{lemma} \label{comparison1}
For any bounded set $\mathcal S \subset \B_G$, there are elements $\nu$ which are $M$-central and very $N$-dominant 
(depending on $\mathcal S$) such that
$$
\rho_{K_P, \Delta_M}|{\mathcal S} = \rho_{-\nu, \Delta_G-\nu}|{\mathcal S}.
$$
\end{lemma}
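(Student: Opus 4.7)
My plan is to reduce everything to the comparison of two retractions onto Weyl chambers inside $\A$, and then verify their agreement using the key properties of $\nu$ being $M$-central and very $N$-dominant.

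Choose $\nu$ which is $M$-central and sufficiently $N$-dominant, with the precise bound depending on $\mathcal S$, and pick any alcove $\mathbf{b} \subset \A$ with $x_{-\nu}$ as a vertex such that $\mathbf{b} \subset \mathbf{a}_M$; for instance, a suitable translate of the base alcove of $G$ by $-\nu$ works once $\nu$ is $N$-dominant enough. Then Lemma \ref{rho_b_A_indep} together with the definition of $\rho_{K_P,\De_M}$ gives
$$
\rho_{K_P,\De_M}|_{\mathcal S} = \rho_{\De_M} \circ \rho_{\mathbf{b},\A}|_{\mathcal S},
$$
while the definition (\ref{rho_nu_def}) yields
$$
\rho_{-\nu,\De_G-\nu}|_{\mathcal S} = \rho_{\De_G-\nu} \circ \rho_{\mathbf{b},\A}|_{\mathcal S}.
$$
It therefore suffices to show that $\rho_{\De_M}(v) = \rho_{\De_G-\nu}(v)$ for every $v$ in the bounded set $\mathcal S' := \rho_{\mathbf{b},\A}(\mathcal S) \subset \A$.

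The crux is a geometric observation: when $\nu$ is $M$-central and very $N$-dominant, the chambers $\De_M$ and $\De_G - \nu$ coincide on any prescribed bounded neighborhood of $o$. Indeed, $u \in \De_G - \nu$ iff $\langle \alpha, u\rangle \geq -\langle \alpha, \nu\rangle$ for all $\alpha \in \Phi^+$; the inequalities for $\alpha \in \Phi_M^+$ become $\langle \alpha, u\rangle \geq 0$ (by $M$-centrality), which are precisely the defining inequalities of $\De_M$, while the inequalities for $\alpha \in \Phi_N$ are automatic on any fixed bounded region once $\langle \alpha, \nu\rangle$ is large enough. Moreover, $M$-centrality forces $W_M$ to fix $x_{-\nu}$, so $W_M$ embeds into $W_{-\nu}$, and its elements act identically whether viewed as reflections around $o$ or around $x_{-\nu}$.

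Given this, for $v \in \mathcal S'$ write $\rho_{\De_M}(v) = w(v)$ with $w \in W_M$. Since $w$ fixes $o$, the point $w(v)$ remains in a bounded subset of $\A$ of size controlled by $\mathcal S'$, so by the coincidence above $w(v) \in \De_G - \nu$ provided $\nu$ is $N$-dominant enough. Viewing $w$ as an element of $W_{-\nu}$, the uniqueness of the retraction to $\De_G - \nu$ gives $\rho_{\De_G-\nu}(v) = w(v) = \rho_{\De_M}(v)$, as required. The main technical point is coordinating the choice of $\nu$ so that all the $N$-dominance bounds (for Lemma \ref{rho_b_A_indep}, for the existence of a suitable $\mathbf{b}$, and for the chamber coincidence on $\mathcal S'$) are met simultaneously; but since each condition asks only that $\nu$ lie sufficiently deeply in the $N$-dominant cone, a single $\nu$ suffices.
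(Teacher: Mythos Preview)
Your proof is correct and follows essentially the same approach as the paper's: both reduce to showing that $\rho_{\De_M}$ and $\rho_{\De_G-\nu}$ agree on the bounded image $\rho_{\mathbf{b},\A}(\mathcal S)$, using that for $\nu$ which is $M$-central and very $N$-dominant every point $x$ in this image satisfies $\langle \alpha, \nu + x\rangle > 0$ for $\alpha \in \Phi_N$, whence both retractions are achieved by a single element of $W_M$. Your version is simply more explicit about the embedding $W_M \hookrightarrow W_{-\nu}$ and the uniqueness of the chamber representative, and about why the bound on $\mathcal S'$ is independent of $\nu$ (so that no circularity arises in the choice of $\nu$).
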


\begin{proof}
First we remark that for $\nu$ which is $M$-central and very $N$-dominant, and for any alcove ${\bf b}$ having 
$x_{-\nu}$ as a vertex, every point $x \in \rho_{{\bf b}, \mathcal A}(\mathcal S)$ satisfies
$$
\langle \alpha, \nu + x \rangle > 0
$$
for $\alpha \in \Phi_N$.  Thus the retraction of such an element $x$ into $\Delta_G - \nu$ coincides with its retraction into $\Delta_M$ (and both are achieved by applying a suitable element of $W_M$).  The lemma follows from this remark and the definitions. 
\end{proof}

\section{Sharp comparison of $\rho_{K_P, \Delta_M}$ and $\rho_{-\nu, \Delta_G-\nu}$}  \label{sharp_sec}

\subsection{Statement of key proposition}

The following is the key technical device of this paper.  It is a much sharper version of Lemma \ref{comparison1}.

\begin{prop}  \label{key_prop}  Let $\mu$ be a $G$-dominant element of $X_*(T)$.  
Suppose $\nu \geq^P \mu$.  Then 
\begin{equation}
\rho_{-\nu, \Delta_G-\nu}|{\overline{K x_\mu}} = \rho_{K_P, \Delta_M}|{\overline{Kx_\mu}}.
\end{equation}
\end{prop}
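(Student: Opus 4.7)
The plan is to reduce the claimed equality of retractions on $\overline{Kx_\mu}$ to a single $\Delta_G$-distance identity and to verify this identity using the algebraic descriptions of the fibers of both retractions given by Lemmas \ref{preimage1} and \ref{preimage2}.

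First I would observe that both retractions send $\overline{Kx_\mu}$ into the common target $\Omega(\mu) \cap \Delta_M$. For $\rho_{K_P, \Delta_M}$, this follows from Lemma \ref{rho(x)_in_Omega(mu)} together with the $W_M$-invariance of $\Omega(\mu)$; for $\rho_{-\nu, \Delta_G - \nu}$, the image lies in $\Omega(\mu) \cap (\Delta_G - \nu)$, and the hypothesis $\nu \geq^P \mu$ forces this intersection to equal $\Omega(\mu) \cap \Delta_M$. Thus the proposition reduces to showing that for each $\lambda \in \Omega(\mu) \cap \Delta_M$, the two fibers over $x_\lambda$ agree on $\overline{Kx_\mu}$. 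Since $\nu + \lambda \in \Delta_G$ (again by $\nu \geq^P \mu$), Lemma \ref{preimage1} (whose proof works for any $\lambda$ with $\nu+\lambda \in \Delta_G$) identifies $\rho^{-1}_{-\nu, \Delta_G - \nu}(x_\lambda) = (t^{-\nu}Kt^\nu) x_\lambda$, while Lemma \ref{preimage2} gives $\rho^{-1}_{K_P, \Delta_M}(x_\lambda) = K_P x_\lambda$.

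Next, I would reformulate this fiber equality geometrically. The orbit $(t^{-\nu}Kt^\nu)\cdot x_\lambda$ is precisely the $\Delta_G$-sphere $S_{\nu+\lambda}(x_{-\nu})$ of radius $\nu+\lambda$ around $x_{-\nu}$, while $K_P \cdot x_\lambda$ is the level set of $b_{\xi,\Delta_M}$ at $\lambda$. Ranging over $\lambda \in \Omega(\mu) \cap \Delta_M$, the fiber equality for all $\lambda$ simultaneously is equivalent to the single identity
\[
d_{\Delta_G}(x_{-\nu},\, y) \;=\; \nu + b_{\xi, \Delta_M}(y) \qquad \text{for every } y \in \overline{Kx_\mu}.
\]

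The main obstacle is to establish this identity. My approach would be the following: given $y \in \overline{Kx_\mu}$, set $\lambda = b_{\xi, \Delta_M}(y) \in \Omega(\mu) \cap \Delta_M$ and write $y = n k \cdot x_\lambda$ with $n \in N$, $k \in K_M$. Since $\nu$ is $M$-central, $t^\nu$ commutes with $k$ and normalizes $N$, so $t^\nu y = n' k \cdot x_{\nu+\lambda}$ with $n' = t^\nu n t^{-\nu} \in N$. Applying $G$-invariance to pass from $d_{\Delta_G}(x_{-\nu}, y)$ to $d_{\Delta_G}(o, t^\nu y)$, and then $K$-invariance to absorb $k$ (using that $K_M$ normalizes $N$), reduces the identity to the assertion that a certain element $n'' \in N$ satisfies $n'' \cdot x_{\nu+\lambda} \in K \cdot x_{\nu+\lambda}$. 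The delicate point is that the constraint $y \in \overline{Kx_\mu}$ bounds the valuations of the root-subgroup coordinates of $n$, and the condition $\nu \geq^P \mu$ --- equivalently, $\langle \alpha, \nu + \lambda'\rangle \geq 0$ for all $\alpha \in \Phi_N$ and $\lambda' \in \Omega(\mu)$ --- is calibrated so that the valuation shift induced by conjugation by $t^\nu$ places $n''$ in the left $K$-coset of $\mathrm{Stab}_G(x_{\nu+\lambda})$. An alternative, more conceptual, geometric proof of the same identity --- via analysis of galleries along geodesic segments $\overline{oz}$ of $\Delta_G$-length $\mu$ --- is given in the Appendix (section \ref{appendix}).
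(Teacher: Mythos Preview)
Your reduction in the first two paragraphs is correct and essentially reproduces the paper's Lemmas \ref{image} and \ref{equiv1}: both retractions land in $\Omega(\mu)\cap\Delta_M$, and the proposition is equivalent to the fiber equality
\[
(t^{-\nu}Kt^{\nu})x_\lambda \cap \overline{Kx_\mu} \;=\; K_P x_\lambda \cap \overline{Kx_\mu}
\qquad(\lambda\in\Omega(\mu)\cap\Delta_M).
\]
The reformulation as the distance identity $d_{\Delta_G}(x_{-\nu},y)=\nu+b_{\xi,\Delta_M}(y)$ is also a valid restatement.

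The gap is in your third paragraph. After the conjugation step you arrive at the assertion that if $\tilde n\in N$ satisfies $\tilde n\,x_\lambda\in\overline{Kx_\mu}$, then $(t^\nu\tilde n\,t^{-\nu})\,x_{\nu+\lambda}\in Kx_{\nu+\lambda}$. You then say that ``the constraint $y\in\overline{Kx_\mu}$ bounds the valuations of the root-subgroup coordinates of $n$'' and that $\nu\geq^P\mu$ is ``calibrated'' to make this work. But this is exactly the content of the proposition, and you have not supplied the argument. The difficulty is real: the variety $Nx_\lambda\cap\overline{Kx_\mu}$ has no simple description in terms of valuations of root-subgroup coordinates of $\tilde n$ (it is a Mirkovi\'c--Vilonen type intersection), so there is no evident way to read off from $\tilde n\,x_\lambda\in\overline{Kx_\mu}$ the bound ${\rm val}(c_\alpha)\geq -\langle\alpha,\nu\rangle$ (or any weaker bound that would force $t^\nu\tilde n\,t^{-\nu}$ into the correct $K$-double coset). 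Your sketch does not indicate how to extract such bounds, and I do not see a direct route.

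The paper does \emph{not} prove the fiber equality by a valuation computation. Instead it proves the stronger statement (Proposition \ref{better_prop}) that for any two cocharacters $\nu_1,\nu_2\geq^P\mu$ one has $\rho_{-\nu_1,\Delta_G-\nu_1}=\rho_{-\nu_2,\Delta_G-\nu_2}$ on $\overline{Kx_\mu}$; then taking $\nu_2$ very $N$-dominant and invoking Lemma \ref{comparison1} yields the proposition. The proof of Proposition \ref{better_prop} (via Proposition \ref{alcove_prop}) is a gallery induction: one follows a minimal gallery ${\bf a}={\bf a}_0,\dots,{\bf a}_r$ from the base alcove into $\overline{Kx_\mu}$ and shows, wall by wall, that the two retractions $\rho_{{\bf b}_i,\mathcal A}$ (for ${\bf b}_i=w_0{\bf a}-\nu_i$) agree on each ${\bf a}_j$. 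The key point is that $\nu\geq^P\mu$ forces ${\bf b}_1$ and ${\bf b}_2$ to lie on the same side of every hyperplane $H$ meeting ${\rm Conv}(W\mu)$, so they are simultaneously in ``Position 1'' or ``Position 2'' relative to $H$; a further argument using $J_P$-positive galleries handles Position 2. This combinatorial control over wall-crossings is what replaces the valuation bounds you were hoping for.
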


The first lemma deals with the images of the two retractions appearing in Proposition \ref{key_prop}.

\begin{lemma} \label{image} Assume $\nu \geq^P \mu$.  Then the following statements hold.
\begin{enumerate}
\item[(a)] We have $\Omega(\mu) \cap \Delta_M = \Omega(\mu) \cap (\Delta_G - \nu)$.
\item[(b)] For $\lambda \in \Delta_M$, the intersection $K_P x_\lambda \cap \overline{Kx_\mu}$ is nonempty only if 
$\la\in \Omega(\mu)$.
\item[(c)] For $\lambda \in \Delta_G-\nu$, the intersection $(t^{-\nu}Kt^\nu)x_\lambda \cap \overline{Kx_\mu}$ is nonempty only if $\lambda \in \Omega(\mu)$.
\end{enumerate}
\end{lemma}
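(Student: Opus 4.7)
All three parts hinge on tracking a point of $\overline{Kx_\mu}$ under a chamber-valued retraction and arranging for the image to lie in $\Omega(\mu)$.  For (a), my plan is a direct computation from the definitions.  Decomposing $\Phi^+=\Phi_M^+\sqcup\Phi_N$: for $\beta\in\Phi_M^+$ the $M$-centrality of $\nu$ forces $\langle \beta,\lambda+\nu\rangle=\langle \beta,\lambda\rangle$, so the $\Phi_M^+$-dominance of $\lambda+\nu$ is equivalent to $\lambda\in\De_M$; for $\beta\in\Phi_N$ and $\lambda\in\Omega(\mu)$, the inequality $\langle \beta,\lambda+\nu\rangle\ge 0$ is precisely the hypothesis $\nu\ge^P\mu$.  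Combining these yields $\Omega(\mu)\cap\De_M=\Omega(\mu)\cap(\De_G-\nu)$.

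For (b), let $y\in K_Px_\lambda\cap\overline{Kx_\mu}$.  By Lemma \ref{preimage2}, $\rho_{K_P,\De_M}(y)=x_\lambda$.  Factoring $\rho_{K_P,\De_M}=\rho_{\De_M}\circ\rho_{I_P,\A}$, and using Lemma \ref{rho_b_A_indep} to realize $\rho_{I_P,\A}$ on the bounded set $\overline{Kx_\mu}$ as $\rho_{{\bf b},\A}$ for a suitable alcove ${\bf b}$, Lemma \ref{rho(x)_in_Omega(mu)} gives $\rho_{I_P,\A}(y)\in\Omega(\mu)$.  Because $\rho_{\De_M}$ folds through $W_M\subset W$ and $\Omega(\mu)$ is $W$-stable, we conclude $\lambda\in\Omega(\mu)$.

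The substantive case is (c).  I would factor $\rho_{-\nu,\De_G-\nu}=\rho_{\De_G-\nu}\circ\rho_{{\bf b},\A}$ with ${\bf b}$ having vertex $x_{-\nu}$.  A short calculation from \eqref{rho_conj2} shows that every $y\in(t^{-\nu}Kt^\nu)x_\lambda$ satisfies $\rho_{-\nu,\De_G-\nu}(y)=x_\lambda$, and Lemma \ref{rho(x)_in_Omega(mu)} gives $\eta:=\rho_{{\bf b},\A}(y)\in\Omega(\mu)$ whenever $y\in\overline{Kx_\mu}$.  The main obstacle here is that $\rho_{\De_G-\nu}$ acts through the affine finite Weyl group $W_{-\nu}$ at $x_{-\nu}$, which does \emph{not} preserve $\Omega(\mu)$ in general: writing a generic element of $W_{-\nu}$ as $v\mapsto\bar w(v+\nu)-\nu$, it translates $\Omega(\mu)$ by $\bar w\nu-\nu$.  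So one cannot conclude $\lambda\in\Omega(\mu)$ directly as in (b).

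To fix this, I would exploit $\nu\ge^P\mu$ together with part (a) to show that on $\Omega(\mu)$ the retraction $\rho_{\De_G-\nu}$ is already implemented by $W_M$.  Choose $w\in W_M$ with $w\eta\in\De_M$; by $W$-stability $w\eta\in\Omega(\mu)\cap\De_M$, and by (a) this set equals $\Omega(\mu)\cap(\De_G-\nu)$, so $w\eta\in\De_G-\nu$.  Since $\nu$ is $M$-central, every element of $W_M$ fixes $\nu$ and hence $x_{-\nu}$, so $W_M\subseteq W_{-\nu}$.  By uniqueness of the $W_{-\nu}$-orbit representative inside $\De_G-\nu$, $\rho_{\De_G-\nu}(\eta)=w\eta\in\Omega(\mu)$, and therefore $\lambda\in\Omega(\mu)$, completing (c).
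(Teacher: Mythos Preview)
Your proof is correct and follows essentially the same strategy as the paper: part (a) is the direct check the paper calls ``easy from the definitions,'' and your argument for (c) --- pushing $\eta\in\Omega(\mu)$ into $\De_M$ by $W_M$, invoking (a) to land in $\De_G-\nu$, and using $W_M\subseteq W_{-\nu}$ plus uniqueness of the chamber representative --- is the same mechanism the paper uses, just packaged via (a) rather than redoing that computation inline. For (b) the paper simply cites \cite{GHKR1}, whereas you supply a clean self-contained proof through the factorization $\rho_{K_P,\De_M}=\rho_{\De_M}\circ\rho_{I_P,\A}$ and Lemma~\ref{rho(x)_in_Omega(mu)}; this is a nice bonus and entirely valid.
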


\begin{proof}
Part (a) follows easily from the definitions.  Part (b) is well-known (cf. \cite{GHKR1}, Lemma 5.4.1).  Let us prove (c).  Assume $\lambda \in \Delta_G -\nu$ makes the intersection in (c) nonempty.  By Lemma \ref{preimage1}, we see that $x_\lambda$ lies in $\rho_{-\nu, \Delta_G-\nu}(\overline{K\mu})$.  By Lemma \ref{rho(x)_in_Omega(mu)} and the definition of $\rho_{-\nu, \Delta_G - \nu}$, there exists $w_{-\nu} \in W_{-\nu}$ such that $w_{-\nu}(\lambda) \in \Omega(\mu)$.  Write $w_{-\nu} = t^{-\nu}w t^\nu$ for some $w \in W$.   Then we see that 
\begin{equation} \label{w(lambda+nu)}
w(\lambda + \nu) = w(\tilde{\lambda}) + \nu
\end{equation}
for some $\tilde{\lambda} \in \Omega(\mu)$.  The equation (\ref{w(lambda+nu)}) has the same form if we apply any element $w' \in W_M$ to it.  By replacing $w$ with a suitable element of the form $w'w$ ($w' \in W_M$), we may assume the right hand side of (\ref{w(lambda+nu)}) is M-dominant.  But then since $\nu \geq^P \mu$, the right hand side is $G$-dominant. Then $w(\lambda + \nu)$ and $\lambda + \nu$ are both $G$-dominant, hence they coincide.  This yields $\lambda = w(\tilde{\lambda})$; thus $\lambda$ belongs to $\Omega(\mu)$, as desired.    
\end{proof}

We will rephrase Proposition \ref{key_prop} using the following lemma.

\begin{lemma} \label{equiv1}
Fix a $G$-dominant cocharacter $\mu$.  Then the following are equivalent conditions on an element $\nu$ satisfying $\nu \geq^P \mu$:
\begin{enumerate}
\item[(i)] $(t^{-\nu}Kt^\nu)x_\lambda \cap \overline{Kx_\mu} = K_P x_\lambda \cap \overline{Kx_\mu}$, for all $\lambda \in \Omega(\mu) \cap \Delta_M$;
\item[(ii)] $\rho_{-\nu, \Delta_G-\nu}|{\overline{Kx_\mu}} = \rho_{K_P, \Delta_M}|{\overline{Kx_\mu}}$.
\end{enumerate}
\end{lemma}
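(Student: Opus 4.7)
The plan is to derive the equivalence by a matching-of-fibers argument, after exploiting the preimage formulas of Lemmas \ref{preimage1} and \ref{preimage2} together with the description of images from Lemma \ref{image}.

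First I would check that both restricted retractions $\rho_{K_P,\Delta_M}|_{\overline{Kx_\mu}}$ and $\rho_{-\nu,\Delta_G-\nu}|_{\overline{Kx_\mu}}$ actually take values in the common set $\Omega(\mu)\cap\Delta_M = \Omega(\mu)\cap(\Delta_G-\nu)$, where the equality comes from Lemma \ref{image}(a). For $\rho_{K_P,\Delta_M}$ this follows by combining Lemma \ref{preimage2} with Lemma \ref{image}(b); for $\rho_{-\nu,\Delta_G-\nu}$ it follows by combining Lemma \ref{preimage1} with Lemma \ref{image}(c). This places both restrictions into a common codomain, so the equality demanded in (ii) is meaningful.

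Next, for each $\lambda \in \Omega(\mu)\cap\Delta_M$, the hypothesis $\nu \geq^P \mu$ ensures that $\nu+\lambda$ is $G$-dominant, which is precisely the hypothesis needed to apply Lemma \ref{preimage1}. Together with Lemma \ref{preimage2} this yields
$$
\rho^{-1}_{-\nu,\Delta_G-\nu}(x_\lambda)=(t^{-\nu}Kt^\nu)x_\lambda, \qquad \rho^{-1}_{K_P,\Delta_M}(x_\lambda)=K_P x_\lambda.
$$
Intersecting with $\overline{Kx_\mu}$ therefore identifies the fibers of the two restricted retractions over $x_\lambda$ with the two sides of the equation appearing in (i).

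Finally I would invoke the elementary principle that two maps between the same sets are equal if and only if they have the same fibers over every point of the common codomain. Applied to our two retractions, now viewed with the common codomain $\Omega(\mu)\cap\Delta_M$, this yields the equivalence (i) $\Leftrightarrow$ (ii) immediately. The argument is entirely formal once the codomains are aligned and the preimages computed; the only real obstacle is ensuring the compatibility of the two targets $\Delta_M$ and $\Delta_G-\nu$, but that is exactly what is resolved in the first step via Lemma \ref{image}(a) and the $\geq^P$ hypothesis.
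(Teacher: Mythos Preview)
Your proposal is correct and follows essentially the same approach as the paper, which simply states that the equivalence is ``immediate in view of Lemmas \ref{preimage1}, \ref{preimage2}, and \ref{image}.'' You have accurately unpacked how these three lemmas combine: Lemma \ref{image} aligns the two codomains over $\overline{Kx_\mu}$, Lemmas \ref{preimage1} and \ref{preimage2} identify the fibers with the orbit intersections in (i), and the equivalence then reduces to the elementary fact that two maps agree iff their fibers coincide.
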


\begin{proof}
This is immediate in view of Lemmas \ref{preimage1}, \ref{preimage2}, and \ref{image}.
\end{proof}

In a similar way, we have the following result.

\begin{lemma} \label{equiv2} The following are equivalent conditions on cocharacters $\nu_1$ and $\nu_2$ which satisfy $\nu_i \geq^P \mu$ for $i=1,2$:
\begin{enumerate}
\item[(a)] $(t^{-\nu_1}Kt^{\nu_1})x_{\lambda} \cap \overline{Kx_\mu} = (t^{-\nu_2}Kt^{\nu_2})x_{\lambda} \cap \overline{Kx_\mu}$ for all $\lambda \in \Omega(\mu) \cap \Delta_M$;
\item[(b)] $\rho_{-\nu_1, \Delta_G-\nu_1}|{\overline{Kx_\mu}} = \rho_{-\nu_2, \Delta_G-\nu_2}|{\overline{Kx_\mu}}$;
\item[(c)] $\rho_{{\bf b}_1,\mathcal A}(x) \in W_M(\rho_{{\bf b}_2,\mathcal A}(x))$, 
for $x \in \overline{Kx_\mu}$.
\end{enumerate}
Here for $i=1,2$, ${\bf b}_i$ is any alcove having $x_{-\nu_i}$ as a vertex. 
\end{lemma}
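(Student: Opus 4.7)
The plan is to follow the same pattern as the proof of Lemma \ref{equiv1}, combining the preimage formula of Lemma \ref{preimage1} with the image containments of Lemma \ref{image}, and then pushing one step further by factoring $\rho_{-\nu_i,\Delta_G-\nu_i}$ through $\rho_{\Delta_M}$ on the subset $\overline{Kx_\mu}$. I would first establish (a) $\Leftrightarrow$ (b), which is essentially a restatement of Lemma \ref{equiv1}: since $\nu_i \geq^P \mu$, for every $\lambda \in \Omega(\mu) \cap \Delta_M$ the cocharacter $\nu_i+\lambda$ is $G$-dominant, so Lemma \ref{preimage1} gives $\rho^{-1}_{-\nu_i,\Delta_G-\nu_i}(x_\lambda) = (t^{-\nu_i}Kt^{\nu_i})x_\lambda$; meanwhile Lemma \ref{image}(a),(c) shows that each of the retractions $\rho_{-\nu_i,\Delta_G-\nu_i}$ maps $\overline{Kx_\mu}$ into the common set $\Omega(\mu)\cap \Delta_M$. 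Equality of the retractions on $\overline{Kx_\mu}$ is therefore equivalent to equality of their fibers over each such $x_\lambda$ intersected with $\overline{Kx_\mu}$, which is condition (a).

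The heart of the proof is the equivalence (b) $\Leftrightarrow$ (c), which I would obtain by proving the identity
$$
\rho_{-\nu_i,\Delta_G-\nu_i}(x) \;=\; \rho_{\Delta_M}\!\bigl(\rho_{{\bf b}_i,\mathcal A}(x)\bigr), \qquad x\in\overline{Kx_\mu},\ i=1,2.
$$
To verify this, set $y := \rho_{{\bf b}_i,\mathcal A}(x)$. By Lemma \ref{rho(x)_in_Omega(mu)} we have $y\in \Omega(\mu)$, so $y = w'(\lambda)$ for some $w'\in W_M$ and $\lambda \in \Omega(\mu)\cap \Delta_M$. Since $\nu_i$ is $M$-central ($\langle \alpha,\nu_i\rangle = 0$ for $\alpha \in \Phi_M$), $w'$ fixes $\nu_i$; and since $\nu_i \geq^P \mu$, $\lambda+\nu_i$ is $G$-dominant. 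Thus the element $t^{-\nu_i}w'^{-1}t^{\nu_i} \in W_{-\nu_i}$ sends $y$ to $w'^{-1}(y+\nu_i)-\nu_i = \lambda$, which lies in $\Delta_G-\nu_i$. Hence $\rho_{\Delta_G-\nu_i}(y) = \lambda = \rho_{\Delta_M}(y)$, and the definition (\ref{rho_nu_def}) delivers the displayed identity.

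With this identity in hand, condition (b) becomes $\rho_{\Delta_M}(\rho_{{\bf b}_1,\mathcal A}(x)) = \rho_{\Delta_M}(\rho_{{\bf b}_2,\mathcal A}(x))$ for all $x\in\overline{Kx_\mu}$, and two points of $\mathcal A$ have the same image under $\rho_{\Delta_M}$ precisely when they lie in a common $W_M$-orbit, which is condition (c). The only real obstacle is the displayed identity above; once one sees that the $M$-centrality of $\nu_i$ combined with $\nu_i\geq^P\mu$ forces the $W_{-\nu_i}$-retraction of an element of $\Omega(\mu)$ to coincide with its $W_M$-retraction into $\Delta_M$, the rest is routine bookkeeping with Lemmas \ref{preimage1}, \ref{image}, and \ref{rho(x)_in_Omega(mu)}.
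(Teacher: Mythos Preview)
Your proposal is correct and follows essentially the same approach as the paper. The paper disposes of (a)\,$\Leftrightarrow$\,(b) by saying it is ``clear'' (i.e., by the same reasoning as Lemma~\ref{equiv1}) and derives (b)\,$\Leftrightarrow$\,(c) by invoking ``the same argument which proved Lemma~\ref{comparison1}''; your displayed identity $\rho_{-\nu_i,\Delta_G-\nu_i}(x)=\rho_{\Delta_M}(\rho_{{\bf b}_i,\mathcal A}(x))$ on $\overline{Kx_\mu}$ is exactly that argument, spelled out with the hypothesis $\nu_i\geq^P\mu$ in place of ``very $N$-dominant''.
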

 As discussed above in the context of $\rho_{-\nu, \Delta_G-\nu}$, in proving (c) we are free to use any alcove having $x_{-\nu_i}$ as a vertex we wish.

\begin{proof}
The equivalence of (a) and (b) is clear.  The equivalence of (b) and (c) follows by the same argument which proved Lemma \ref{comparison1}.
\end{proof}

If (a),(b), or (c) hold, then so does (ii) in Lemma \ref{equiv1}, by virtue of Lemma \ref{comparison1}.  To see this,  in (b) above, take $\nu = \nu_1$ and take $\nu_2 \geq^P \mu$ sufficiently dominant with respect to $N$ so that, for $\mathcal S = \overline{Kx_\mu}$, the conclusion of Lemma \ref{comparison1} holds for it.

Thus, the following proposition will imply Proposition \ref{key_prop} (and in fact it is equivalent to Proposition \ref{key_prop}).

\begin{prop} \label{better_prop}
Suppose $\nu_i \geq^P \mu$ for $i = 1,2$.  Then
$$
\rho_{-\nu_1, \Delta_G-\nu_1}|{\overline{Kx_\mu}} = \rho_{-\nu_2, \Delta_G-\nu_2}|{\overline{Kx_\mu}}.
$$
\end{prop}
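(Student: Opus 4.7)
Proposition \ref{better_prop} follows immediately from Proposition \ref{key_prop}: applying the latter to each $\nu_i$ ($i=1,2$) gives $\rho_{-\nu_i,\De_G-\nu_i}|\overline{Kx_\mu} = \rho_{K_P,\De_M}|\overline{Kx_\mu}$, and the two instances combine to yield the result. Thus the real task is Proposition \ref{key_prop} for any $\nu \geq^P \mu$. Note that when $\nu$ is very $N$-dominant and $M$-central the equality is already Lemma \ref{comparison1}; the content is the extension to all $\nu$ satisfying merely the weaker condition $\nu \geq^P \mu$.

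For Proposition \ref{key_prop} I would use Lemma \ref{equiv1}(i) to rephrase the retraction equality as the orbit identity
$$
(t^{-\nu}Kt^\nu)x_\la \cap \overline{Kx_\mu} = K_P x_\la \cap \overline{Kx_\mu}
$$
for every $\la \in \Omega(\mu)\cap\De_M$. Given $y = n k x_\la$ in the right-hand side with $n \in N$ and $k \in K_M$, the $M$-centrality of $\nu$ lets me commute $t^\nu$ past $k$, so checking $y \in (t^{-\nu}Kt^\nu)x_\la$ reduces to the single claim that the element $n' := t^\nu n t^{-\nu} \in N$ sends $x_{\nu+\la}$ to a point of Cartan type $\nu+\la$; the hypothesis $\nu \geq^P \mu$ ensures $\nu+\la \in \De_G$, so $x_{\nu+\la}$ is a $G$-dominant vertex of ${\rm Gr}^G$. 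The reverse inclusion is handled analogously.

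My preferred route to this step goes through the retraction formulation of Lemma \ref{equiv2}(c): to compare $\rho_{{\bf b}_1,\A}(x)$ and $\rho_{{\bf b}_2,\A}(x)$ for alcoves ${\bf b}_i$ with vertex $x_{-\nu_i}$, I would deform ${\bf b}_1$ into ${\bf b}_2$ through a gallery of alcoves in $\A$ and track $\rho_{{\bf b},\A}(x)$ along the way using the ``Position 1''/``Position 2'' dichotomy reviewed in section \ref{rho_review}. By Lemma \ref{rho(x)_in_Omega(mu)}, the retraction stays inside $\Omega(\mu)$ throughout.

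\textbf{Main obstacle.} The principal obstacle is to show that every effective wall-crossing (a ``Position 2'' event encountered by the gallery) is a reflection lying in $W_M$; this is where the precise strength of $\nu \geq^P \mu$ must enter. The inequalities $\langle\alpha,\nu+\la\rangle \geq 0$ for all $\alpha \in \Phi_N$ and $\la \in \Omega(\mu)$ should force every wall-crossing through an affine hyperplane $H_{\alpha,k}$ with $\alpha \in \Phi_N$ to occur on the non-flip (``Position 1'') side, leaving only $\Phi_M$-walls, whose reflections lie in $W_M$, to contribute nontrivially. I expect this combinatorial wall-analysis, matching the definition of $\geq^P$ against the possible folding patterns of a gallery between $x_{-\nu_1}$ and $x_{-\nu_2}$, to be the main technical content.
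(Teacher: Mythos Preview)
Your reduction of Proposition~\ref{better_prop} to Proposition~\ref{key_prop} is logically fine but inverts the paper's flow: the paper proves Proposition~\ref{better_prop} directly (via Proposition~\ref{alcove_prop}) and derives Proposition~\ref{key_prop} by taking $\nu_2$ very $N$-dominant and invoking Lemma~\ref{comparison1}. Either way you must still establish Lemma~\ref{equiv2}(c) for arbitrary $\nu_1,\nu_2 \geq^P \mu$, and that is where the work lies. Your algebraic paragraph also stops short: knowing $\nu+\la \in \De_G$ tells you $x_{\nu+\la}$ is a dominant vertex, but gives no reason why the conjugated unipotent $n' = t^\nu n t^{-\nu}$ should preserve its $K$-orbit; the constraint $nkx_\la \in \overline{Kx_\mu}$ must be used, and you do not say how.

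Your preferred route --- deform ${\bf b}$ along a gallery in $\A$ from ${\bf b}_1$ to ${\bf b}_2$ --- is genuinely different from the paper's and has a gap. The Position~1/Position~2 dichotomy of section~\ref{rho_review} is the wrong tool: it describes how $\rho_{{\bf b},\A}({\bf x})$ changes with the last step of a gallery {\em to} ${\bf x}$, with ${\bf b}$ fixed. There {\em is} a dichotomy for adjacent base alcoves (implicit in the proof of Lemma~\ref{rho_K_P_defined}: $\rho_{{\bf b}',\A}(x) \in \{\rho_{{\bf b},\A}(x),\, s_H\rho_{{\bf b},\A}(x)\}$), but whether the flip occurs depends on whether a minimal gallery from $x$ to the facet ${\bf b}\cap{\bf b}'$ ends inside $\A$. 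Intermediate alcoves of your gallery need not have any vertex of the form $x_{-\nu}$ with $\nu \geq^P \mu$, so the hypothesis gives no direct handle on this criterion step by step; you have not supplied the mechanism ruling out $\Phi_N$-flips.

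The paper instead fixes ${\bf b}_i = w_0{\bf a} - \nu_i$ and inducts on a minimal gallery ${\bf a}={\bf a}_0,\ldots,{\bf a}_r$ from the base alcove to $x$, proving $\rho_1({\bf a}_j)=\rho_2({\bf a}_j)$ at each step (Proposition~\ref{alcove_prop}). The role of $\nu_i \geq^P \mu$ is this: every hyperplane $H$ crossed by such a gallery meets ${\rm Conv}(W\mu)$, and the defining inequalities then force $x_{-\nu_1}$ and $x_{-\nu_2}$ to lie on the same side of $H$; hence both retractions are simultaneously in Position~1 or Position~2 at each wall. Position~1 is immediate. Position~2 still leaves a binary choice, resolved by introducing the auxiliary alcove ${\bf b} = w_0{\bf a} - \nu_1 - \nu_2$ and showing, via $J_P$-positive galleries (Lemma~\ref{J_P-pos}), that the two concatenated minimal galleries from ${\bf b}$ through each ${\bf b}_i$ to the relevant facet end at the same alcove. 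This last argument is the ingredient your outline lacks.
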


\subsection{Proof of Proposition \ref{better_prop}}

We will prove (c) in Lemma \ref{equiv2} holds.  We make particular choices for the ${\bf b}_i$, namely, we set
$$
{\bf b}_i := w_0{\bf a} - \nu_i
$$
for $i = 1,2$, where $w_0$ denotes the longest element in the Weyl group $W$ (so that $w_0{\bf a}$ is the alcove at the origin which is in the {\em anti-dominant} Weyl chamber).  Set $\rho_i = \rho_{{\bf b}_i, \mathcal A}$ for $i=1,2$.  For these choices of ${\bf b}_i$, we will prove the following more precise fact:
$$
\mbox{for $x \in \overline{Kx_\mu}$, we have $\rho_1(x) = \rho_2(x)$}.
$$
Let $x \in \overline{Kx_\mu}$ and let ${\bf a}\!=\!{\bf a}_0, {\bf a}_1, \dots, {\bf a}_{r-1}, {\bf a}_r$ be any minimal gallery in ${\mathcal B}_G$ joining ${\bf a}$ to $x$ (this means that $x$ belongs to the closure of ${\bf a}_r$ and $r$ is minimal with this property).  It is obviously enough for us to prove that
$$
\rho_1({\bf a}_r) = \rho_2({\bf a}_r).
$$
We will prove this by induction on $r$.  But first, we must formulate an alcove-theoretic proposition (Proposition \ref{alcove_prop} below).  This involves the notion of $\mu$-admissible alcove (cf.~\cite{HN}, \cite{KR}).  By definition, an alcove in $\mathcal A$ is $\mu$-{\em admissible} provided it can be written in the form $w{\bf a}$ for $w \in \widetilde{W}$ such that $w \leq t_\lambda$ for some $\lambda \in W\mu$.  Here $\leq$ is the Bruhat order on $\widetilde{W}$ determined by the alcove ${\bf a}$.

The set of $\mu$-admissible alcoves is closed under the Bruhat order on any given apartment: if ${\bf a}_r$ is $\mu$-admissible, and ${\bf a}_{r-1}$ precedes ${\bf a}_r$, then ${\bf a}_{r-1}$ is also $\mu$-admissible.  Moreover, if $x \in \Omega(\mu)$, then the minimal length alcove containing $x$ in its closure is always $\mu$-admissible.  These remarks imply that the next proposition suffices to prove Proposition \ref{better_prop}.

\begin{prop} \label{alcove_prop}
Suppose ${\bf a}\!=\!{\bf a_0},{\bf a}_1, \dots, {\bf a}_{r}$ is any minimal gallery in ${\mathcal B}_G$ such that, in an apartment $\mathcal A'$ containing this gallery, the terminal alcove ${\bf a}_r$ (and thus every other alcove ${\bf a}_i$) is a $\mu$-admissible alcove in $\mathcal A'$.   Then we have
$$\rho_1({\bf a}_r) = \rho_2({\bf a}_r).$$
\end{prop}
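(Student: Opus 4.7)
The plan is to proceed by induction on the gallery length $r$. The base case $r=0$ is immediate since $\rho_1({\bf a}) = \rho_2({\bf a}) = {\bf a}$. For the inductive step, suppose that $\rho_1({\bf a}_{r-1}) = \rho_2({\bf a}_{r-1}) =: {\bf c}'$, and let $F_0$ denote the codimension-one facet of $\mathcal A'$ separating ${\bf a}_{r-1}$ from ${\bf a}_r$. Since the two retractions agree on ${\bf a}_{r-1}$, they also agree on $F_0$, so their common image lies on a single hyperplane $H = H_{\alpha, k} \subset \mathcal A$. By the Position 1 / Position 2 dichotomy recalled in Section \ref{rho_review}, the alcove $\rho_i({\bf a}_r)$ is determined by the triple consisting of ${\bf c}'$, $H$, and the connected component of $\mathcal A \setminus H$ containing ${\bf b}_i$, together with data (such as whether ${\bf a}_r$ itself lies in $\mathcal A$) that is independent of $i$. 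Consequently it suffices to show that ${\bf b}_1$ and ${\bf b}_2$ lie in the same connected component of $\mathcal A \setminus H$.

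Next I would observe that $\rho_i({\bf a}_{r-1})$, being the image under $\rho_i$ of a $\mu$-admissible alcove reachable from ${\bf a}$ by a minimal gallery, is itself $\mu$-admissible: retracting a minimal gallery can only decrease (with respect to the Bruhat order on $\widetilde W$) the element corresponding to its terminal alcove, and the set of $\mu$-admissible elements is closed downward in this order. Since $H$ is a wall of $\rho_i({\bf a}_{r-1})$, the integer $k$ therefore satisfies
$$
\min_{\lambda \in \Omega(\mu)} \<\alpha, \lambda\> \leq k \leq \max_{\lambda \in \Omega(\mu)} \<\alpha, \lambda\>.
$$

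Given this bound, the proof concludes with a three-case analysis driven by the hypothesis $\nu_i \ge^P \mu$ and the specific choice ${\bf b}_i = w_0{\bf a} - \nu_i$. First, if $\alpha \in \Phi_M$, then $\<\alpha, \nu_1\> = \<\alpha, \nu_2\> = 0$, so $\<\alpha, {\bf b}_1\> = \<\alpha, {\bf b}_2\> = \<\alpha, w_0{\bf a}\>$ and the two alcoves occupy the same $\alpha$-strip. Second, if $\alpha \in \Phi_N$, then $\<\alpha, \nu_i + \lambda\> \ge 0$ for every $\lambda \in \Omega(\mu)$ yields $\<\alpha, -\nu_i\> \leq \min_\lambda \<\alpha, \lambda\> \leq k$; combined with $\<\alpha, w_0{\bf a}\> < 0$ (since $\alpha$ is positive and $w_0{\bf a}$ is the antidominant alcove at the origin), this gives $\<\alpha, {\bf b}_i\> < k$ strictly for both $i$. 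Third, if $-\alpha \in \Phi_N$, the symmetric argument yields $\<\alpha, {\bf b}_i\> > k$ strictly for both $i$. In every case ${\bf b}_1$ and ${\bf b}_2$ lie on the same side of $H$, and the induction closes.

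The main technical obstacle I anticipate is the twin claim that (a) the retracted alcove $\rho_i({\bf a}_{r-1})$ genuinely inherits $\mu$-admissibility, and (b) every wall $H_{\alpha,k}$ of a $\mu$-admissible alcove satisfies the tight bound $\min_\lambda \<\alpha,\lambda\> \leq k \leq \max_\lambda \<\alpha,\lambda\>$. Both are essentially consequences of standard results on admissible sets (as in Kottwitz--Rapoport and Haines--Ngo), but the geometry must be set up carefully at the extreme cases where $k$ attains these endpoints; it is precisely there that the strict inequalities coming from $\<\alpha, w_0{\bf a}\> \ne 0$ are needed to separate ${\bf b}_i$ from $H$.
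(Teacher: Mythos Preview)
Your inductive setup and your argument that ${\bf b}_1$ and ${\bf b}_2$ lie on the same side of $H$ are correct and essentially match the paper. However, there is a genuine gap: you assert that $\rho_i({\bf a}_r)$ is determined by ${\bf c}'$, $H$, the side of $H$ containing ${\bf b}_i$, and ``data independent of $i$.'' In Position~2 this is precisely what remains to be proved. When ${\bf b}_i$ and ${\bf c}'$ lie on opposite sides of $H$, exactly one alcove among those sharing the facet $F_0$ with ${\bf a}_{r-1}$ retracts under $\rho_i$ to ${\bf c}$, while all others retract to ${\bf c}'$. That distinguished alcove is the terminal alcove of any minimal gallery from ${\bf b}_i$ to $F_0$, and it depends a priori on the alcove ${\bf b}_i$ itself, not merely on which half-space of $H$ contains ${\bf b}_i$. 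Your suggested criterion (whether ${\bf a}_r$ lies in $\mathcal A$) does not settle this: when ${\bf a}_{r-1} \notin \mathcal A$ the distinguished alcove need not lie in $\mathcal A$ either, and there is no reason the two distinguished alcoves for $i=1,2$ should coincide.

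The paper confronts this directly. It introduces the auxiliary alcove ${\bf b} := w_0{\bf a} - \nu_1 - \nu_2$, builds minimal galleries from each ${\bf b}_i$ to $F_0$, and shows (using the notion of galleries in the $J_P$-positive direction, developed for this purpose) that their concatenations with minimal galleries from ${\bf b}$ to ${\bf b}_i$ inside $\mathcal A$ remain minimal. Both concatenations are then minimal galleries from ${\bf b}$ to $F_0$; by the standard fact that any two such galleries lie in every apartment containing ${\bf b}$ and $F_0$, their terminal alcoves coincide. Thus the distinguished alcove for $\rho_1$ equals that for $\rho_2$, and only then does the Position~2 case close. Without this extra argument your induction does not go through.
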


\begin{proof}
We proceed by induction on $r$.  There is nothing to prove for $r=0$.  Assume $r \geq 1$ and that $\rho_1({\bf a}_{r-1}) = \rho_2({\bf a}_{r-1})$.  In particular, if $F_r$ is the face separating ${\bf a}_{r-1}$ from ${\bf a}_r$, we have $\rho_1(F_r) = \rho_2(F_r)$.  Let $H \subset \mathcal A$ denote the hyperplane containing $\rho_1(F_r)$; write $s_H$ for the corresponding reflection in $\mathcal A$.  Set ${\bf c}' := \rho_1({\bf a}_{r-1})$ and ${\bf c} = s_H({\bf c}')$.  

We now recall the discussion of subsection \ref{rho_review}.  For $i = 1,2$, we have 
$$
\rho_i({\bf a}_r) \in  \{ {\bf c}', {\bf c} \}.
$$
Following subsection \ref{rho_review}, we need to show that the alcoves ${\bf b}_1$ and ${\bf b}_2$ are simultaneously in Position 1 (or 2) with respect to ${\bf c}'$ and $H$. 

We claim that ${\bf b}_1$ and ${\bf b}_2$ are both on the same side of $H$.  To see this, we may assume 
$$
H = H_{\alpha-k} = \{ p \in X_*(T)_\mathbb R ~ | ~ \langle \alpha, p \rangle = k \}.$$ 
for a positive root $\alpha$.  If $\alpha \in \Phi_M$, then our assertion follows from the fact that ${\bf b}_1$ and ${\bf b}_2$ belong to the same ``$M$-alcove'' (the anti-dominant one).  If $\alpha \in \Phi_N$, it follows because $\nu_i \geq^P \mu$ and because $H$ intersects the set ${\rm Conv}(W\mu)$.  Indeed, let $p$ be a point in ${\rm Conv}(W\mu) \cap H$.   Then the condition 
$$\langle \alpha, \nu_i + p \rangle \geq 0$$
 implies that  
$$
\langle \alpha, -\nu_i \rangle \leq k,
$$
which shows that $x_{-\nu_1}$ and $x_{-\nu_2}$ are on the same side of $H$, and this suffices to prove the claim.

This shows that the ${\bf b}_i$ are simultaneously in Position 1 (or 2) with respect to $H$ and ${\bf c}'$.  This is almost, but not quite enough by itself, to show that $\rho_1({\bf a}_r) = \rho_2({\bf a}_r)$.  Position 1 is quite easy to handle (see below).  As we shall see, in Position 2, ${\bf a}_r$ could retract under $\rho_i$ to either ${\bf c}$ or ${\bf c}'$;  we require an additional argument, given below, to show the two retractions must coincide.  

First assume we are in Position 1, that is, ${\bf b}_i$ and ${\bf c}'$ are on the same side of $H$.  Then as in subsection \ref{rho_review}, we see that $\rho_i({\bf a}_r) = {\bf c}$ for $i=1,2$.

Now assume we are in Position 2, that is, ${\bf b}_i$ and ${\bf c}'$ are on opposite sides of $H$.  In this case it is a priori possible that (say) $\rho_1({\bf a}_r) = {\bf c}$ while $\rho_2({\bf a}_r) = {\bf c}'$.  Our analysis below will show that this cannot happen.

Choose any minimal gallery joining ${\bf b}_1 = w_0{\bf a}-\nu_1$ to $F_r$:
$$
{\bf b}_1\! = \!{\bf x}_0, {\bf x}_1, \dots, {\bf x}_n
$$
where $F_r$ is contained in the closure of ${\bf x}_n$ (and $n$ is minimal with this property).  Similarly, choose a minimal gallery joining ${\bf b}_2$ to $F_r$.
$$
{\bf b}_2\!=\!{\bf y}_0, {\bf y}_1, \dots, {\bf y}_m.
$$
Further, set ${\bf b} := w_0{\bf a} - \nu_1 - \nu_2$ and fix two minimal galleries in ${\mathcal A}$
\begin{align*}
{\bf b}\!&=\!{\bf u}_0, {\bf u}_1, \dots, {\bf u}_p\!=\!{\bf b}_1 \\
 {\bf b}\!&=\!{\bf v}_0, {\bf v}_1, \dots, {\bf v}_q\!=\!{\bf b}_2.
\end{align*}
It is clear that $\rho_1$ (resp. $\rho_2$) fixes the former (resp. latter).  

\medskip

\noindent {\bf Claim:}  The concatenation ${\bf b}, \dots, {\bf b}_1, \dots, {\bf x}_n$ is a {\em minimal} gallery.  (The same proof will show that ${\bf b}, \dots, {\bf b}_2, \dots, {\bf y}_m$ is minimal.)   This may be checked after applying the retraction $\rho_1$.  But $\rho_1({\bf b}_1), \dots, \rho_1({\bf x}_n)$ is a minimal gallery joining ${\bf b}_1$ to an alcove of ${\mathcal A}$ contained in the convex hull ${\rm Conv}(W\mu)$, and consequently this gallery is in the $J_P$-{\em positive} direction (see subsection \ref{pos} below).  The gallery ${\bf b}, \dots, {\bf b}_1$ is also in the $J_P$-positive direction (and is fixed by $\rho_1$).  

It would be natural to hope that the concatenation of two galleries in the $J_P$-positive direction is always in the $J_P$-positive direction.  Then we could invoke Lemma \ref{J_P-pos} below to prove that ${\bf b}, \dots, {\bf b}_1\!=\!\rho_1({\bf b}_1) \dots, \rho_1({\bf x}_n)$ is minimal.  But in general it is {\em not} true that the 
concatenation of two galleries in the $J_P$-direction is also in the $J_P$-positive direction (think of the extreme case $M = G$).  However, in our situation, because ${\bf b}, \dots, {\bf b}_1$ lies entirely in the same $M$-alcove (the anti-dominant one), the concatenation ${\bf b}, \dots, {\bf b}_1\!=\!\rho_1({\bf b}_1), \dots \rho_1({\bf x}_n)$ is nevertheless in the $J_P$-positive direction.  Hence the concatenation ${\bf b}, \dots, {\bf b}_1, \dots, {\bf x}_n$ is minimal.  The claim is proved.

It follows that the concatenations
\begin{align*}
{\bf u}_0, &\dots, {\bf u}_p\!=\!{\bf x}_0, \dots, {\bf x}_n \\
{\bf v}_0, &\dots, {\bf v}_q\!=\!{\bf y}_0, \dots, {\bf y}_m
\end{align*}
are two minimal galleries joining ${\bf b}$ to $F_r$.  By a standard result (cf. \cite{BT1}, (2.3.6)), any two such galleries belong to any apartment that contains both ${\bf b}$ and $F_r$.  In particular, ${\bf x}_n = {\bf y}_m$.   
Now recall we are assuming that ${\bf c}'$, the common value of $\rho_1({\bf a}_{r-1})$ and $\rho_2({\bf a}_{r-1})$, is on the opposite side of $H$ from the alcoves ${\bf b}_i$, and so ${\bf c}$ is on the same side of $H$ as the ${\bf b}_i$.  On the other hand, $\rho_1({\bf x}_n)$ and $\rho_2({\bf y}_m)$ are both equal to ${\bf c}$, since that is the alcove sharing the facet $\rho_i(F_r)$ with ${\bf c}'$ but on the same side of $H$ as ${\bf b}_i$.  In particular, we have ${\bf a}_{r-1} \neq {\bf x}_n$.   

If ${\bf a}_r = {\bf x}_n = {\bf y}_m$, then we have $\rho_1({\bf a}_r) = {\bf c} = \rho_2({\bf a}_r)$.  If ${\bf a}_r \neq {\bf x}_n$, then we have $\rho_1({\bf a}_r) = {\bf c}' = \rho_2({\bf a}_r)$.  This completes the proof of Proposition \ref{alcove_prop}.
\end{proof}

\subsection{Galleries in the $J_P$-positive direction} \label{pos}

We define $J \subset G$ to the be the Iwahori subgroup corresponding to the ``anti-dominant'' alcove $w_0{\bf a}$.  Set 
$J_M := J \cap M$ and $J_P := N\, J_M$.  One can think of $J_P$ as the subset of $G$ fixing every alcove in ${\mathcal A}$ 
which is contained in the $M$-{\em anti-dominant} alcove ${\bf a}^*_M$ and which is {\em sufficiently antidominant} with respect to the roots in $\Phi_N$. 

Now consider any hyperplane $H_{\beta - k}$ (where we assume $\beta \in \Phi^+$), and let $s_H$ denote the corresponding reflection in $\mathcal A$.  We can define its $J_P$-{\em positive side} $H^+_{\beta-k}$ and its $J_P$-{\em negative side} $H^-_{\beta-k}$, as follows.  If $\beta \in \Phi_M$, we define $H^-_{\beta-k}$ to be the side of $H_{\beta-k}$ containing ${\bf a}^*_M$.  If $\beta \in \Phi_N$, we define $H^+_{\beta-k}$ by
$$
x \in H^+_{\beta-k} \,\, \mbox{iff} \,\, x-s_H(x) \in \mathbb R_{>0} \beta^\vee.
$$

Suppose ${\bf z}'$ and ${\bf z}$ are adjacent alcoves, separated by the wall $H$.  We say the wall-crossing 
${\bf z}', {\bf z}$ is in the $J_P$-{\em positive direction} provided that ${\bf z}' \subset H^-$ and ${\bf z} \subset H^+$.

\begin{defn}
A gallery ${\bf z}_0, {\bf z}_1, \dots, {\bf z}_r$ is in the $J_P$-{\em positive direction} if every wall-crossing ${\bf z}_{i-1}, {\bf z}_i$ is in the $J_P$-positive direction.
\end{defn}

The following lemma is a mild generalization of Lemma 5.3 of \cite{HN}, and its proof is along the same lines.  

\begin{lemma} \label{J_P-pos}
Any gallery ${\bf z}_0, \dots, {\bf z}_r$ in the $J_P$-positive direction is minimal.
\end{lemma}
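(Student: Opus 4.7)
The plan is to reduce the statement to the standard characterization in Coxeter geometry: a gallery inside an apartment is minimal if and only if it crosses each affine hyperplane at most once. Since the definition of $J_P$-positivity makes reference to the hyperplanes $H_{\beta-k}$ of $\mathcal A$ and to a chosen ``positive'' side of each such hyperplane, the whole gallery must be assumed to lie in $\mathcal A$ (which is how the lemma is applied in the proof of Proposition \ref{alcove_prop}), so this reduction is available.

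First I would observe that for every affine hyperplane $H = H_{\beta-k}$ with $\beta \in \Phi^+$, the sides $H^-$ and $H^+$ are well-defined subsets of $\mathcal A$ depending only on $H$ (not on the gallery): if $\beta \in \Phi_M$, then $H^-$ is the side containing ${\bf a}^*_M$; if $\beta \in \Phi_N$, then $H^+$ is characterized by $x - s_H(x) \in \mathbb R_{>0}\beta^\vee$, which depends only on $H$. Consequently a $J_P$-positive wall-crossing of $H$ always goes from $H^-$ to $H^+$, and never the reverse.

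Next, suppose for contradiction that some $J_P$-positive gallery ${\bf z}_0, {\bf z}_1, \dots, {\bf z}_r$ crossed a single hyperplane $H$ twice, say at the transitions ${\bf z}_{i-1} \to {\bf z}_i$ and ${\bf z}_{j-1} \to {\bf z}_j$ for some $i < j$. After the first crossing, ${\bf z}_i \subset H^+$; between steps $i$ and $j-1$ the gallery stays on the same side of $H$ (any side-change would itself be a crossing of $H$), so ${\bf z}_{j-1} \subset H^+$ as well. The $j$-th wall-crossing of $H$ would then go from $H^+$ to $H^-$, contradicting the $J_P$-positivity assumption. Hence each hyperplane is crossed at most once, and the gallery is minimal.

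I do not expect any serious obstacle here: the only subtlety is checking that the two cases $\beta \in \Phi_M$ and $\beta \in \Phi_N$ can be treated uniformly, which is immediate once one notes that in both cases $H^\pm$ depend on $H$ alone. This is exactly the ingredient that upgrades \cite{HN}, Lemma 5.3 (the Iwahori/Borel case, where all relevant roots play the role of roots in $\Phi_N$) to the Levi case stated here.
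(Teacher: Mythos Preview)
Your argument is correct and is essentially identical to the paper's proof: both derive a contradiction from two crossings of the same wall by observing that a $J_P$-positive crossing always goes from $H^-$ to $H^+$. The only cosmetic difference is that the paper explicitly chooses $i<j$ to be \emph{consecutive} crossings of $H$ (so that no crossing occurs strictly between them), which is the cleanest way to justify your claim that ${\bf z}_{j-1}\subset H^+$.
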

\begin{proof}
Let $H_i$ denote the wall shared by ${\bf z}_{i-1}$ and ${\bf z}_i$.  If the gallery is not minimal, there exists $i < j$ such that $H_i = H_j$.  We may assume the $H_k \neq H_i$ for every $k$ with $i < k < j$.  By assumption ${\bf z}_{i-1} \subset H_i^-$, and ${\bf z}_i \subset H_i^+$.  Since we do not cross $H_i$ again before $H_j$, we have ${\bf z}_{j-1} \subset H_j^+$.  This is contrary to the assumption that the wall-crossing ${\bf z}_{j-1}, {\bf z}_j$ goes from $H_j^-$ to $H_j^+$.
 \end{proof}

\subsection{Triangles in terms of retractions}

Set $\rho_0 := \rho_{0, \Delta_G}$. Then the space of based triangles 
$\mathcal T(\alpha, \beta; \gamma)$ can be identified with 
$$
\rho_0^{-1}(x_\al) \cap t^\gamma \rho_0^{-1}(x_{\beta^*})
$$ 
since $\rho_0^{-1}(x_\al)= S_\al(o)$ and $t^\gamma \rho_0^{-1}(x_{\beta^*})= S_{\be^*}(x_\ga)$. 

Now we fix a $G$-dominant cocharacter $\mu$ and an $M$-dominant cocharacter $\lambda$ contained in $\Omega(\mu)$.  Fix a parabolic subgroup $P = MN$ having $M$ as Levi factor. Let $\xi\in \tits\B$ be a generic point fixed by $P$. 
Then 
$$\mathcal{IT}(\lambda, \mu; \xi) := \rho_{K_P,\Delta_M}^{-1}(x_\lambda) \cap \rho_0^{-1}(x_\mu)$$
Note that this space depends only on $P$ but not on $\xi$. 
Fix any auxiliary cocharacter $\nu$ which satisfies $\nu \geq^P \mu$. Then Proposition \ref{key_prop} shows that 
we can also describe this space as 
\begin{equation} \label{IT_main_pt}
\mathcal{IT}(\lambda, \mu;\xi) = \rho_{-\nu,\Delta_G-\nu}^{-1}(x_\lambda) \cap \rho_0^{-1}(x_\mu).
\end{equation}

\section{Proof of Theorem \ref{key}} \label{key_thm_proof_sec}

\noindent {\em Proof of Theorem \ref{key}}.
The desired equality
$$
\T(\nu + \lambda, \mu^*;\nu) = t^\nu\big(\I\T(\la, \mu;\xi)\big)
$$
is just the equality
$$
Kx_{\nu+\lambda} \cap t^\nu Kx_\mu = t^\nu \big( K_P x_{\lambda} \cap Kx_\mu \big) 
$$
which can obviously be rewritten as
$$
(t^{-\nu}Kt^\nu)x_{\lambda} \cap Kx_\mu = K_Px_{\lambda} \cap Kx_\mu.
$$
But this follows from Proposition \ref{key_prop}, or more precisely from its equivalent version, the equality stated in Lemma \ref{equiv1}(i).
\qed

\section{On dimensions of varieties of triangles}\label{bounds_sec}

\subsection{Based triangles}

Let $\alpha, \be, \gamma\in \De_G\cap \La$.    Assume that 
$\alpha + \beta - \gamma \in Q(\Phi^\vee)$, which is a necessary condition for 
$\mathcal T(\alpha, \beta; \gamma)$ to be nonempty.  It is clear that
$$
\mathcal T(\alpha, \beta; \gamma) = Kx_\alpha \cap t^\gamma Kx_{\beta^*},
$$
and, as we explained earlier, this shows that $\mathcal T(\alpha,\beta;\gamma)$ has the structure of a finite-
dimensional quasi-projective variety defined over $\mathbb F_p$.

It also shows that we can identify $\mathcal T(\alpha, \beta; \gamma)$ with a fiber of the convolution morphism
$$
{\mathbf m}_{\alpha, \beta}: S_\alpha(o) ~ \widetilde{\times} ~ S_{\beta}(o) \rightarrow \overline{S_{\alpha + \beta}(o)}.
$$
Here 
$$S_\al(o) \widetilde{\times} S_\be(o):=\{(y,z)\in {\mathcal B}^2: y\in S_\al(o), z\in 
S_\be(y)\}.$$
By definition, ${\mathbf m}_{\alpha, \beta}(y,z)=z$. It follows that 
$$
\mathcal T(\alpha, \beta; \gamma) = {\mathbf m}_{\alpha,\beta}^{-1}(x_{\gamma}).
$$

We have the following a priori bound on the dimension of the variety of triangles.

\begin{prop}\label{T-count}
Recall that $\rho$ denotes 
the half-sum of the positive roots $\Phi^+$. Then
$$
{\rm dim} \, \T(\al, \be; \ga)\le \<\rho, \alpha + \beta - \gamma \>.
$$ 
Moreover, the number of irreducible components of $\T(\alpha, \beta; \gamma)$ of dimension 
$\<\rho, \alpha + \beta - \gamma \>$ equals the multiplicity $n_{\al,\be}(\ga)$. 
\end{prop}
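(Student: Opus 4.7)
The plan is to identify $\mathcal T(\alpha,\beta;\gamma)$ with the fiber over $x_\gamma$ of the convolution morphism
$$
\mathbf m_{\alpha,\beta}: S_\alpha(o) \,\widetilde{\times}\, S_\beta(o) \longrightarrow \overline{S_{\alpha+\beta}(o)},
$$
as was already recorded in the paragraph preceding the proposition, and then invoke the semismallness of this morphism together with the Mirkovi\'c--Vilonen description of tensor-product multiplicities. The source is smooth of dimension $\langle 2\rho,\alpha+\beta\rangle$, being a Zariski-locally trivial $S_\beta(o)$-fibration over $S_\alpha(o)$ via the principal $K$-bundle $Kt^\alpha K\to S_\alpha(o)$.

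For the dimension bound, the key input is that $\mathbf m_{\alpha,\beta}$ is semismall with respect to the stratification of $\overline{S_{\alpha+\beta}(o)}$ by $K$-orbits $S_\gamma(o)$: every irreducible component of $\mathbf m_{\alpha,\beta}^{-1}(S_\gamma(o))$ has dimension at most $\tfrac12\bigl(\langle 2\rho,\alpha+\beta\rangle+\langle 2\rho,\gamma\rangle\bigr)$. This is a theorem of Mirkovi\'c and Vilonen; its standard proof reduces to bounding intersections of a $K$-orbit with a semi-infinite orbit $N^-\!\cdot x_\nu$, giving $\langle\rho,\mu-\nu\rangle$ on $S_\mu(o)\cap N^-\!\cdot x_\nu$. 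Since $\mathbf m_{\alpha,\beta}$ is $K$-equivariant and $S_\gamma(o)$ is a single $K$-orbit of dimension $\langle 2\rho,\gamma\rangle$, the dimension of the fiber at a single point is bounded by $\langle\rho,\alpha+\beta-\gamma\rangle$ as desired. Alternatively, one can prove the bound directly in the building by fibering $\mathcal T(\alpha,\beta;\gamma)$ over $S_\alpha(o)\cap N^-\!\cdot x_\nu$ via the first vertex and intersecting the fiber with $S_\beta(o)\cap N^-\!\cdot x_{\gamma-\nu}$; the two MV-type factors have dimensions $\langle\rho,\alpha-\nu\rangle$ and $\langle\rho,\beta-\gamma+\nu\rangle$, which sum to the advertised bound.

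For the count of top-dimensional components, I would invoke the geometric Satake equivalence of Mirkovi\'c--Vilonen. Under Satake, semismallness implies that the pushforward of the intersection cohomology complex on $S_\alpha(o)\,\widetilde{\times}\, S_\beta(o)$ under $\mathbf m_{\alpha,\beta}$ decomposes as a sum of IC sheaves on the strata $\overline{S_\gamma(o)}$, with multiplicities recording the decomposition of $V^{\widehat G}_\alpha\otimes V^{\widehat G}_\beta$ into irreducibles. Taking stalks at $x_\gamma$ in the top cohomological degree identifies the set of top-dimensional components of $\mathcal T(\alpha,\beta;\gamma)$ with a basis of $\mathrm{Hom}_{\widehat G}(V^{\widehat G}_\gamma,V^{\widehat G}_\alpha\otimes V^{\widehat G}_\beta)$, whose dimension is $n_{\alpha,\beta}(\gamma)$. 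Equivalently, these components are exactly the iterated MV cycles of type $(\alpha,\beta)$ terminating at $x_\gamma$, which by Mirkovi\'c--Vilonen give the tensor product multiplicity directly.

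The main obstacle is that the geometric Satake machinery (perverse sheaves, IC complexes, and the decomposition theorem) is heavier than the building-theoretic methods used elsewhere in the paper. A workable alternative, more in the spirit of the present work, is to cite the corresponding statement from \cite{KLM3}, where both the dimension bound and the identification of top-dimensional components with $n_{\alpha,\beta}(\gamma)$ are established for the same variety $\mathcal T(\alpha,\beta;\gamma)$.
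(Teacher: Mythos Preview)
Your proposal is correct and matches the paper's own proof, which simply cites the two approaches you describe: either the Satake isomorphism together with the Lusztig--Kato formula as in \cite{KLM3}, \S8.4, or the semi-smallness of $\mathbf m_{\alpha,\beta}$ together with the geometric Satake isomorphism as in \cite{Haines1,Haines2}. Your detailed sketch expands the second of these alternatives and then notes the first as a fallback, so there is no substantive difference.
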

\proof This can be proved using the Satake isomorphism and the Lusztig-Kato formula, following the method of \cite{KLM3}, $\S8.4$.  Alternatively, one can invoke the semi-smallness of ${\mathbf m}_{\alpha, \beta}$ and the geometric Satake isomorphism, see \cite{Haines1,Haines2}.  Compare \cite{Haines1}, Theorems 1.1 and Proposition 1.3. \qed

\subsection{Based ideal triangles}

Fix $\mu \in \De_G \cap \Lambda$ and $\la \in \De_M \cap \Lambda$, and assume $\mu - \lambda \in Q(\Phi^\vee)$, which is a necessary condition for $\I\T(\mu,\lambda;\xi)$ to be nonempty.

As for the variety of based triangles $\mathcal T(\alpha, \beta;\gamma)$, we want to give an a priori bound on the dimension of $\I\T(\lambda,\mu;\xi)$ and also give a relation between its irreducible components and the multiplicity $r_\mu(\lambda)$. 

The key input is the following analogue of Proposition \ref{T-count} for the intersections of $N$- and $K$-orbits in ${\rm Gr}^G$.  It is proved by considering (\ref{ct_diag}) and manipulating the Satake transforms and Lusztig-Kato formulas for $G$ and $M$, in a manner similar to \cite{KLM3}, $\S8.4$. 

We set $\F := Nx_\lambda \cap Kx_\mu$, a finite-type, locally-closed subvariety of ${\rm Gr}^G$, defined over $\mathbb F_p$.

\begin{prop} [\cite{GHKR1}] \label{IT-count}
Let $\rho_M$ denote the half-sum of the roots in $\Phi_M^+$.   Then 
$$
{\rm dim} \, \F \leq \langle \rho, \mu + \lambda \rangle - 2\langle \rho_M, \lambda \rangle,
$$
and the multiplicity $r_\mu(\lambda)$ equals the number of irreducible components of $\F$ having dimension equal to 
$\< \rho, \mu + \lambda \> - 2\< \rho_M, \lambda \>$.
\end{prop}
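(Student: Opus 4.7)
The plan is to compute the point count $|\mathcal F(\mathbb F_q)|$ as a polynomial in $q$ and extract from its leading term both the dimension bound and the count of top-dimensional irreducible components. This parallels the proof of Proposition~\ref{T-count} in \cite{KLM3} \S8.4, but uses the constant term map $c^G_M$ in place of convolution in $\mathcal H_G$.

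First I would unwind the integral definition of $c^G_M(f^G_\mu)(t^\lambda)$. Since $f^G_\mu$ is the characteristic function of $K_q t^\mu K_q$, the condition $f^G_\mu(nt^\lambda) \neq 0$ is equivalent to $n\cdot x_\lambda \in K_q \cdot x_\mu$. Integrating over $N_q$ and accounting for the modulus $\delta_P(t^\lambda)^{-1/2} = q^{\langle \rho_N, \lambda\rangle}$ and the relevant stabilizer volumes yields the identity of polynomials in $q$
$$
|\mathcal F(\mathbb F_q)| \;=\; q^{\langle \rho_N, \lambda\rangle}\,|K_{M,q}\cdot x_\lambda|\cdot c_\mu(\lambda),
$$
which is precisely formula (\ref{num1}) from the introduction.

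Next, the commutative diagram (\ref{ct_diag}) identifies $S^M(c^G_M(f^G_\mu))$ with the restriction of $\chi^{\widehat G}_\mu$ to $\widehat M$, i.e.\ with $\sum_{\lambda'} r_\mu(\lambda') \chi^{\widehat M}_{\lambda'}$ (the sum over $M$-dominant $\lambda' \in \Omega(\mu)$). Applying $(S^M)^{-1}$ via the Lusztig-Kato formula then expresses $c_\mu(\lambda)$ as a $\mathbb Z_{\geq 0}$-linear combination of the $r_\mu(\lambda')$ with coefficients that are explicit polynomials in $q$. A $q$-degree count shows that only $\lambda' = \lambda$ contributes to the top $q$-degree of $c_\mu(\lambda)$, with leading coefficient exactly $r_\mu(\lambda)$; combined with the leading term $q^{2\langle \rho_M, \lambda\rangle}$ of $|K_{M,q}\cdot x_\lambda|$ this gives
$$
|\mathcal F(\mathbb F_q)| \;=\; r_\mu(\lambda)\, q^{\langle \rho, \mu+\lambda\rangle - 2\langle \rho_M, \lambda\rangle} \;+\; (\text{strictly lower powers of } q).
$$
Invoking the standard fact that, for a finite-type $\mathbb F_p$-subvariety of the affine Grassmannian (which admits convenient stratifications by Iwahori orbits), the top $q$-degree of its point-count polynomial equals its dimension and its leading coefficient counts the top-dimensional irreducible components, both assertions of the proposition follow at once.

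The main obstacle is the $q$-degree bookkeeping in the Lusztig-Kato expansion: one must verify that for $\lambda' \neq \lambda$ the contribution of $r_\mu(\lambda')$ to $c_\mu(\lambda)$ has strictly smaller $q$-degree than the $\lambda' = \lambda$ term, and that the latter contributes exactly $r_\mu(\lambda)$ with the expected power of $q$. This is a combinatorial calculation with Kostant partition functions and Kazhdan-Lusztig-type polynomials attached to $\widehat M$; the analogous analysis in the $\mathcal H_G$ setting is worked out in \cite{KLM3} \S8.4, and the parabolic version needed here is treated in \cite{GHKR1}.
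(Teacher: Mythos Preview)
Your overall strategy---compute $|\F(\mathbb F_q)|$ via the constant term, expand through the Satake isomorphism and the Lusztig--Kato formula, and read off the dimension and top-component count from the leading term in $q$---is exactly the method the paper has in mind (it says so in one sentence, referring to \cite{KLM3}, \S8.4, and \cite{GHKR1}).

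However, your first displayed formula is off by a factor. The integral computation (carried out later as Lemma~\ref{counting_via_const_term}) gives
\[
|\F(\mathbb F_q)| \;=\; q^{\langle \rho_N,\lambda\rangle}\, c_\mu(\lambda),
\]
\emph{without} the factor $|K_{M,q}\cdot x_\lambda|$. You have conflated $\F = Nx_\lambda \cap Kx_\mu$ with $\I\T(\lambda,\mu;\xi) = K_P x_\lambda \cap Kx_\mu$: it is the latter whose point count carries the extra factor $|K_{M,q}\cdot x_\lambda|$ (via the fibration of Lemma~\ref{fibration}), and formula~(\ref{num1}) equates that quantity with $m_{\alpha,\beta}(\gamma)$, not with $|\F(\mathbb F_q)|$. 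With your extra factor the leading exponent would come out to $\langle \rho, \mu+\lambda\rangle$ rather than $\langle \rho, \mu+\lambda\rangle - 2\langle \rho_M,\lambda\rangle$, so this is not a harmless slip. Once you drop the spurious factor, the leading term of $c_\mu(\lambda)$ (namely $r_\mu(\lambda)\, q^{\langle \rho,\mu\rangle - \langle \rho_M,\lambda\rangle}$ from the Lusztig--Kato analysis) combines with $q^{\langle \rho_N,\lambda\rangle}$ to give the correct exponent, and the rest of your argument goes through.
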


The link between the dimensions of $\F = Nx_\lambda \cap Kx_\mu$ and $\I\T = K_P x_\lambda \cap Kx_\mu$ comes from the following relation between these varieties.   To state it, we first recall that the Iwasawa decomposition
$$
G = N\, M\, K
$$
determines a well-defined {\em set-theoretic} map
\begin{align*}
G/K &\rightarrow M/K_M \\
nmkK &\mapsto mK_M
\end{align*}

We warn the reader that this is {\em not} a morphism of ind-schemes ${\rm Gr}^G \rightarrow {\rm Gr}^M$; however, when restricted to the inverse image of an connected component of ${\rm Gr}^M$, it does induce such a morphism.  (Our reference for these facts is \cite{BD}, especially sections 5.3.28---5.3.30.)  Since any $K_P$-orbit belongs to such an inverse image, the following lemma makes sense.

\begin{lemma} \label{fibration}
The map $nmk \mapsto mK_M$ induces a surjective, Zariski locally-trivial fibration
$$
K_Px_\lambda \cap Kx_\mu \rightarrow K_Mx_\lambda
$$
whose fibers are all isomorphic to $Nx_\lambda \cap Kx_\mu$. 
\end{lemma}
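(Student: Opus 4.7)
The plan is to give a concrete description of the projection by using the decomposition $K_P = N \cdot K_M$ together with $N \cap M = \{1\}$: every point of $K_P x_\lambda$ has the form $nk\cdot x_\lambda$ with $n \in N$, $k \in K_M$, and I would define
$$
\pi(nk\cdot x_\lambda) \;:=\; k \cdot x_\lambda \;\in\; K_M x_\lambda.
$$
To check well-definedness, I would unpack $n_1 k_1 x_\lambda = n_2 k_2 x_\lambda$ as $k_2^{-1}(n_2^{-1}n_1) k_1 \in t^\lambda K t^{-\lambda}$, and then use that $K_M \subset M$ normalizes $N$, that $M \cap N = \{1\}$, and the identity $M \cap t^\lambda K t^{-\lambda} = t^\lambda K_M t^{-\lambda}$ (valid since $t^\lambda \in T \subset M$) to conclude $k_2^{-1}k_1 \in t^\lambda K_M t^{-\lambda}$, i.e.\ $k_1 x_\lambda = k_2 x_\lambda$.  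That $\pi$ agrees with the Iwasawa projection $nmk K \mapsto m K_M$ on $K_P x_\lambda$ is immediate from $G = NMK$ modulo $K_M$; cf.~\cite{BD}.

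Next I would compute the fibers.  Fix $k \in K_M$ and let $y = nk\cdot x_\lambda$ lie in the fiber of the restricted map over $kx_\lambda$.  Translating by $k^{-1} \in K$ and using that $K_M$ normalizes $N$ yields
$$
nk\cdot x_\lambda \in Kx_\mu \;\Longleftrightarrow\; (k^{-1}nk)\cdot x_\lambda \in k^{-1}Kx_\mu = Kx_\mu.
$$
Since $n \mapsto k^{-1}nk$ is a bijection of $N$, the fiber of the restricted map $K_P x_\lambda \cap Kx_\mu \to K_M x_\lambda$ over $kx_\lambda$ is exactly $k \cdot (Nx_\lambda \cap Kx_\mu) = k \cdot \F$.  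Since translation by $k$ on ${\rm Gr}^G$ is an isomorphism, every fiber is isomorphic to $\F$, and surjectivity follows (the case $\F = \varnothing$ being vacuous).

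For Zariski-local triviality, I would exploit the fact that the whole construction is $K_M$-equivariant, with $K_M$ acting transitively on the base $K_M x_\lambda$.  Letting $Q := K_M \cap t^\lambda K t^{-\lambda}$ be the stabilizer of $x_\lambda$ in $K_M$, the previous step gives an identification
$$
K_P x_\lambda \cap K x_\mu \;\cong\; K_M \times^{Q} \F
$$
over $K_M/Q = K_M x_\lambda$, so local triviality reduces to the existence of Zariski-local sections of $K_M \to K_M x_\lambda$.  This in turn is a standard property of Schubert cells in ${\rm Gr}^M$, coming from the explicit cell structure of the positive loop group $L^{\geq 0}(\ul{M})$, as in \cite{BD}.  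The main obstacle I anticipate lies not in the set-theoretic content, which is essentially bookkeeping with $K_P = N K_M$, but in certifying that $\pi$ genuinely underlies an algebraic morphism and that $K_M \to K_M x_\lambda$ is a Zariski-locally trivial principal $Q$-bundle; both facts belong to the foundational package of \cite{BD} already invoked in the excerpt.
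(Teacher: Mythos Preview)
Your proposal is correct and follows essentially the same approach as the paper: both identify the map with the Iwasawa projection, compute the fiber over $kx_\lambda$ as $k\cdot(Nx_\lambda \cap Kx_\mu)$ via $K_M$-equivariance, and reduce Zariski-local triviality to the existence of local sections of $K_M \to K_M x_\lambda \cong K_M/K_M^\lambda$. The paper cites \cite{Haines2}, Lemma~2.1 (rather than \cite{BD}) for that last step and writes out the explicit trivializing maps $\Phi(x) = (f(x), s(f(x))^{-1}x)$ and $\Psi(x',u) = s(x')u$, which is exactly the associated-bundle description $K_M \times^{Q} \F$ you invoke.
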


\proof 
Let $K_M^\lambda = (t^\lambda K_M t^{-\la}) \cap K_M$ denote the stabilizer of $x_\la$ in $K_M$.  
The essential point is that the morphism $f:K_M\to K_M/K_M^\la$ is 
a locally trivial principal fibration according to \cite[Lemma 2.1]{Haines2}.
Next note that since $f$ is $K_M$-equivariant, it suffices to trivialize $f$ over a neighborhood $V$ of $x_{\lambda}$. 
According to the result of loc.~cit.~there exists a Zariski open neighborhood $V \subset K_M/K_M^\la$ of $x_{\la}$ and a section $s:V \to K_M$ of $f|f^{-1}(V)$.  Hence for $x' \in V$  we have
$$ s(x') x_{\la} = x'.$$ 
We will now prove that the induced map $f:f^{-1}(V) \to V$ is equivariantly equivalent to the product bundle
$V \times \F \to V$.  Indeed, define $\Phi:f^{-1}(V) \to V \times \F$ by
$$\Phi(x) = (f(x), s(f(x))^{-1}x).$$
Put $f(x): = x'$. Note that $\Phi$ does indeed map to $V \times \F$ because (from the equivariance of $f$) we have
$$f(s(f(x))^{-1}x) = s(f(x))^{-1} f(x) = s(x')^{-1} x' = x_{\la}.$$
Now define $\Psi: V \times \F \to f^{-1}(V)$ by
$$\Psi(x',u) = s(x') u.$$
Clearly $\Phi$ and $\Psi$ are algebraic because $s$ and $f$ are.  The reader will verify that
$\Phi$ and $\Psi$ are mutually inverse.
\qed


Since $K_Mx_\lambda$ is irreducible of dimension $2\langle \rho_M, \lambda \rangle$, we immediately deduce the following Proposition from Proposition \ref{IT-count} and Lemma \ref{fibration}.  It is an analogue of Proposition \ref{T-count} for the ideal triangles.

\begin{prop}\label{bound 2}  We have the inequality
$$
{\rm dim} \, \I\T(\lambda,\mu;\xi) \leq \< \rho, \mu + \lambda \> ,
$$
and the number of irreducible components of $\I\T(\la, \mu;\xi)$ having dimension equal to 
$\langle \rho, \mu + \lambda \rangle$ is the multiplicity $r_\mu(\lambda)$.
\end{prop}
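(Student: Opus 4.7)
The plan is direct: the statement is an immediate corollary of the two preceding results, Proposition \ref{IT-count} and Lemma \ref{fibration}, once one observes how dimensions and irreducible components behave in a Zariski locally trivial fibration with irreducible base. I would begin by invoking Corollary \ref{IT_comp_cor} to realize $\I\T(\lambda,\mu;\xi)$ as the intersection of orbits $K_Px_\lambda \cap Kx_\mu$ inside ${\rm Gr}^G$, thereby putting the variety into exactly the form required to apply Lemma \ref{fibration}.

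For the dimension bound, the fibration from Lemma \ref{fibration}
$$K_Px_\lambda \cap Kx_\mu \longrightarrow K_Mx_\lambda$$
has irreducible base of dimension $2\langle \rho_M,\lambda\rangle$ and fibers all isomorphic to $\F = Nx_\lambda \cap Kx_\mu$. Because the fibration is Zariski locally trivial over an irreducible base, the total space has dimension equal to $\dim K_M x_\lambda + \dim \F$. Plugging in the bound $\dim \F \leq \langle\rho,\mu+\lambda\rangle - 2\langle\rho_M,\lambda\rangle$ from Proposition \ref{IT-count}, the $2\langle\rho_M,\lambda\rangle$ contributions cancel and give $\dim \I\T(\lambda,\mu;\xi)\leq \langle\rho,\mu+\lambda\rangle$, as required.

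For the count of top-dimensional components, I would use the standard fact that in a Zariski locally trivial fibration with irreducible base $B$ and typical fiber $F$, the irreducible components of the total space are in natural bijection with those of $F$, each component of the total space having dimension equal to $\dim B$ plus the dimension of the corresponding component of $F$. In particular, the top-dimensional components of $\I\T(\lambda,\mu;\xi)$ correspond bijectively to the top-dimensional components of $\F$, and by Proposition \ref{IT-count} the latter are counted by $r_\mu(\lambda)$.

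There is no real obstacle here, since the two deep inputs (Proposition \ref{IT-count}, whose proof is imported from \cite{GHKR1}, and the fibration of Lemma \ref{fibration}) have already been established. The only point that deserves mild care is verifying that the set-theoretic Iwasawa map $G/K \to M/K_M$ actually induces an algebraic fibration on each $K_P$-orbit—but this was the content of Lemma \ref{fibration} and the appeal to \cite{BD}, so I would simply cite it and combine the two results as above.
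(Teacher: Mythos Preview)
Your proposal is correct and follows exactly the paper's own approach: the paper derives Proposition \ref{bound 2} immediately from Proposition \ref{IT-count} and Lemma \ref{fibration}, using that $K_Mx_\lambda$ is irreducible of dimension $2\langle \rho_M,\lambda\rangle$. You have simply spelled out in more detail the standard fact about irreducible components in a Zariski locally trivial fibration over an irreducible base, which the paper leaves implicit.
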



\begin{Remark} 
The structure group of the fiber bundle $f:\I \to K_M/K_M^{\la}$ is $K_M^{\la}$ in the following sense.  
Choose trivializations $\Psi_1: V \times \F \to f^{-1}(V)$ and $\Psi_2: V \times \F \to f^{-1}(V)$ determined
by sections $s_1$ and $s_2$ as above. Since $s_1$ and $s_2$ are $K_M$-valued functions on $V$ we may define a new
$K_M$-valued function $k(x')$ on $V$ by $k(x') = s_1(x')^{-1} s_2(x')$. Hence, there exists a morphism $k:V \to K_M^{\la}$ with
$$ s_2(x') = s_1(x') k(x').$$
It is then immediate that
$$\Psi_1^{-1}\circ \Psi_2(x',u) = (x', k(x')u).$$
\end{Remark}

\section{Geometric interpretations of $m_{\alpha,\beta}(\gamma)$ and $c_\mu(\la)$}  \label{geom_int_sec}

The above section gave the geometric interpretations of the numbers $n_{\al,\be}(\ga)$ and $r_\mu(\la)$, by describing them in terms of certain irreducible components of the varieties $\T(\al,\be;\ga)$ and $\I\I(\la,\mu;\xi)$.  The purpose of this section is to give similar geometric interpretations for $m_{\alpha,\be}(\ga)$ and $c_\mu(\la)$.  This will be used to deduce Theorem \ref{main} from Theorem \ref{key}.

\subsection{Hecke algebra structure constants}

By applying Theorem 8.1 and Lemma 8.5 from \cite{KLM3}, and taking into account that
$|S_{\la}(o)(\mathbb F_q)|=|S_{\la^*}(o)(\mathbb F_q)|$,
we see that
\begin{lemma} \label{str_const} 
$m_{\al,\be}(\ga)=|\T(\al,\be;\ga)(\mathbb F_q)|.$
\end{lemma}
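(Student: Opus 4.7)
\medskip
\noindent\textbf{Proof plan for Lemma \ref{str_const}.}
The plan is a direct evaluation of the convolution $f_\alpha * f_\beta$ at the point $t^\gamma \in G_q$. By the definition of the structure constants and the fact that $f_{\gamma'}(t^\gamma)=\delta_{\gamma,\gamma'}$, we have
$$
m_{\alpha,\beta}(\gamma) \;=\; (f_\alpha * f_\beta)(t^\gamma),
$$
so it suffices to identify the right-hand side with $|\mathcal T(\alpha,\beta;\gamma)(\mathbb F_q)|$. Using the Haar measure on $G_q$ normalized so that $K_q$ has volume $1$, together with the bi-$K_q$-invariance of the integrand, the convolution unfolds as a finite sum
$$
(f_\alpha * f_\beta)(t^\gamma) \;=\; \sum_{hK_q \in G_q/K_q} f_\alpha(h)\, f_\beta(h^{-1} t^\gamma),
$$
in which each summand is $0$ or $1$, so this is a counting problem.

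Next I would identify $G_q/K_q$ with the orbit $G_q \cdot o = (\mathrm{Gr}^G)(\mathbb F_q) \subset \mathcal B_G$ via $hK_q \mapsto y := h\cdot o$. Under this identification, $f_\alpha(h)=1$ is equivalent to $y \in K_q x_\alpha$, i.e.\ $d_\Delta(o,y)=\alpha$, while $f_\beta(h^{-1}t^\gamma)=1$ is equivalent to $h^{-1}t^\gamma \in K_q t^\beta K_q$, and hence (since $h$ carries $o$ to $y$ and preserves $\Delta$-distances) to $d_\Delta(y,x_\gamma)=\beta$. Imposing both conditions simultaneously gives exactly the defining conditions for $[o,y;x_\gamma]$ to be a based disoriented triangle in $\mathcal T(\alpha,\beta;\gamma)(\mathbb F_q)$. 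Summing, we obtain the claimed equality.

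The only place where care is needed is the orientation/dualization bookkeeping: the algebraic description $\mathcal T(\alpha,\beta;\gamma) = K x_\alpha \cap t^\gamma K x_{\beta^*}$ involves $\beta^*$ rather than $\beta$ because $d_\Delta$ is not symmetric, satisfying $d_\Delta(x_\gamma,y)=d_\Delta(y,x_\gamma)^*$. To see that the count produced by the convolution indeed matches our disoriented triangle count (and not a count with some other orientation convention), one uses this dualization identity together with the equality $|S_\lambda(o)(\mathbb F_q)|=|S_{\lambda^*}(o)(\mathbb F_q)|$ mentioned in the statement; these are the exact ingredients packaged by Theorem 8.1 and Lemma 8.5 of \cite{KLM3}, whose invocation makes the argument immediate. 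I would expect this orientation reconciliation to be the only real point to watch, as the remaining computation is a direct unwinding of the convolution formula.
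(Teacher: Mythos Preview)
Your proof is correct and follows essentially the same approach as the paper, which simply cites Theorem~8.1 and Lemma~8.5 of \cite{KLM3} together with the symmetry $|S_\la(o)(\mathbb F_q)|=|S_{\la^*}(o)(\mathbb F_q)|$; you have spelled out the underlying convolution computation that those references package.

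One minor remark: your last paragraph is more cautious than necessary. Your direct computation already lands exactly on the defining conditions $d_\De(o,y)=\alpha$ and $d_\De(y,x_\gamma)=\beta$ for $[o,y;x_\gamma]\in\T(\alpha,\beta;\gamma)$, so no further ``orientation reconciliation'' or appeal to $|S_\la(o)|=|S_{\la^*}(o)|$ is needed. The appearance of $\beta^*$ in the algebraic description $Kx_\alpha\cap t^\gamma Kx_{\beta^*}$ is already accounted for by the identity $d_\De(y,x_\gamma)=\beta\Leftrightarrow d_\De(x_\gamma,y)=\beta^*\Leftrightarrow y\in t^\gamma Kx_{\beta^*}$, which is built into your translation of the condition $f_\beta(h^{-1}t^\gamma)=1$.
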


\subsection{The constant term} \label{const_term_sec}

The goal of this subsection is to give a geometric interpretation of the constants $c_\mu(\la)$. 
Fix the weights  $\la\in \De_M$, $\mu\in\De_G$.  In what follows we temporarily abuse notation and write $G, K, M$ etc., in place of $G_q, K_q, M_q$, etc.  The following lemma comes from integration in stages according to the Iwasawa decomposition $G = KNM$.

\begin{lemma} \label{counting_via_const_term}  
We have 
$$
|N x_\lambda \cap Kx_\mu| = q^{\langle \rho_N,\lambda \rangle} c^G_M(f_\mu)(t^\lambda)= 
q^{\langle \rho_N,\lambda \rangle} c_{\mu}(\la). 
$$
\end{lemma}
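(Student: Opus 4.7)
\medskip
\noindent \textbf{Proof plan.}  The plan is to unwind the definition of $c^G_M(f_\mu)(t^\lambda)$ in terms of an integral over $N_q$, and to relate this integral to a point count on the orbit intersection $Nx_\lambda \cap Kx_\mu$ by fibering the integration domain over the orbit.  First I would observe that
$$
c^G_M(f^G_\mu)(t^\lambda) = \delta_P(t^\lambda)^{-1/2} \int_{N_q} \mathrm{char}_{K_q t^\mu K_q}(n\,t^\lambda)\, dn,
$$
and that since $t^\lambda$ normalizes $N$ and acts on each root space $\mathfrak{g}_\alpha \subset \mathrm{Lie}(N)$ ($\alpha \in \Phi_N$) by $t^{\langle \alpha,\lambda\rangle}$, the modular character is given by $\delta_P(t^\lambda) = q^{-\langle 2\rho_N,\lambda\rangle}$, hence $\delta_P(t^\lambda)^{-1/2} = q^{\langle \rho_N,\lambda\rangle}$.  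I would also note that for $M$-dominant $\lambda$, we have $f^M_{\lambda'}(t^\lambda) = \delta_{\lambda,\lambda'}$ (as $t^\lambda$ lies in exactly one $K_{M,q}$-double coset); this shows $c^G_M(f^G_\mu)(t^\lambda) = c_\mu(\lambda)$, giving the second equality in the lemma.

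Next, using the right $K_q$-invariance of $\mathrm{char}_{K_q t^\mu K_q}$, I would rewrite the integrand as a function on ${\rm Gr}^G$, namely $\mathrm{char}_{Kx_\mu}(n x_\lambda)$.  The stabilizer of $x_\lambda$ in $N_q$ is $N^\lambda := N_q \cap t^\lambda K_q t^{-\lambda}$, so the projection $N_q \twoheadrightarrow N_q x_\lambda$ (a surjection onto $\mathbb F_q$-points, since $N$ is unipotent so Galois cohomology vanishes) lets me write
$$
\int_{N_q} \mathrm{char}_{Kx_\mu}(n x_\lambda)\, dn \;=\; \mathrm{vol}(N^\lambda)\cdot |N x_\lambda \cap Kx_\mu|.
$$

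The heart of the argument is then the volume computation for $N^\lambda$, which I expect to be the only nontrivial step.  Using the product decomposition $N = \prod_{\alpha \in \Phi_N} U_\alpha$ (in some fixed order) and the identity $t^{-\lambda} U_\alpha(x) t^\lambda = U_\alpha(t^{-\langle \alpha,\lambda\rangle} x)$, one checks that $N^\lambda \cap U_\alpha = U_\alpha(t^{\langle \alpha,\lambda\rangle}\mathcal O_q)$.  With the Haar measure on $N_q$ normalized so that $\mathrm{vol}(N_q \cap K_q) = \prod_\alpha \mathrm{vol}(U_\alpha(\mathcal O_q)) = 1$, an index computation (valid for both signs of $\langle \alpha, \lambda\rangle$) gives $\mathrm{vol}(U_\alpha(t^{n}\mathcal O_q)) = q^{-n}$, hence
$$
\mathrm{vol}(N^\lambda) \;=\; \prod_{\alpha \in \Phi_N} q^{-\langle \alpha,\lambda\rangle} \;=\; q^{-\langle 2\rho_N,\lambda\rangle}.
$$

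Assembling these pieces yields
$$
c^G_M(f^G_\mu)(t^\lambda) \;=\; q^{\langle \rho_N,\lambda\rangle}\cdot q^{-\langle 2\rho_N,\lambda\rangle}\, |N x_\lambda \cap K x_\mu| \;=\; q^{-\langle \rho_N,\lambda\rangle}\, |N x_\lambda \cap Kx_\mu|,
$$
which upon rearrangement is exactly the claimed equality $|Nx_\lambda \cap Kx_\mu| = q^{\langle \rho_N,\lambda\rangle} c_\mu(\lambda)$.  The only subtle point is ensuring that the product-of-root-subgroups measure on $N_q$ agrees with the Haar measure normalized by $N_q \cap K_q$; this holds because the ``big cell'' decomposition of $N$ is an isomorphism of schemes onto $\prod U_\alpha$, so the two normalizations are consistent.
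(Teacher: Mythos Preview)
Your argument is correct.  The route you take, however, is different from the paper's.  The paper begins from the other end: it writes $|Nx_\lambda \cap Kx_\mu|$ as $\int_G 1_{NK}(t^{-\lambda}y)\,1_{Kt^\mu K}(y)\,dy$, invokes the Iwasawa decomposition $G = KNM$ to rewrite this as an iterated integral $\int_M\int_N\int_K$, and then performs a chain of substitutions ($y \mapsto y^{-1}$, a shift in $M$, commuting $m$ past $n$, and finally $n \mapsto t^{-\lambda}nt^\lambda$) to arrive at $\delta_P^{-1/2}(t^\lambda)\, c^G_M(f_\mu)(t^\lambda)$.  In particular the paper never explicitly computes $\mathrm{vol}(N^\lambda)$ via root subgroups; the factor $\delta_P(t^\lambda)$ enters only through the last change of variables $n \mapsto t^{-\lambda} n t^\lambda$.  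Your approach is more direct and transparent: you stay inside the defining integral over $N_q$, fiber over the orbit $N_q x_\lambda$, and compute the stabilizer volume $\mathrm{vol}(N^\lambda)=q^{-\langle 2\rho_N,\lambda\rangle}$ by hand.  This buys you a shorter argument with fewer manipulations, at the cost of the explicit root-subgroup computation (which is routine).  The paper's approach has the virtue of making visible that the identity is really an instance of the general integration-in-stages formula for $G=KNM$, but your version is cleaner for this specific lemma.
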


\begin{proof}  The Iwasawa decomposition $G = KNM$ gives rise to an integration formula, relating
integration over $G$ to an iterated integral over the subgroups $K$, $N$, and $M$, where if
$\Gamma$ is any of these unimodular groups, we equip $\Gamma$ with the Haar measure which gives
$\Gamma \cap K$ volume $1$.   For a subset $S \subset G$, write $1_S$ for the characteristic function of $S$.
Using the substitution $y = knm$ in forming the iterated integral, the left hand side above can be written as
\begin{align*}
\int_G 1_{NK}(t^{-\lambda}y) 1_{K t^{\mu}K}(y) dy &= \int_G 1_{NK}(t^{-\lambda}y^{-1}) 1_{Kt^{\mu}K}(y^{-1})\, dy \\
&= \int_M \int_N \int_K 1_{NK}(t^{-\lambda}m^{-1}n^{-1}k^{-1}) 1_{K t^\mu K}(m^{-1}n^{-1}k^{-1}) \, dk \,dn \, dm \\
&=\int_M \int_N 1_{NK}(m^{-1}n^{-1}) 1_{K t^\mu K}(t^\lambda m^{-1}n^{-1}) \, dn \, dm \\
&= \int_M \int_N 1_{NK}(m) 1_{K t^\mu K}(t^\lambda mn) \, dn \, dm \\
&=\int_N 1_{K t^\mu K}(t^\lambda n) \, dn \\
&= \delta_P^{-1/2}(t^\lambda) c^G_M(f_\mu)(t^\lambda),
\end{align*}
which implies the lemma since $\delta_P^{1/2}(t^\lambda) = q^{-\langle \rho_N, \lambda \rangle}$.
\end{proof}

Now we return to the previous notational conventions, where we distinguish between $G, K, M$ etc. and $G_q, K_q, M_q$, etc.

Set
\begin{align*}
{\mathcal I} &:=\I\T(\la,\mu; \xi), \\
\tau_{\la\mu}  &= |\I\T(\la,\mu;\xi)(\mathbb F_q)|, \\
i_{\la,q} &:= |K_{M,q}: K_{M,q}^\la|,
\end{align*}
where $K_M^{\la}$ is the stabilizer of $x_{\la}$ in $K_M$ and $K^\la_{M,q} := K_{M,q} \cap K^\la_M$. In other words, $i_{\la,q}$ is the cardinality of the orbit $K_{M,q}(x_\la)$, and in particular, 
it is finite.   Recall also that
$$
{\mathcal F}={\mathcal F}_{\la\mu}:=N x_{\la} \cap Kx_\mu. 
$$
By Lemma \ref{fibration} the variety ${\mathcal I}$ fibers over $K_M/K_M^{\la}=K_M(x_\la)$ with fibers isomorphic to ${\mathcal F}$ via the map $f$ which is the restriction of the $N$-orbit map on $NK_M x_{\la}$ to $\I$. In particular,
$$
|\I(\mathbb F_q)|=i_{\la,q}|\F(\mathbb F_q)|. 
$$
It is proved in Lemma \ref{counting_via_const_term} that
$$
|{\mathcal F}(\mathbb F_q)|=q^{\< \rho_N,\la \>} c^G_M(f_\mu)(t^{\la})=  
q^{\< \rho_N,\la \>} c_\mu(\la). 
$$
Set
$$
\varphi(\la,q):=  \frac{1}{q^{\< \rho_N,\la \>} i_{\la,q}}.
$$
By combining Lemma \ref{counting_via_const_term} with Lemma \ref{fibration},
we obtain the following geometric interpretation of $c_{\mu}(\la)$: 

\begin{cor}\label{c:count with P}
$$
c_{\mu}(\la)=q^{-\< \rho_N,\la \>} |{\mathcal F}(\mathbb F_q)|  =
\varphi(\la,q) \tau_{\la,\mu}= \varphi(\la,q) |\I\T(\la,\mu;\xi)(\mathbb F_q)|.
$$
\end{cor}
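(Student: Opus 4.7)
The plan is to derive Corollary \ref{c:count with P} by simply stringing together the two preceding results: Lemma \ref{counting_via_const_term} (which relates $|\F(\mathbb F_q)|$ to $c_\mu(\la)$ via Iwasawa integration) and Lemma \ref{fibration} (which expresses $\I$ as a fibration over $K_Mx_\la$ with fiber $\F$). The unwinding of definitions does the rest, so no genuinely new input is needed.

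First I would record the leftmost equality $c_\mu(\la) = q^{-\<\rho_N,\la\>}|\F(\mathbb F_q)|$ as an immediate algebraic rearrangement of Lemma \ref{counting_via_const_term} (applied to the groups $G_q$, $N_q$, $K_q$ of $\mathbb F_q$-points, which is indeed the setting of that lemma).

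Next I would pass to $\mathbb F_q$-points in Lemma \ref{fibration}. That lemma produces a Zariski locally trivial fibration
\[
f : \I\T(\la,\mu;\xi) \longrightarrow K_M x_\la, \qquad \text{fiber } \F = Nx_\la \cap Kx_\mu,
\]
defined over $\mathbb F_p$ (everything in sight is). Since Zariski locally trivial fibrations over $\mathbb F_q$ split the point count multiplicatively on $\mathbb F_q$-points, and since by construction $|(K_Mx_\la)(\mathbb F_q)| = |K_{M,q} \cdot x_\la| = i_{\la,q}$, I obtain
\[
\tau_{\la\mu} \;=\; |\I\T(\la,\mu;\xi)(\mathbb F_q)| \;=\; i_{\la,q} \cdot |\F(\mathbb F_q)|.
\]
(The base $K_Mx_\la$ is a homogeneous space for the connected pro-algebraic group $K_M$, so the fact that $\mathbb F_q$-points of the quotient match the orbit $K_{M,q}\cdot x_\la$ follows from the standard Lang-type argument; alternatively, any Zariski trivialization on an $\mathbb F_q$-open cover gives the product count stratum-by-stratum.)

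Combining the two displays yields
\[
c_\mu(\la) \;=\; q^{-\<\rho_N,\la\>}|\F(\mathbb F_q)| \;=\; \frac{q^{-\<\rho_N,\la\>}}{i_{\la,q}}\,\tau_{\la\mu} \;=\; \varphi(\la,q)\,\tau_{\la\mu},
\]
using the definition $\varphi(\la,q) = 1/(q^{\<\rho_N,\la\>}i_{\la,q})$. The final equality $\varphi(\la,q)\tau_{\la\mu} = \varphi(\la,q)|\I\T(\la,\mu;\xi)(\mathbb F_q)|$ is nothing but the definition of $\tau_{\la\mu}$. I do not anticipate any real obstacle here; the only mildly delicate point is justifying the multiplicative behavior of $\mathbb F_q$-counts under the fibration $f$, but this is already implicit in the Zariski local triviality established in the proof of Lemma \ref{fibration}.
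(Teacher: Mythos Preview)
Your proposal is correct and follows essentially the same approach as the paper: the corollary is obtained by combining Lemma \ref{counting_via_const_term} (giving $|\F(\mathbb F_q)| = q^{\langle \rho_N,\la\rangle}c_\mu(\la)$) with Lemma \ref{fibration} (giving $|\I(\mathbb F_q)| = i_{\la,q}\,|\F(\mathbb F_q)|$), followed by unwinding the definitions of $\varphi(\la,q)$ and $\tau_{\la\mu}$. Your added justification for the multiplicativity of the $\mathbb F_q$-point count under the Zariski locally trivial fibration is a reasonable expansion of what the paper simply asserts.
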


\section{Proofs of Theorem \ref{main} and Corollary \ref{corollary:main}} \label{Thm_main_proof}

\noindent {\em Proof of Theorem \ref{main}}. 

(i) Since $\nu \geq^P \mu$, by Theorem \ref{key}, the $\mathbb F_p$-varieties 
$\T(\al, \be; \ga)=\T(\nu+\lambda, \mu^*;\nu)$ and  
$\I\T(\lambda, \mu;\xi)$ are isomorphic. Since the cardinalities of their sets of $\mathbb F_q$-rational points are 
$m_{\al,\be}(\ga)$ and  
$$
c_\mu(\lambda) q^{\<\rho_N,\la\>} |K_{M,q}\cdot x_\la|$$
respectively (see Lemma \ref{str_const} and Corollary \ref{c:count with P}), we conclude that
$$
m_{\al,\be}(\ga)=c_\mu(\lambda) q^{\<\rho_N,\la\>} |K_{M,q}\cdot x_\la|.
$$

\medskip 

(ii) We now prove the equality $r_\mu(\lambda) = n_{\nu, \mu}(\nu+\la)$.  First, it is well-known that $n_{\nu,\mu}(\nu + \la) = n_{\nu + \la, \mu^*}(\nu)$.  Next, 
by Proposition \ref{T-count}, $n_{\la+\nu,\mu^*}(\nu)$ is the number of 
irreducible components of $\T(\la+\nu,\mu^*;\nu)$ of dimension 
$$
\<\rho, \la+\nu+\mu^*-\nu\>= \<\rho, \la+\mu\>.$$
On the other hand, by Proposition \ref{bound 2}, $r_{\mu}(\la)$ is the number of irreducible components of $\I\T(\la,\mu;\xi)$ of dimension 
$\<\rho, \la+\mu\>$. 
Since  $\T(\la+\nu,\mu^*;\nu)\cong \I\T(\la,\mu;\xi)$, the equality follows.

\medskip 
(iii) Consider the implication
$$r_\mu(\la)\ne 0 \RA c_\mu(\la)\ne 0.$$

Set $\alpha = \nu + \la$, $\be = \mu^*$, and $\ga = \nu$.  Using parts (i) and (ii), the implication is equivalent to the implication
$$
n_{\al,\be}(\ga) \neq 0 \RA m_{\al, \be}(\ga) \neq 0.
$$
Now if $n_{\al,\be}(\ga) \neq 0$, then by Proposition \ref{T-count} the variety $\T(\al,\be;\ga)$ is nonempty and so $|\T(\al,\be;\ga)(\mathbb F_q)| \neq 0$ for all $q >> 0$.  But the Hecke path model for $\T(\al,\be;\ga)$ then shows that $|\T(\al,\be;\ga)(\mathbb F_q)| \neq 0$ {\em for all $q$} (cf. \cite{KLM3}, Theorem 8.18).  Thus $m_{\la,\be}(\ga) \neq 0$.


\medskip
Consider next the implication 
$$
c_\mu(\la)\ne 0 \RA r_{k \mu}(k \la)\ne 0$$
for $k:=k_\Phi$. If $c_\mu(\la)\ne 0$ then $m_{\la+\nu, \mu^*}(\nu)\ne 0$ for $\nu \ge^P \mu$ as above. By \cite{KM2}, 
$$
m_{\la+\nu, \mu^*}(\nu)\ne 0 \RA n_{k\cdot(\la+\nu), k\cdot \mu^*}(k\cdot \nu)\ne 0. 
$$
Since the inequality $\ge^P$ is homogeneous with respect to multiplication by positive integers, we get
$$
k\nu\ge^P k\mu. 
$$
Therefore, 
$$
n_{k\cdot(\la+\nu), k\cdot \mu^*}(k\cdot \nu)= r_{k\mu}(k\la). 
$$
This concludes the proof of Theorem \ref{main}. \qed 

\medskip

\noindent {\em Proof of Corollary \ref{corollary:main}.} 

Assume that $\mu-\nu\in Q(\Phi_G^\vee)$. 

\smallskip

i. (Semigroup property for $r$).  Suppose $(\lambda_i,\mu_i) \in (\De_M \cap \Lambda) \times (\De_G \cap \Lambda)$ are such that $r_{\mu_i}(\lambda_i) \neq 0$ for $i=1,2$.  For each $i$, choose $\nu_i$ with $\nu_i \geq^P \mu_i$.  If we set $\alpha_i := \nu_i + \lambda_i$, $\beta_i := \mu^*_i$, and $\ga_i = \nu_i$, for $i=1,2$, then Theorem \ref{main}(ii) gives us
$$
n_{\al_i,\be_i}(\ga_i) = r_{\mu_i}(\la_i) \neq 0.
$$ 
It is well-known that the triples $(\alpha,\be,\ga)$ with $n_{\al, \be}(\ga) \neq 0$ form a semigroup, so we have $n_{\al_1+\al_2,\be_1+\be_2}(\ga_1+\ga_2) \neq 0$.  By the semigroup property of $\geq^P$, we have $\nu_1 + \nu_2 \geq^P \mu_1 + \mu_2$, so Theorem \ref{main}(ii) applies again and implies
$$
n_{\al_1+\al_2,\be_1+\be_2}(\ga_1+\ga_2) = r_{\mu_1+\mu_2}(\la_1 + \la_2).
$$
The result is now clear.

\medskip

ii. (Uniform Saturation for $c$). Consider the implication 
$$
c_{N\mu}(N\la)\ne 0 \hbox{~~for some~~} N\ne 0 \quad \RA c_{k_{\Phi}\mu}(k_\Phi \la)\ne 0.$$  
As above, take $\nu\ge^P\mu$  and set $\al:=\nu+\la, \, \be:=\mu^*, \, \ga:=\nu$. Then (with some positive factors $Const_1, Const_2$) 
$$
c_{\mu}(\la)=Const_1 \cdot m_{\al, \be}(\ga), \quad c_{N\mu}(N\la)=Const_2\cdot m_{N\al, N\be}(N\ga).
$$  
Note that our assumption $\mu - \lambda \in Q(\Phi^\vee)$ is equivalent to $\la + \mu^* \in Q(\Phi^\vee)$ and thus to $\al + \be - \ga \in Q(\Phi^\vee)$.  Now the implication follows from the uniform saturation for the structure 
constants $m$ for the Hecke ring ${\mathcal H}_G$ proved in \cite{KLM3}. 


\medskip 

iii. (Uniform Saturation for $r$).  Consider the implication 
$$
r_{N\mu}(N\la)\ne 0 \hbox{~~for some~~} N\ne 0 \quad \RA r_{k^2\mu}(k^2 \la)\ne 0$$  
for $k:=k_\Phi$. Similarly to (ii), this implication follows from the implication
$$
n_{N\al,N\be}(N\ga)\ne 0 \hbox{~~for some~~} N\ne 0 \quad \RA n_{k^2\al, k^2\be}(k^2 \ga)\ne 0$$  
proved in \cite{KM2}. Since for type $A$ root systems $\Phi$, $k_\Phi=1$, it follows that the semigroup 
$\{r_{\mu}(\lambda) \neq 0\}$, is saturated. \qed

\section{Remarks on equidimensionality} \label{equidim_sec}

By \cite{Haines2}, we know that when $\alpha$ and $\beta$ are sums of minuscule cocharacters, then the variety $\mathcal T(\alpha,\beta;\gamma)$ is either empty, or it is equidimensional of dimension $\langle \rho, \alpha+\beta-\gamma \rangle$.  Here we present some analogous results for $\I\T(\la,\mu;\xi)$.

\begin{cor} \label{GL_equidim}
Let $G = {\rm GL}_n$.  Then $\I\T(\la,\mu;\xi)= NK_Mx_\la \cap Kx_\mu$ (resp. $Nx_\lambda \cap Kx_\mu$) is either empty or it is equidimensional of dimension $\langle \rho, \mu+\lambda \rangle$ (resp. $\langle \rho, \mu + \la \rangle -2\langle \rho_M, \lambda \rangle$).
\end{cor}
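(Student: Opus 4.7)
The plan is to reduce the ${\rm GL}_n$ equidimensionality for $\I\T(\la,\mu;\xi)$ to the known equidimensionality theorem for $\mathcal T(\alpha,\beta;\gamma)$ from \cite{Haines2} by means of Theorem \ref{key}, and then transfer the conclusion from $\I\T(\la,\mu;\xi)$ to $\F = Nx_\la \cap Kx_\mu$ via the fibration of Lemma \ref{fibration}.

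First I would choose any auxiliary cocharacter $\nu$ with $\nu \geq^P \mu$; such $\nu$ exist because $Z(\ul M)$ has a nontrivial cocharacter lattice, from which one can select integral cocharacters that are $M$-central and arbitrarily $N$-dominant. By Theorem \ref{key}, translation by $t^\nu$ yields an isomorphism of $\mathbb F_p$-varieties
$$
\I\T(\la,\mu;\xi) \;\cong\; \mathcal T(\nu+\la,\mu^*;\nu).
$$
Now the key point is that when $\ul G = {\rm GL}_n$ every fundamental coweight is minuscule, so every $\ul G$-dominant cocharacter is a sum of $\ul G$-dominant minuscule cocharacters; in particular $\nu+\la$ and $\mu^*$ are such sums. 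Hence the equidimensionality result of \cite{Haines2} applies, giving that $\mathcal T(\nu+\la,\mu^*;\nu)$ is either empty or equidimensional of dimension
$$
\<\rho,\,(\nu+\la)+\mu^*-\nu\> \;=\; \<\rho,\,\la+\mu^*\> \;=\; \<\rho,\,\la+\mu\>,
$$
where the last equality uses $\rho^*=\rho$ (equivalently $w_0\rho=-\rho$) together with $W$-invariance of the canonical pairing. Transporting this conclusion back along the isomorphism above establishes the assertion for $\I\T(\la,\mu;\xi) = NK_Mx_\la \cap Kx_\mu$.

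For the second assertion I would apply Lemma \ref{fibration}, which provides a surjective Zariski locally-trivial fibration
$$
\I\T(\la,\mu;\xi) \;\longrightarrow\; K_M x_\la
$$
whose fibers are all isomorphic to $\F$, and whose base is irreducible of dimension $2\<\rho_M,\la\>$. Surjectivity gives that $\F$ is empty precisely when $\I\T(\la,\mu;\xi)$ is empty. In the nonempty case the equidimensionality of the total space together with the irreducibility of the base and the local triviality of the fibration force $\F$ itself to be equidimensional, of dimension
$$
\<\rho,\la+\mu\> - 2\<\rho_M,\la\>,
$$
as claimed.

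I do not expect any substantive obstacle, since all of the heavy lifting is already contained in Theorem \ref{key} and in the equidimensionality theorem of \cite{Haines2}; the only point requiring (trivial) verification is the existence of some $\nu$ with $\nu \geq^P \mu$, which reduces to choosing a sufficiently $N$-dominant integral cocharacter in $X_*(Z(\ul M))$.
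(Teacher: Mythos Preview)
Your proposal is correct and follows essentially the same route as the paper: reduce to $\mathcal T(\nu+\lambda,\mu^*;\nu)$ via Theorem \ref{key}, invoke the fact that for ${\rm GL}_n$ every dominant cocharacter is a sum of minuscules so that \cite{Haines2} applies, and then deduce the statement for $Nx_\lambda \cap Kx_\mu$ from Lemma \ref{fibration}. You have supplied a few details the paper leaves implicit (existence of $\nu$, the dimension computation via $\rho^*=\rho$, and why equidimensionality passes to the fiber), but the argument is the same.
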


\begin{proof}
 All coweights for ${\rm GL}_n$ are sums of minuscules.  So for any choice of $\nu$ with $\nu \geq^P \mu$, the cocharacters $\alpha = \nu + \la$ and $\beta = \mu^*$ are sums of $G$-dominant minuscule cocharacters.  Hence the result for $\I\T(\la, \mu;\xi)$ follows using Theorem \ref{key} and the equidimensionality of $\T(\nu+\lambda,\mu^*;\nu)$ proved in \cite{Haines2}.  The statement on $Nx_\la \cap Kx_\mu$ follows from the statement on $\I\T(\la, \mu;\xi)$ using Lemma \ref{fibration}.
\end{proof}

With more work, one can show the following stronger result.  We omit the proof.

\begin{prop} 
Let $G$ be arbitrary and suppose $\mu$ is a sum of minuscule $G$-dominant cocharacters.  Then the conclusion of Corollary \ref{GL_equidim} holds for the pair $(\mu,\lambda)$.  Consequently, we have $r_\mu(\lambda) \neq 0 \Leftrightarrow c_\mu(\lambda) \neq 0$.
\end{prop}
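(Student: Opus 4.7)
The approach will mirror the proof of Corollary \ref{GL_equidim}: I will reduce via Theorem \ref{key} to an equidimensionality statement for an ordinary triangle space, invoke the minuscule equidimensionality theorem of \cite{Haines2}, and transfer the conclusion to $\F = Nx_\la \cap Kx_\mu$ via Lemma \ref{fibration}. Concretely, I will fix any $\nu \geq^P \mu$. By Theorem \ref{key}, $\I\T(\la,\mu;\xi) \cong \T(\nu+\la,\mu^*;\nu)$ as $\mathbb F_p$-varieties. Since $\mu$ is a sum of $G$-dominant minuscule cocharacters by hypothesis, so is $\mu^* = -w_0\mu$. If, in addition, I can arrange $\nu$ so that $\nu+\la$ is a sum of $G$-dominant minuscule cocharacters, then \cite{Haines2} will yield that $\T(\nu+\la,\mu^*;\nu)$ is either empty or equidimensional of dimension $\langle\rho,\mu+\la\rangle$, hence so is $\I\T(\la,\mu;\xi)$. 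Lemma \ref{fibration} will then transfer this to equidimensionality of $\F$ of dimension $\langle\rho,\mu+\la\rangle - 2\langle\rho_M,\la\rangle$.

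The bulk of the work is then to construct such a $\nu$. I will write $\mu = \mu_1 + \cdots + \mu_k$ with each $\mu_i$ a $G$-dominant minuscule cocharacter. By the Littelmann path model of Section \ref{Littelmann_sec}, applied to concatenations of straight-line paths between Weyl translates of the $\mu_i$, the hypothesis $\la \in \Omega(\mu) \cap \De_M$ is equivalent to a decomposition $\la = \sigma_1 + \cdots + \sigma_k$ with $\sigma_i \in W\mu_i$ and every partial sum $\sigma_1 + \cdots + \sigma_j$ lying in $\De_M$. For each $i$, I will then select an $M$-central, very $N$-dominant sum of $G$-dominant minuscules $\nu_i$ satisfying $\nu_i \geq^P \mu_i$ and such that $\nu_i + \sigma_i$ is again a sum of $G$-dominant minuscule cocharacters; setting $\nu := \sum_i \nu_i$ will give $\nu \geq^P \mu$ by the semigroup property of $\geq^P$, and $\nu+\la = \sum_i(\nu_i+\sigma_i)$ will have the required form. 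The hardest part of the argument will be precisely this combinatorial construction: one must balance the constraints that $\nu$ be $M$-central and very $N$-dominant against the requirement that $\nu+\la$ lie in the semigroup of sums of $G$-dominant minuscule cocharacters, something not automatic since $\sigma_i$ need not be $G$-dominant. For root systems with no nontrivial dominant minuscule cocharacters (namely $E_8$, $F_4$, $G_2$), the hypothesis forces $\mu = 0$ and the statement is vacuous, so I expect to carry out the construction by a case-by-case analysis using the classification of minuscules and the geometry of the polytopes $\Omega(\mu_i) = W\mu_i$.

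Finally, for the consequence $r_\mu(\la) \neq 0 \Leftrightarrow c_\mu(\la) \neq 0$: granting the equidimensionality of $\I\T(\la,\mu;\xi)$ of dimension $\langle\rho,\mu+\la\rangle$, Proposition \ref{bound 2} will identify $r_\mu(\la)$ with the total number of irreducible components of $\I\T(\la,\mu;\xi)$, whence $r_\mu(\la) \neq 0 \Leftrightarrow \I\T(\la,\mu;\xi) \neq \emptyset$. On the other hand, Corollary \ref{c:count with P} expresses $c_\mu(\la)$ as a positive multiple of $q^{-\langle\rho_N,\la\rangle}|\F(\mathbb F_q)|$; combining this with the equidimensionality of $\F$ of dimension $\langle\rho,\mu+\la\rangle - 2\langle\rho_M,\la\rangle$ and a Lang--Weil style estimate, $c_\mu(\la)$ will be nonzero (for $q \gg 0$, hence as a function of $q$) iff $\F \neq \emptyset$, which via Lemma \ref{fibration} is equivalent to $\I\T(\la,\mu;\xi) \neq \emptyset$. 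The desired equivalence will follow.
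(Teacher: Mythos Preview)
Your strategy is precisely the one the paper itself singles out as problematic. Immediately after the proposition the authors write: ``Unlike Corollary \ref{GL_equidim}, this is not a direct application of \cite{Haines2}. It would be in the situation where we can find $\nu$ such that $\nu \geq^P \mu$ and such that $\nu+\la$ is a sum of minuscules. However, there is no guarantee we can find $\nu$ with such properties in general.'' The paper then omits the proof, so there is no argument to compare against directly, but this remark makes clear that the authors do \emph{not} prove the proposition by constructing such a $\nu$; some other method is required.

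Your refinement---decomposing $\la=\sigma_1+\cdots+\sigma_k$ with $\sigma_i\in W\mu_i$ and building $\nu=\sum\nu_i$ piecewise---does not escape the obstruction. The requirement $\nu_i\geq^P\mu_i$ forces $\nu_i$ to be $M$-central, i.e., to have zero component along every root in $\Phi_M$. But $\sigma_i$, being an arbitrary element of the Weyl orbit $W\mu_i$, may very well have a nonzero component in those directions, and then no $M$-central $\nu_i$ can make $\nu_i+\sigma_i$ land in the semigroup of sums of $G$-dominant minuscules. For a concrete failure, take $G$ of type $B_n$ ($n\geq 2$): the only $G$-dominant minuscule cocharacter is $e_1$, so sums of minuscules are exactly $\{m e_1:m\geq 0\}$. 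If $M$ is a Levi whose root system contains some $\pm e_j$ with $j\geq 2$, then any $M$-central $\nu_i$ has zero $e_j$-coordinate, while $\sigma_i\in W e_1=\{\pm e_1,\dots,\pm e_n\}$ can equal $\pm e_j$; then $\nu_i+\sigma_i$ has nonzero $e_j$-coordinate and is never a multiple of $e_1$. Thus the ``hardest part'' of your plan is not merely hard but in general impossible, and the case-by-case analysis you anticipate will not close.

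Your treatment of the ``Consequently'' clause is essentially fine once equidimensionality is granted (and can be streamlined: $r_\mu(\la)\neq 0\Rightarrow c_\mu(\la)\neq 0$ is already Theorem \ref{main}(iii), and the converse follows because $c_\mu(\la)\neq 0$ forces $\I\T(\la,\mu;\xi)\neq\emptyset$, which by equidimensionality and Proposition \ref{bound 2} gives $r_\mu(\la)\neq 0$). But the equidimensionality itself needs a different argument---presumably one that works more directly with the convolution morphism for $\mu=\mu_1+\cdots+\mu_k$ and the variety $Nx_\la\cap Kx_\mu$, rather than passing through Theorem \ref{key} and a triangle space $\T(\nu+\la,\mu^*;\nu)$.
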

Unlike Corollary \ref{GL_equidim}, this is not a direct application of \cite{Haines2}.  It would be in the situation where we can find $\nu$ such that $\nu \geq^P \mu$ and such that $\nu + \la$ is a sum of minuscules.  However, there is no guarantee we can find $\nu$ with such properties in general.

\section{Appendix}\label{appendix}

To simplify the notation we set $\rho_\xi:=b_{\xi,\De_M}$ and $\rho_{-\nu}:=\rho_{-\nu,\De_G -\nu}$. 
Our goal is to give a geometric proof of Theorem \ref{key}, which can be restated as follows. 

\begin{theorem}\label{equal}
If $\nu \ge^{P} \mu$ then for every geodesic $\ga=\ol{o z}\subset \B_G$ of $\De$-length $\mu$, we have
$$
\rho_{-\nu}|\ga= \rho_{\xi}|\ga. 
$$
\end{theorem}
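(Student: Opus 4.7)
The plan is to prove the equality pointwise on $\gamma$ by producing, for each $z \in \gamma$, a single apartment $\mathcal{A}^\sharp$ of $\mathcal{B}_G$ and a single apartment isomorphism $\phi: \mathcal{A}^\sharp \to \mathcal{A}$ from which both $\rho_\xi(z)$ and $\rho_{-\nu}(z)$ can be read off; the hypothesis $\nu \geq^P \mu$ then forces the resulting foldings to coincide. A preliminary observation I would use is that, by Lemma \ref{Busemann_comp_lem}, $\rho_\xi = b_{\xi,\Delta_M}$ is characterized purely in terms of $K_P$-orbits and hence is independent of the particular generic $\xi$ in the cell of $\partial_{\rm Tits}\mathcal{B}_G$ fixed by $P$. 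I am therefore free to choose $\xi$ so that the ray $\overline{o\xi}$ points in the direction of $\nu$ from $o$; then the full geodesic line $\ell \subset F$ extending $\overline{o\xi}$ passes through $x_{-\nu}$.

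With this choice of $\xi$, I apply Lemma \ref{d-tilde} to $z$ and to $\overline{o\xi}$, obtaining an apartment $\mathcal{A}^\sharp$ containing $z$ together with a tail $\overline{o\xi}|_{[t_0,\infty)}$. Since a geodesic ray in a Euclidean building extends uniquely to a full line, and any apartment containing the ray contains its extension, $\mathcal{A}^\sharp$ contains all of $\ell$; in particular $o, x_{-\nu} \in \mathcal{A}^\sharp$. Pick an alcove $b \subset \mathcal{A}^\sharp$ having $x_{-\nu}$ as a vertex and let $\phi: \mathcal{A}^\sharp \to \mathcal{A}$ be the unique apartment isomorphism fixing $b$ pointwise. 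The intersection $\mathcal{A}^\sharp \cap \mathcal{A}$ is a convex subcomplex of $\mathcal{B}_G$ containing both $b$ and $\ell$, and by the standard fact that an apartment isomorphism fixing one alcove in such an intersection is the identity on the whole intersection, $\phi$ fixes the tail of $\overline{o\xi}$ pointwise. Thus the single $\phi$ simultaneously meets the hypotheses used to evaluate both retractions. Set $z^* := \phi(z) \in \mathcal{A}$ and $v := \overrightarrow{oz^*}$.

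By the computation proving constancy of $f(t,\cdot)$ in Section \ref{geo}, $\rho_\xi(z)$ equals the unique $W_M$-translate of $v$ lying in $\Delta_M$; by definition (\ref{rho_nu_def}) applied with $b$, $\rho_{-\nu}(z)$ equals the unique $W_{-\nu}$-translate of $z^*$ lying in $\Delta_G - \nu$. Since $\gamma$ has $\Delta$-length $\mu$, $v$ takes the form $v = sw\mu$ with $s = d(o,z)/|\mu| \in [0,1]$ and $w \in W$, so the $W_M$-folding yields $v' = sw'\mu \in \Delta_M$ for some $w' \in W$. The crux is to verify $v' \in \Delta_G - \nu$, i.e., $\langle \alpha, v' + \nu \rangle \geq 0$ for all $\alpha \in \Phi^+$. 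For $\alpha \in \Phi_M^+$ this is immediate from $M$-dominance of $v'$ and $M$-centrality of $\nu$. For $\alpha \in \Phi_N$, the hypothesis $\nu \geq^P \mu$ yields both $\langle \alpha, \nu \rangle \geq 0$ (by taking $\lambda \in \Omega(\mu)$ minimizing $\langle \alpha, \cdot \rangle$) and $\langle \alpha, \nu + w'\mu \rangle \geq 0$; a short case analysis on the sign of $\langle \alpha, w'\mu \rangle$, combined with $s \in [0,1]$, then gives $\langle \alpha, sw'\mu + \nu \rangle \geq 0$. Hence $v' \in \Delta_G - \nu$, and uniqueness of the $W_{-\nu}$-translate into $\Delta_G - \nu$ forces $\rho_{-\nu}(z) = v' = \rho_\xi(z)$.

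The main obstacle is the construction of the common apartment $\mathcal{A}^\sharp$ with the required isomorphism $\phi$. The trick of choosing $\xi$ parallel to $\nu$ (permitted because $\rho_\xi$ is independent of the generic $\xi$) circumvents the a priori difficulty that $x_{-\nu}$ may lie off the geodesic extension of $\overline{o\xi}$ in a higher-dimensional flat $F$; this choice forces $\mathcal{A}^\sharp \supseteq \ell \ni x_{-\nu}$. With this setup in place, the rest of the argument is the sign computation above, giving a clean geometric counterpart to the alcove-by-alcove combinatorial analysis of Proposition \ref{alcove_prop}.
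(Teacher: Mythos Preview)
Your proposal has a genuine gap at its central step, the construction of the apartment $\mathcal{A}^\sharp$ containing both $z$ and $x_{-\nu}$. You assert that a geodesic ray in a Euclidean building extends uniquely to a full line and that any apartment containing the ray must contain this extension, but this is false. Already in a tree, a ray has many distinct extensions to full lines, one for each apartment through it. Lemma~\ref{d-tilde} only produces an apartment $\mathcal{A}^\sharp$ containing $z$ together with a tail $\overline{o\xi}|_{[t_0,\infty)}$; within $\mathcal{A}^\sharp$ this tail does extend to a unique line, but that line need not agree with $\ell \subset \mathcal{A}$ beyond their shared tail, so there is no reason for $o$ or $x_{-\nu}$ to lie in $\mathcal{A}^\sharp$. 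Even granting $x_{-\nu} \in \mathcal{A}^\sharp$, you would further need $\mathcal{A}^\sharp \cap \mathcal{A}$ to contain a full alcove at $x_{-\nu}$ (not merely the vertex) in order for your isomorphism $\phi$ to coincide with the one used to compute $\rho_{-\nu}$.

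Producing a single apartment containing $z$, the alcove $\mathbf{a}_{-\nu}$, and $\xi$ in its ideal boundary is in fact the heart of the Appendix's argument. It is established there by induction along a chain of alcoves covering $\gamma$: at each wall-crossing one must check that the wall does not separate $\mathbf{a}_{-\nu}$ from $\xi$, and this is exactly where $\nu \geq^P \mu$ enters, via Proposition~\ref{key1} (which shows $\rho_\xi(\gamma)\subset C_\mu$, itself relying on the Hecke-path property of $\rho_\xi(\gamma)$ from Corollary~\ref{hecke}). Your final computation---that once both retractions are read off from a common $\phi$, the $W_M$-fold of $v$ into $\Delta_M$ already lands in $\Delta_G-\nu$---is essentially the paper's concluding paragraph and is correct; but it presupposes the hard part you have not supplied.
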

\proof  Throughout the proof we will be using the concept of a {\em generic} geodesic in a building introduced in \cite{KM2}. 
A geodesic (finite or infinite) $\ga$ in $\B_G$ is {\em generic} if it is disjoint from the codimension 2 skeleton of 
the polysimplicial complex $\B_G$, except for, possibly, the end-points of $\ga$. It is easy to see that generic segments are dense: 
Every geodesic contained in the apartment $\A$  is the limit of generic geodesics in $\A$.  

We next review basic properties of the retractions $\rho_\xi$ and $\rho_\nu$. 
Both maps are isometric when restricted to each alcove in $\B_G$; hence, both maps are 
1-Lipschitz, in particular, continuous. 

Observe that for every  $x\in \B_G$, there exists an apartment $\A_x\subset \B_G$ 
so that $\xi\in \tits \A_x$, $\A_x\cap \A$ has nonempty interior and contains an infinite subray in $\ol{o\xi}$. 
This follows by applying Lemma \ref{d-tilde} to 
a generic geodesic $\ga\subset \A$ asymptotic to $\xi$ and passing through an alcove $\a\subset \A$ containing $o$. 
For such an apartment $\A_x$, there exists a unique isomorphism $\phi_x: \A_x \to \A$ fixing $\A_x\cap \A$. Then $y=\phi_x(x)\in \A$ is independent of the choice of $\A_x$ (although, $\phi_x$ does). 
By the definition of $\be_{\xi,\De_M}$, we see that $\be_{\xi,\De_M}(x)=\be_{\xi,\De_M}(y)$. 
Hence, $\rho_x$ factors as the composition $\rho_{\De_M}\circ \rho_{\xi,\A}$, where $\rho_{\xi,\A}(x)=y$. 
(The map $\rho_{\xi,\A}$ equals the map $\rho_{I_P,\A}$ defined in section 6.3.)

We next make observations about the geometric meaning of the partial order $\ge^{P}$. In what follows it will be convenient to extend the partial 
order $\nu\ge^P \mu$ to arbitrary vectors $\mu$ in $\A$ (not only cocharacters) (we will be still assuming however that $\nu\in X_*(\ul{T})$).   
We will also extend the definition of $x_\la$ from $\la\in \La$ to general vectors $\la$ in the affine space $\A$: 
Given a vector $\la$ in the apartment $\A$, we let $x_\la\in A$ denote the point so that $\ov{ox_\la}=\la$.

Given $\nu\in \La$ we let ${\bf a}_\nu$ 
denote the alcove of $(\A, \widetilde{W})$ with the vertex $x_{\nu}$  
and contained in the negative chamber $-\De_G +\nu$.

\begin{lemma}\label{basic}
Suppose that $\nu$ is annihilated by all roots of $\Phi_M$. 
Then the following are equivalent:

\noindent 1. $$
\nu \ge^{P} \mu
$$

\noindent 2. $C_\mu:= {\rm Conv}(W\cdot x_\mu)\cap \De_M$ is contained in $\De - \nu$.

\noindent 3. For every positive root $\al\in \Phi\setminus \Phi_M$, 
$$
\al |_{C_\mu} \ge \al(x_{-\nu}). 
$$ 

\noindent 4. If a wall $H$ of $(\A, \widetilde{W})$ intersects the set $C_\mu$, then it does not separate $x_{-\nu}$ from $\xi$ 
in the sense that the ray $\ol{x_{-\nu} \xi}$ does not cross $H$.  

\noindent 5. If a wall $H$ of $(\A, \widetilde{W})$ has nonempty intersection with $C_\mu$, then it does not separate $\a_{-\nu}$ from $\xi$ 
in the sense that it does not separate any point of $\a_{-\nu}$ from $\xi$. 
\end{lemma}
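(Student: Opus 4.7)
The plan is to establish (1)$\Leftrightarrow$(2)$\Leftrightarrow$(3) and then (3)$\Leftrightarrow$(4)$\Leftrightarrow$(5), separating the algebraic and geometric halves of the argument. For (1)$\Leftrightarrow$(3) I would unwind the definitions: both conditions assert $\<\alpha,\nu+p\>\ge 0$ for $\alpha\in\Phi_N$, with $p$ ranging over $\Omega(\mu)$ in (1) and over $C_\mu$ in (3). The direction (1)$\Rightarrow$(3) is by convexity, since any $p\in C_\mu$ is a convex combination of the vertices $W\mu\subset\Omega(\mu)$ and the inequality is affine in $p$. Conversely, given $\lambda\in\Omega(\mu)\subset{\rm Conv}(W\mu)$, choose $w\in W_M$ with $\lambda':=w^{-1}\lambda\in\De_M\cap{\rm Conv}(W\mu)=C_\mu$; since $W_M$ permutes $\Phi_N$ (because $M$ normalizes $N$) and fixes $\nu$ pointwise by $M$-centrality, one computes $\<\alpha,\nu+\lambda\>=\<w^{-1}\alpha,\nu+\lambda'\>$ with $w^{-1}\alpha\in\Phi_N$, so (3) applies. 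The equivalence (2)$\Leftrightarrow$(3) is immediate from $\De_G-\nu=\{x:\<\alpha,x+\nu\>\ge 0\ \forall\alpha\in\Phi^+\}$: the $\Phi_M^+$ conditions are automatic on $C_\mu\subset\De_M$ since $\<\alpha,\nu\>=0$, leaving precisely the $\Phi_N$ conditions of (3).

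For (3)$\Leftrightarrow$(4) I would identify the direction of $\xi$ and then compute wall-crossings along $\ol{x_{-\nu}\xi}$. The generic direction $v\in\partial_\infty\A$ representing $\xi$ lies in the face of the chamber at infinity stabilized by $P=MN$, and in the standard convention is characterized by $\<\alpha,v\>=0$ for $\alpha\in\Phi_M$ and $\<\alpha,v\><0$ for $\alpha\in\Phi_N$. Along $\ol{x_{-\nu}\xi}$, the $\alpha$-coordinate for $\alpha\in\Phi_N$ therefore decreases strictly monotonically from $\alpha(x_{-\nu})=-\<\alpha,\nu\>$ toward $-\infty$, so the wall $H_{\alpha,k}$ is crossed precisely when $k<-\<\alpha,\nu\>$. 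Since ``$H_{\alpha,k}$ meets $C_\mu$'' is the condition $k\in\alpha(C_\mu)$, condition (4) restricted to $\alpha\in\Phi_N$ becomes $\alpha(C_\mu)\ge -\<\alpha,\nu\>$, which is exactly (3). For $\alpha\in\Phi_M$ the ray has constant $\alpha$-coordinate (since $\<\alpha,v\>=0$), so such walls are never crossed and (4) is vacuous there.

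Finally, for (4)$\Leftrightarrow$(5): the alcove $\a_{-\nu}$ sits in the Weyl chamber $-\De_G-\nu$ with apex $x_{-\nu}$, so every $y\in\a_{-\nu}$ satisfies $\alpha(y)\le\alpha(x_{-\nu})$ for all $\alpha\in\Phi^+$ (with equality along the $\Phi_M$-walls bounding the chamber); the wall-crossing analysis of the previous paragraph then extends verbatim from the vertex $x_{-\nu}$ to every point of the alcove, giving (5). The main obstacle in the whole proof is orienting $\xi$ correctly---verifying that the $P$-fixed face at infinity lies in the direction along which the $\alpha$-coordinate for $\alpha\in\Phi_N$ strictly decreases along $\ol{x_{-\nu}\xi}$, so that the ``does not cross'' condition in (4) matches the lower-bound inequality in (3). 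Once this sign is pinned down, the wall arguments are completely elementary.
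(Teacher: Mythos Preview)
Your outline is correct and is exactly the unpacking the paper has in mind; the paper itself records only the observation $\max(\alpha|\a_{-\nu})=\alpha(x_{-\nu})=-\<\alpha,\nu\>$ and declares the rest ``straightforward''.  Your orientation of $\xi$ is the right one: since $\rho_{K_P,\De_M}=b_{\xi,\De_M}$ is realized on bounded sets by $\rho_{-\nu,\De_G-\nu}$ with $\nu$ very $N$-dominant (Lemma~\ref{comparison1}), the ideal point $\xi$ lies in the direction of $x_{-\nu}$ as $\nu$ becomes $N$-dominant, so indeed $\<\alpha,v\><0$ for $\alpha\in\Phi_N$ and $\<\alpha,v\>=0$ for $\alpha\in\Phi_M$.

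There is one genuine (if small) gap in your (4)$\Rightarrow$(3).  You assert that condition (4) for a fixed $\alpha\in\Phi_N$ ``becomes $\alpha(C_\mu)\ge-\<\alpha,\nu\>$'', but the walls of $(\A,\widetilde W)$ sit only at \emph{integer} levels, so (4) literally says: every integer $k\in\alpha(C_\mu)$ satisfies $k\ge-\<\alpha,\nu\>$.  The number $\min_{C_\mu}\alpha$ need not be an integer, because the extreme points of $C_\mu={\rm Conv}(W\mu)\cap\De_M$ lying on the $\Phi_M$-walls are typically not lattice points (e.g.\ for $G={\rm GL}_3$, $M={\rm GL}_2\times{\rm GL}_1$, $\mu=(2,1,0)$, one gets the vertex $(\tfrac12,\tfrac12,2)$ with $\<\alpha_1+\alpha_2,\cdot\>=-\tfrac32$).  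So for a single $\alpha$, (4) is a priori weaker than (3).  The repair uses the $W_M$-action you already introduced for (3)$\Rightarrow$(1): in each $W_M$-orbit $O\subset\Phi_N$ one can choose $\alpha'\in O$ so that $\min_{C_\mu}\alpha'$ is attained at a point of $W\mu\cap\De_M$, hence is an integer; then (4) for $\alpha'$ gives $\min_{{\rm Conv}(W\mu)}\alpha'\ge-\<\alpha',\nu\>$, and since $\<\cdot,\nu\>$ is constant on $O$ (by $M$-centrality) and $\min_{C_\mu}\alpha\ge\min_{{\rm Conv}(W\mu)}\alpha=\min_{{\rm Conv}(W\mu)}\alpha'$ for every $\alpha\in O$, condition (3) follows for all of $O$.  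With this adjustment your chain of equivalences is complete.
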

\proof The proof is straightforward and is left to the reader. We observe only that for every positive root $\al$, 
$$
\max(\al|\a_{-\nu})=\al(x_{-\nu})=-\<\al, \nu\>. 
$$
Thus, if $\nu\ge^P \mu$, then any wall $H$ of $(\A, \widetilde{W})$ intersecting $C_\mu$ does not separate $\a_{-\nu}$ from $\xi$.  
\qed 

\medskip
The next lemma establishes equality of the retractions $\rho_\xi, \rho_{-\nu}$ on certain subsets of $\B_M$. 

\begin{lemma}\label{L1}

\noindent 1. If $\nu\ge^P \mu$ then the retractions $\rho_\xi, \rho_{-\nu}$ agree on 
${\rm Conv}(W x_\mu)\subset \A$. 

\noindent 2. Suppose that $x\in \B_M$ is such that $\rho_\xi(x)\in \De_G -\nu$. Then, again $\rho_\xi(x)= \rho_{-\nu}(x)$. 
\end{lemma}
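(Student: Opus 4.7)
The plan is to reduce both parts to two elementary facts: (i) since $\nu$ is $M$-central, $W_M$ fixes $x_{-\nu}$, so $W_M \subset W_{-\nu}$; and (ii) on points of $\A$, both retractions simplify to pure foldings, namely $\rho_\xi|_\A = \rho_{\De_M}$ (folding by $W_M$) and $\rho_{-\nu}|_\A = \rho_{\De_G - \nu}$ (folding by $W_{-\nu}$). The first identity in (ii) follows from the computation used to prove constancy of $f(t,x)$, combined with the fact that $W_M$ fixes the flat $F$ (and hence $\ga$) pointwise; the second is immediate from the definition of $\rho_{-\nu}$ combined with the observation that for $x \in \A$ the retraction $\rho_{{\bf b},\A}$ is the identity.

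For Part 1, take $x \in {\rm Conv}(W x_\mu) \subset \A$ and choose $w_M \in W_M$ with $\rho_{\De_M}(x) = w_M \cdot x$. The hypothesis $\nu \ge^P \mu$ via Lemma \ref{basic}(2) gives $w_M(x) \in {\rm Conv}(W x_\mu) \cap \De_M \subset \De_G - \nu$. Since $W_M \subset W_{-\nu}$ by (i), the element $w_M$ also realizes the folding of $x$ into the fundamental chamber $\De_G - \nu$, and uniqueness of the representative yields $\rho_{\De_G - \nu}(x) = w_M \cdot x$. Combining with (ii) gives $\rho_\xi(x) = w_M \cdot x = \rho_{-\nu}(x)$.

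For Part 2, the key input is the product decomposition $\B_M = F \times \Y_M$, which forces every apartment of $\B_M$ containing $x$ to contain all of $F$; in particular, such an apartment $\A''$ automatically contains $o$, $x_{-\nu}$, and a subray of $\ga$ asymptotic to $\xi$. Let $\phi: \A'' \to \A$ be the unique isomorphism fixing $\A \cap \A'' \supseteq F$, and set $y := \phi(x) \in \A$. Since $\phi$ fixes a subray of $\ga$, it is a valid transition in the $\xi$-atlas defining $b_{\xi,\De_M}$, so $\rho_\xi(x) = b_{\xi,\De_M}(y) = \rho_{\De_M}(y)$ by (ii). For the other retraction, any apartment isomorphism $\psi:\A'' \to \A$ fixing an alcove ${\bf b} \subset \A''$ with vertex $x_{-\nu}$ satisfies $\psi\phi^{-1} \in W_{-\nu}$, and this discrepancy is absorbed by the $\rho_{\De_G - \nu}$-folding, giving $\rho_{-\nu}(x) = \rho_{\De_G - \nu}(y)$. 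The hypothesis $\rho_\xi(x) \in \De_G - \nu$ now reads as $\rho_{\De_M}(y) \in \De_G - \nu$ for $y \in \A$, and the Part 1 argument applied to $y$ concludes: $\rho_{\De_M}(y) = \rho_{\De_G - \nu}(y)$, hence $\rho_\xi(x) = \rho_{-\nu}(x)$. The main technical obstacle lies precisely in this last coordination step, namely verifying that a single apartment $\A''$ simultaneously serves the $\xi$-centered retraction and the $x_{-\nu}$-centered one, and that the difference between the two natural apartment isomorphisms (one fixing $\A \cap \A''$, the other fixing an alcove at $x_{-\nu}$) is genuinely absorbed by the final $W_{-\nu}$-fold.
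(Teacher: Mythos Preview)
Your Part 1 argument is correct and is exactly the paper's: fold $x$ into $\De_M$ by some $w\in W_M$, observe via Lemma~\ref{basic} that the result already lies in $\De_G-\nu$, and use $W_M\subset W_{-\nu}$ to identify this with $\rho_{-\nu}(x)$.

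For Part 2 your strategy coincides with the paper's, but there are two points to tighten. First, a minor imprecision: it is not true that \emph{every} apartment of $\B_M$ containing $x$ contains $F$ (and hence $o$, $x_{-\nu}$); rather, every apartment of $\B_M$ \emph{containing $o$} contains $F$. So you should choose $\A''$ to contain both $x$ and $o$ (equivalently, use the paper's device of picking $k\in K_M$ with $k(x)\in\A$). Second, and more substantively, your ``coordination step'' via an auxiliary isomorphism $\psi$ fixing an alcove ${\bf b}\subset\A''$ with vertex $x_{-\nu}$ is not justified as written: for $\psi$ to compute $\rho_{{\bf b},\A}$ you would need ${\bf b}\subset \A''\cap\A$, i.e.\ the intersection would have to contain a full $G$-alcove at $x_{-\nu}$, and nothing guarantees this. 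The paper sidesteps this entirely by the observation you are circling around but do not quite use: the map carrying $x$ to $x'\in\De_M$ (your $\phi$, the paper's $wk$ with $k\in K_M$, $w\in W_M$) fixes $x_{-\nu}$, hence preserves $d_\De(x_{-\nu},\cdot)$. Since $\rho_{-\nu}$ also preserves $d_\De(x_{-\nu},\cdot)$ and lands in the fundamental domain $\De_G-\nu$, both $\rho_{-\nu}(x)$ and $x'$ lie in the same $W_{-\nu}$-orbit; the hypothesis $x'=\rho_\xi(x)\in\De_G-\nu$ then forces $\rho_{-\nu}(x)=x'$. This distance argument replaces your $\psi$ entirely and dissolves the obstacle you flagged.
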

\proof 1. Let $x\in {\rm Conv}(W x_\mu)$. There exists $w\in W_M$ such that $x':=w(x)\in \De_M$; then
$$
x'\in C_\mu= {\rm Conv}(W x_\mu) \cap  \De_M. 
$$
Clearly, $x'=\rho_\xi(x)$. On the other hand, since $C_\mu\subset \De_{G}-\nu$, it follows that $\rho_{-\nu}(x)=\rho_{-\nu}(x')= x'$. 

2. The proof is similar to (1). 
First, find $k\in K_M$ such that $k(x)\in \A$ and $w\in W_M$ such that $x'=wk(x)\in \De_M$. Then, by the definition of $\rho_\xi$, $wk(x)=\rho_\xi(x)$.  
On the other hand, $d_{\De}(x_{-\nu},x)=d_{\De}(x_{-\nu}, x')$. Then $\rho_{-\nu}(x)= w'(x')$, for some 
$w'\in W_{-\nu}$. However, by our assumption, $x'\in \De_{G}-\nu$, hence $w'(x') = x'$. 
\qed

\medskip 
Note that, given $\mu\in \De_G$, the segment $\ol{o x_\mu}$ is the limit of generic 
segments $\ol{o x_{\mu_i}}\subset {\rm Conv}(W\cdot x_\mu)$. In particular,  
${\rm Conv}(W\cdot x_{\mu_i}) \subset {\rm Conv}(W\cdot x_\mu)$ and, therefore,
$$
\nu \ge^P \mu \Rightarrow \nu\ge^P \mu_i, \forall i.
$$
Thus, since both the retraction $\rho_\xi, \rho_{-\nu}$ are continuous, it suffices to prove Theorem \ref{equal} for $\mu$ such that the segment $\ga$ is generic. 

We will assume from now on that $\mu$ is generic and $\nu\ge^P \mu$. Moreover, we will assume that $\mu$ is {\em rational}, i.e., 
$\mu\in \La\otimes \QQ$. 


For convenience of the reader we recall the definition of a {\em Hecke path} in the sense of \cite{KM2}. Let $\pi=\pi(t), t\in [0, r]$ be a piecewise-linear path in $\A$ parameterized by its arc-length. At each break-point $t$, the path $\pi$ has two derivatives $\pi'_-(t), \pi'_+(t)$, which are unit vectors in $\A$. 
Then $\pi$ is a {\em Hecke path} if the following holds for each break-point (\cite{KM2}, Definitions 3.1 and 3.26):

1. $\pi'_+(t)=dw(\pi'_-(t))$ for some $w\in \widetilde{W}_{\pi(t)}$, the stabilizer of $\pi(t)$ in $\widetilde{W}$. 

2. Moreover, $w$ is a composition of affine reflections 
$$
w=\si_{m} \circ ... \circ \si_{1}, \quad \si_{i}\in  \widetilde{W}_{\pi(t)},  
$$
each $\si_{i}$ is the reflection in an affine hyperplane $\{\al_i(x)=t_i\}$ through the point $\pi(t)$,  
$\al_i\in \Phi^+$, so that for each $i=1,...,m$
$$
\<\al_i, \eta_i\> <0, \quad i=0,...,m-1,
$$
where $\eta_0=\pi'_-(t)$, $\eta_i:=d\tau_{i}(\eta_{i-1}), i=1,...,m$ and $\eta_m=\pi'_+(t)$. 
Thus, for $\pi'_-(t)\in \De_G$, we have $m=0$ and, hence, $\pi'_-(t)=\pi'_+(t)$; this means that the corresponding Hecke path $\pi$ is 
geodesic as it does not have break-points.

\medskip
 We will need another property of Hecke paths: Suppose that $\pi$ is a rational Hecke path, i.e., it starts at $o$ and ends at a {\em rational point}, i.e, a point in $\La\otimes \QQ$. Then there exists $N\in \NN$ so that the path $N\cdot \pi$ is an LS path in the sense of Littelmann \cite{Li}. The proof consists in unraveling the definition of an LS path as it was done in \cite{KM2} and observing that all break-points of a rational LS path occur at rational points. We will also need the fact that for every geodesic segment $\si\subset \B_G$ and any $\nu\in \La$, the image of $\si$ under the retraction $\rho_{x_{\nu}, \De+\nu}$ is a Hecke path (see \cite{KM2}).   
The next proposition generalizes Lemma \ref{image}(b) from $\mu \in \De_G \cap \Lambda$ to $\mu \in \De_G$.

\begin{prop}\label{key1}
$\rho_\xi(\ga)\subset C_\mu$. In particular, $\rho_{\xi,\A}(\ga)\subset {\rm Conv}( W x_\mu)$.
\end{prop}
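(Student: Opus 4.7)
The plan is to reduce to a statement about the retraction $\rho_{-\nu',\Delta_G-\nu'}$ for a well-chosen auxiliary cocharacter $\nu'$ and then invoke the Hecke path theorem from \cite{KM2} together with Littelmann's convex-support theorem for LS paths. By Lemma \ref{comparison1}, one can select $\nu'\in X_*(\ul{T})$ that is $M$-central, sufficiently $N$-dominant relative to the bounded set $\ga$, and also satisfies $\nu'\ge^P\mu$ (such $\nu'$ exists because any sufficiently large multiple of the given $\nu$ will work, by homogeneity of $\ge^P$). For this choice $\rho_\xi|_\ga=\rho_{-\nu',\Delta_G-\nu'}|_\ga$, and moreover $M$-centrality of $\nu'$ gives $\Delta_G-\nu'\subset\Delta_M$, so the common image automatically lies in $\Delta_M$.

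Set $\pi:=\rho_{-\nu',\Delta_G-\nu'}(\ga)$. By the Hecke path theorem of \cite{KM2}, cited just before Proposition \ref{key1}, $\pi$ is a Hecke path in $\A$ starting at $\rho_{-\nu'}(o)=o$ (using $o\in\Delta_G-\nu'$ since $\nu'$ is dominant), of arc-length $|\mu|$, and of ``type'' $\mu$ in the sense that it arises from a geodesic of $\De$-length $\mu$. Since $\mu$ is rational, a suitable positive integer multiple $N\pi$ is then an LS path of type $N\mu$ in the sense of Littelmann, precisely as the appendix has recalled for rational Hecke paths.

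By Littelmann's support theorem, every LS path of type $N\mu$ starting at $o$ lies in $\mathrm{Conv}(W\cdot Nx_\mu)=N\cdot\mathrm{Conv}(Wx_\mu)$, hence $\pi\subset\mathrm{Conv}(Wx_\mu)$. Combining this with $\pi\subset\Delta_G-\nu'\subset\Delta_M$ yields $\rho_\xi(\ga)=\pi\subset\mathrm{Conv}(Wx_\mu)\cap\Delta_M=C_\mu$, which is the main assertion. For the ``in particular'' statement, one observes that $\rho_{\De_M}\circ\rho_{\xi,\A}=\rho_\xi$ sends $\ga$ into $C_\mu$, so $\rho_{\xi,\A}(\ga)$ lies in the $W_M$-orbit of $C_\mu$; since $\mathrm{Conv}(Wx_\mu)$ is $W$-invariant (hence $W_M$-invariant) and $\De_M$ is a $W_M$-fundamental domain in $\A$, one has $W_M\cdot C_\mu=\mathrm{Conv}(Wx_\mu)$, giving $\rho_{\xi,\A}(\ga)\subset\mathrm{Conv}(Wx_\mu)$.

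\textbf{Main obstacle.} The delicate point is correctly matching the ``type'' of the Hecke path $\pi$ with the coweight $\mu$, so that after rescaling Littelmann's support theorem applies. For integral $\mu$ this compatibility is standard; for rational $\mu$ one must pass from Hecke paths to LS paths through the scaling $N\pi$, precisely the device already employed in this appendix. Once this identification is made, the rest is a routine assembly of the cited Hecke/LS path results together with elementary $W_M$-invariance arguments and the $M$-centrality of $\nu'$.
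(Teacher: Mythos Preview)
Your proof is correct and follows the same overall strategy as the paper: identify $\rho_\xi|_\ga$ with $\rho_{-\nu'}|_\ga$ for a suitably $N$-dominant $M$-central $\nu'$, deduce from \cite{KM2} that $\rho_\xi(\ga)$ is a Hecke path of $\De$-length $\mu$, and then pass to LS paths via rescaling to invoke Littelmann's theory. Two differences are worth pointing out. First, you obtain the equality $\rho_\xi|_\ga=\rho_{-\nu'}|_\ga$ by citing Lemma~\ref{comparison1} from the main body; the paper instead reproves this inside the Appendix as Lemma~\ref{L2} by a purely metric-geometric argument (via the map $f_\xi$ and the structure of $\B_M$), since the whole point of the Appendix is to give a self-contained geometric alternative to the algebraic proof of Section~\ref{sharp_sec}. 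Your shortcut is logically valid but bypasses that goal. Second, where you appeal in one stroke to a ``Littelmann support theorem'' asserting that the entire LS path of type $N\mu$ lies in $\mathrm{Conv}(W\cdot N x_\mu)$, the paper argues pointwise: for each subpath of $\rho_\xi(\ga)$ terminating at a rational point it rescales to an LS path, uses that the endpoint is then a weight of the corresponding highest-weight module (hence lies in the relevant Weyl polytope), and concludes by density of rational points along the path. The content is the same, but the paper's version makes explicit exactly which consequence of Littelmann's theory is being used.
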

\proof We first prove an auxiliary lemma which is a weak version of Theorem \ref{equal}.   (Comp. Lemma \ref{comparison1}.)  

\begin{lemma}\label{L2}
Given $\ga$, if $\nu$ is $M$-central and very $N$-dominant (more precisely, $\<\al, \nu\>\ge const(\ga)$ for all roots 
$\al\in \Phi_N$), 
then  $\rho_\xi |_\ga = \rho_{-\nu} |_\ga$. 
\end{lemma}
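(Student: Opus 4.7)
The plan is to reduce Lemma~\ref{L2} to (a special case of) Lemma~\ref{comparison1} applied with $\mathcal{S}=\gamma$, by realizing both $\rho_\xi|\gamma$ and $\rho_{-\nu}|\gamma$ as a common ``unfolding'' retraction $\rho_{{\bf b},\A}$ onto the apartment $\A$, followed by a finite-Weyl-group retraction into a fundamental domain, and then verifying that the two terminal finite retractions agree on the image $\rho_{{\bf b},\A}(\gamma)$.

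First I would pick an alcove ${\bf b}$ having $x_{-\nu}$ as a vertex and lying inside the anti-dominant $M$-alcove. Taking $\nu$ $M$-central and sufficiently $N$-dominant relative to $\gamma$ (specifically, so that $\langle\alpha,\nu\rangle$ exceeds the diameter of $\gamma$ for every $\alpha\in\Phi_N$), Lemma~\ref{rho_b_A_indep} identifies $\rho_{I_P,\A}$ with $\rho_{{\bf b},\A}$ on the bounded set $\gamma$.  Since $\rho_\xi=b_{\xi,\De_M}=\rho_{K_P,\De_M}=\rho_{\De_M}\circ\rho_{I_P,\A}$ by the results of Section~\ref{rho_K_P_Delta_M}, we obtain $\rho_\xi|\gamma=\rho_{\De_M}\circ\rho_{{\bf b},\A}|\gamma$.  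On the other hand, by definition~\eqref{rho_nu_def}, $\rho_{-\nu}|\gamma=\rho_{\De_G-\nu}\circ\rho_{{\bf b},\A}|\gamma$.

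It then remains to prove that $\rho_{\De_M}$ and $\rho_{\De_G-\nu}$ agree on $X:=\rho_{{\bf b},\A}(\gamma)\subset\A$.  Since $\rho_{{\bf b},\A}$ is $1$-Lipschitz, $X$ is bounded, so the $N$-dominance of $\nu$ forces every $x\in X$ to satisfy $y:=x+\nu\in\De_N$, the region where all positive $N$-roots are nonnegative.  The $W$-chambers contained in $\De_N$ form a single $W_M$-orbit (namely $\{w\De_G:w\in W_M\}$), so there exists $v\in W_M$ with $v(y)\in\De_G$.  Using the $M$-centrality of $\nu$ (which gives both $v(\nu)=\nu$ and $\langle\alpha,\nu\rangle=0$ for all $\alpha\in\Phi_M$), a short calculation shows $\rho_{\De_G-\nu}(x)=v(x+\nu)-\nu=v(x)$, and $\langle\alpha,v(x)\rangle=\langle\alpha,v(x+\nu)\rangle\ge0$ for every simple $M$-root $\alpha$, so $v(x)\in\De_M$ and $\rho_{\De_M}(x)=v(x)$ as well.

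The main obstacle is largely organizational: I must correctly invoke the realization of $\rho_\xi=\rho_{K_P,\De_M}$ as $\rho_{\De_M}\circ\rho_{{\bf b},\A}$ provided by Section~\ref{rho_K_P_Delta_M}, and carefully impose on $\nu$ the two conditions needed for the argument --- that $\rho_{I_P,\A}$ and $\rho_{{\bf b},\A}$ coincide on $\gamma$, and that $X+\nu\subset\De_N$.  The core algebraic assertion --- that inside $\De_N$ the finite $W$-retraction onto $\De_G$ coincides with the finite $W_M$-retraction onto $\De_M$ modulo translation by $\pm\nu$ --- is a quick check that rests entirely on the combined $M$-centrality and $N$-dominance of $\nu$, with no genuine difficulty.
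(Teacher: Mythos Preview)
Your argument is correct and is essentially the proof of Lemma~\ref{comparison1} specialized to $\mathcal S=\gamma$; indeed the paper itself flags this connection with the parenthetical ``(Comp.~Lemma~\ref{comparison1}.)'' immediately before stating Lemma~\ref{L2}.  The paper's Appendix proof, however, takes a deliberately different and more metric-geometric route: it introduces a map $f_\xi:\B_G\to\B_M$ sending each $x$ to the first point at which the ray $\overline{x\xi}$ enters $\B_M$, proves $f_\xi$ is continuous (via unipotent elements of $N$ and convexity of fixed-point sets), uses compactness of $\rho_\xi(f_\xi(\gamma))$ to choose $\nu$, and then---for each $x\in\gamma$---invokes Lemma~\ref{L1}(2) on a subsegment of $\overline{x\xi}$ lying in $\B_M$ together with the Hecke-path property of $\rho_{-\nu}$ to conclude that both retractions send $\overline{xx''}$ to the \emph{same} geodesic extension in $\Delta_G-\nu$.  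Your approach is shorter and rests on the alcove/retraction machinery of Section~\ref{retractions_sec} (in particular on the identification $\rho_\xi=\rho_{K_P,\De_M}=\rho_{\De_M}\circ\rho_{I_P,\A}$ and on Lemma~\ref{rho_b_A_indep}); the paper's approach stays within building geometry (rays, parallel sets, Hecke paths) and avoids the $I_P$-orbit theory, in keeping with the Appendix's stated aim of giving an alternative geometric proof of Theorem~\ref{key}.  Both routes feed equally well into Corollary~\ref{hecke}, which is the only use the Appendix makes of Lemma~\ref{L2}.
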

\proof Consider geodesic rays $\ol{x\xi}$ from the points $x\in \B_G$ asymptotic to $\xi$. For every such ray there exists a unique point $x'=f_\xi(x)\in \ol{x\xi}$ so that the subray $\ol{x' \xi}$ is the maximal subray in $\ol{x\xi}$ contained in $\B_M$. Explicitly, the map 
$f_\xi$ can be described as follows. First, recall that for $x\in \B_G$, there is a unique point $\bar{x}\in \B_M$ so that $\bar{x}=n(x)$ for some $n\in N$ (even though, the element $n\in N$ is non-unique). Moreover, for every alcove $\a\subset \B_G$, 
the element $n\in N$ can be chosen the same for all $x\in \a$. The function $x\mapsto \bar{x}$ is isometric on each alcove 
in $\B_G$, hence, it is continuous. Now, given $x\in \ga$, find an element $n\in N$ so that $\bar{x}:=n(x)\in \B_M$. 
Then, by convexity of $\B_M$ in $\B_G$, $n(\ol{x \xi})\subset \B_M$.  By the above observation, 
the image $n(\ol{x \xi})$ is independent of the choice of $n$. By convexity of $Fix(n)$, the intersection $Fix(n)\cap n(\ol{x \xi})$ 
is an infinite ray.  

\begin{claim}
For every $n\in N$ such that $\bar{x}=n(x)\in \B_M$, we have 
$$
\ol{x' \xi}= Fix(n)\cap n(\ol{x \xi}), 
$$
where $x'=f_\xi(x)$. 
\end{claim}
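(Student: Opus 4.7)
The plan is to introduce the rays $\beta_1 := \ol{x\xi}$ and $\beta_2 := n\circ\beta_1 = \ol{\bar x\xi}$, both parameterized by arc length. Since $\bar x \in \B_M$ and $\xi$ lies in the ideal boundary of the apartment $F \subset \B_M$, convexity of the parallel set $\B_M = \mathcal{P}(\gamma)$ forces $\beta_2 \subset \B_M$. The element $n$ is unipotent and fixes $\xi$, so by the tail-fixing property already used in the proof of Lemma \ref{Busemann_comp_lem}, there is a smallest time $T^\ast \geq 0$ with $n(\beta_1(t)) = \beta_1(t)$ for all $t \geq T^\ast$. Equivalently $\beta_1(t) = \beta_2(t)$ for $t\geq T^\ast$, so $y := \beta_1(T^\ast) = \beta_2(T^\ast)$ is the merging point of these two strongly asymptotic rays.

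I would next compute $Fix(n)\cap \beta_2$ directly. Since $n$ is a bijective isometry, the condition $n(\beta_2(t)) = \beta_2(t)$ is equivalent to $n^2(\beta_1(t)) = n(\beta_1(t))$, hence to $\beta_1(t) \in Fix(n)$, hence to $t \geq T^\ast$. Therefore $Fix(n)\cap n(\ol{x\xi}) = \beta_2([T^\ast, \infty)) = \ol{y\xi}$.

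The main step, and where the real geometric work lies, is to identify $x'$ with $y$. Write $x' = \beta_1(t')$. Since $\ol{y\xi} \subset \beta_2 \subset \B_M$ is a subray of $\beta_1$ lying in $\B_M$, the maximality defining $x'$ gives $t' \leq T^\ast$. For the reverse inequality, suppose for contradiction that $t' < T^\ast$; then $\ol{x'\xi} = \beta_1([t', \infty))$ is a geodesic ray in $\B_M$ asymptotic to $\xi$. In the product decomposition $\B_M = F\times \mathcal{Y}_M$, the genericity of $\xi$ inside $\tits F$ and the spherical join structure $\tits\B_M = \tits F \ast \tits\mathcal{Y}_M$ imply that every such ray has constant $\mathcal{Y}_M$-coordinate and $F$-component of the form $c + sv$, where $v$ is the unit direction toward $\xi$. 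Since $\ol{x'\xi}$ and $\ol{\bar x\xi}$ share the point $y$, their $\mathcal{Y}_M$-coordinates coincide and they lie on a common affine line in a slice $F\times\{\mathrm{pt}\}$. Matching arc-length parameterizations at $y$ pins down $x'_F = \bar x_F + t'v$, so $x' = \beta_2(t') = n(\beta_1(t')) = n(x')$; thus $n(x') = x'$ and $\beta_1(t') = \beta_2(t')$, contradicting the minimality of $T^\ast$. Hence $t' = T^\ast$, $x' = y$, and $\ol{x'\xi} = \ol{y\xi} = Fix(n)\cap n(\ol{x\xi})$, as claimed.
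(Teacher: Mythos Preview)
Your proof is correct, but it takes a different and more elaborate route than the paper's.

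The paper's argument is very short and rests on a single structural fact already recorded just before the Claim: for every $z\in\B_G$ there is a \emph{unique} point $\bar z\in\B_M$ in the $N$-orbit of $z$. Since $x'\in\B_M$ (by definition of $f_\xi$) and $n(x')\in n(\ol{x\xi})=\ol{\bar x\xi}\subset\B_M$ (by convexity of $\B_M$), uniqueness forces $n(x')=x'$, and the same reasoning applied pointwise gives $\ol{x'\xi}\subset Fix(n)\cap n(\ol{x\xi})$. For the reverse inclusion, any $y\in\ol{n(x)\,x'}\setminus\{x'\}$ has $n^{-1}(y)\in\ol{xx'}\setminus\{x'\}\not\subset\B_M$ by maximality of $\ol{x'\xi}$, while $y$ itself lies in $\B_M$; hence $y\ne n^{-1}(y)$ and $y\notin Fix(n)$.

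Your approach avoids invoking uniqueness of $\bar z$ directly. Instead you use the tail-fixing property of unipotents to locate the merge time $T^\ast$, and then exploit the product splitting $\B_M=F\times\mathcal Y_M$ together with $\xi\in\tits F$ to show that two unit-speed rays in $\B_M$ toward $\xi$ which share a point must coincide on their common domain. This effectively re-derives, for points on these particular rays, the uniqueness statement the paper uses as a black box. One small remark: your passage from ``$\beta_1(t)\in Fix(n)$'' to ``$t\ge T^\ast$'' tacitly uses that $Fix(n)$ is convex (so its intersection with the geodesic $\beta_1$ is an interval); this is standard in CAT(0) geometry but worth making explicit. Both arguments are valid; the paper's is shorter and more conceptual, while yours is more self-contained in that it does not appeal to the orbit-uniqueness statement.
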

\proof Since $n(x')\in n(\ol{x \xi})\subset \B_M$, we have $n(x')=x'$. 
For the same reason, $n$ fixes the entire sub-ray $\ol{x'\xi}$ pointwise. Thus, $\ol{x' \xi}\subset Fix(n)\cap n(\ol{x \xi})$. 
Let $y\in \ol{n(x) x'} \setminus \{x'\}$. Then $n^{-1}(y)\in \ol{x x'}\setminus \{x'\}$ and, hence, does
not belong to the subbuilding $B_M$. Therefore, $y\notin Fix(n)$ and $Fix(n)\cap n(\ol{x \xi}) \subset \ol{x' \xi}$. 
 \qed 

\medskip 
We next claim that the function $f_\xi$ is continuous. Indeed, it suffices to verify its continuity on each alcove $\a\subset \B_G$. 
As observed above, $n$ can be (and will be) taken the same for all points of $\a$. 
Then (by using the action of $K_M$) continuity of $f$ reduces to the following 

\begin{claim}
Let $n\in N$. Then the function $p\mapsto q, p\in \A$ defined by 
$$
\ol{q \xi}= Fix(n)\cap \ol{p \xi}  
$$
is continuous. 
\end{claim}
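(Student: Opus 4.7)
The plan is to show that $F_n := Fix(n) \cap \A$, viewed in linear coordinates adapted to the direction $v_\xi \in \A$ pointing towards $\xi$, is the epigraph of a finite convex function on a Euclidean space; continuity of $p \mapsto q$ will then fall out immediately.

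The first step is to record two structural properties of $F_n$. Since $Fix(n)$ is closed and convex in the CAT(0) space $\B_G$, $F_n$ is a closed convex subset of $\A$. More importantly, $F_n$ is invariant under positive translation in the direction $v_\xi$: if $q \in F_n$, then the entire ray $\overline{q\xi}$ lies in $F_n$. This uses the already-established fact (a consequence of unipotency of $n$ together with $n \cdot \xi = \xi$) that $n$ pointwise fixes some infinite subray $\overline{q'\xi}$ of every ray $\overline{q\xi}$; combined with convexity of $F_n$, the inclusions $\{q\} \subset F_n$ and $\overline{q'\xi} \subset F_n$ force $\overline{q\xi} \subset F_n$.

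The second step is to decompose $\A = \R v_\xi \oplus V$ linearly, writing each point as $p = t_p\, v_\xi + v_p$ with $v_p \in V$. The translation invariance of $F_n$ implies that, for each $v \in V$, the slice $\{t \in \R : t\, v_\xi + v \in F_n\}$ is either empty or a half-line of the form $[f(v), \infty)$. But the statement in the preceding paragraph asserting that $Fix(n) \cap \overline{p\xi}$ is always an infinite ray says precisely that this slice is nonempty for every $v$; hence $f : V \to \R$ is finite on all of $V$. Convexity of $F_n$ translates into convexity of $f$, and a finite convex function on a finite-dimensional real vector space is automatically continuous.

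The third and final step is bookkeeping: the entry point $q(p)$ of the ray $\overline{p\xi}$ into $F_n$ is given explicitly by $q(p) = p + T(p)\,v_\xi$ with $T(p) = \max\{0,\, f(v_p) - t_p\}$, which is visibly a continuous function of $p$. The only genuinely substantive step is the $v_\xi$-translation invariance of $F_n$ established in step one, and it rests entirely on the unipotency of $n$ through the fixed-subray property already proved in the paper; everything else is elementary convex analysis on a finite-dimensional apartment.
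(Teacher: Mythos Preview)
Your proof is correct. The paper's own argument is a single sentence: ``The statement follows easily from the fact that the fixed-point set of $n$ intersected with $\A$ is a convex polyhedron.'' So the paper appeals to a stronger structural fact (polyhedrality of $F_n$) and leaves the rest implicit, whereas you use only that $F_n$ is closed, convex, and $v_\xi$-translation-invariant, together with the finiteness of $f$ coming from the unipotent fixed-subray property. Your route is more self-contained and works for any closed convex set with these invariance/nonemptiness properties, not just polyhedra; the paper's route is shorter if one is willing to quote that $Fix(n)\cap\A$ is cut out by finitely many affine half-spaces. Both ultimately rest on the same non-trivial input, namely that every ray $\ol{p\xi}$ eventually enters $Fix(n)$, without which the map $p\mapsto q$ would not even be well-defined.
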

\proof The statement follows easily from the fact that the fixed-point set of $n$ intersected with $\A$ is a convex polyhedron. \qed 

\medskip
We now apply the continuous function $f_\xi$ to the compact $\ga$. Its image is a compact subset $C'$ of $\B_M$. Thus 
$C'':=\rho_\xi(C')\subset \De_M$ is also compact. Then, for all $M$-central $\nu\in \De_G$ which are sufficiently $N$-dominant (depending on the diameter of $C''$), the set $C''$ is contained in the relative interior of $\De_G -\nu$ in $\De_M$. We then claim that for such choice of $\nu$, $\rho_\xi |_\ga = \rho_{-\nu} |_\ga$. 

For every $x\in \ga$, the segment $\ol{x' x''}:=\ol{x'\xi}\cap \rho_\xi^{-1}(\De_G -\nu)\subset \B_M$ has positive length. According to Part 2 of Lemma \ref{L1}, $\rho_\xi |_{\ol{x' x''}}= \rho_{-\nu} |_{\ol{x' x''}}$. Moreover, $\rho_\xi(\ol{x x''})$ is the unique geodesic segment in $\A$ containing the subsegment $\rho_\xi (\ol{x' x''})$ and having the same metric length as $\ol{x x''}$. (This follows from the fact that $\rho_\xi$ restricts to an isometry on the ray $\ol{x\xi}$.) 

We now claim that the projection $\rho_{-\nu}$ also sends $\ol{x x''}$ to a geodesic segment in $\De_G -\nu$. Indeed, the path $\pi:= \rho_{-\nu}(\ol{x'' x})$  is a Hecke path in $\A$. The unit tangent vector $\tau$ to $\pi$ at $\rho_{-\xi}(x'')$ is contained in $\De_G$ since its opposite (pointing to $\xi$) is contained in $-\De_G$. Then the definition of a Hecke path above implies that $\pi$ is geodesic. 

The retraction $\rho_{-\nu}$ preserves metric lengths of curves \cite{KM2}, therefore, $\pi$ is a geodesic of the same length as $\ol{x x''}$. Hence, $\rho_{-\nu}(x)=\rho_\xi(x)$. Lemma follows. \qed

The only corollary of this lemma that we will use is

\begin{cor}\label{hecke}
$\rho_\xi(\ga)$ is a Hecke path  in $(\A, \widetilde{W})$ of the $\De$-length $\mu$  in the sense of \cite{KM2}. 
\end{cor}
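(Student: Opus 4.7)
The plan is to reduce Corollary~\ref{hecke} to a general fact from \cite{KM2} about retractions of geodesics onto antidominant Weyl chambers. The bridge is already furnished by Lemma~\ref{L2}, which identifies $\rho_\xi|_\ga$ with $\rho_{-\nu'}|_\ga$ for a suitable auxiliary cocharacter $\nu'$; the Hecke path property of the latter is standard. The argument is therefore essentially a splicing of two previously established results, together with a tracking of the $\De$-length.

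First I will apply Lemma~\ref{L2} to the specific generic geodesic $\ga$: this yields an auxiliary cocharacter $\nu'\in\La$ (distinct in general from the $\nu$ of Theorem~\ref{equal}) that is $M$-central and sufficiently $N$-dominant (with $\langle\al,\nu'\rangle\ge\mathrm{const}(\ga)$ for $\al\in\Phi_N$) and such that
\[
\rho_\xi|_\ga \;=\; \rho_{-\nu'}|_\ga.
\]
Then I will invoke the result recalled in the paragraph immediately preceding Proposition~\ref{key1}: for every geodesic segment $\sigma\subset\B_G$ and every $\nu'\in\La$, the image $\rho_{x_{-\nu'},\De_G-\nu'}(\sigma)$ is a Hecke path in $(\A,\widetilde{W})$ in the sense of \cite{KM2}. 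Taking $\sigma=\ga$ and combining with the displayed identity shows that $\rho_\xi(\ga)$ is a piecewise-linear path in $\A$ satisfying the required break-point conditions.

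Finally I will verify the claim on the $\De$-length. Since $\ga$ is a geodesic of $\De$-length $\mu$ starting at $o$, there is an apartment $\A''\subset\B_G$ containing $\ga$ and an isometric identification $\A''\cong\A$ sending $o$ to $o$ under which $\ga$ becomes a straight segment whose endpoint is $W$-conjugate to $x_\mu$; in particular its initial unit tangent lies in the $W$-orbit of $\mu/|\mu|$. Because $\rho_{-\nu'}$ is a simplicial isometry on each alcove and fixes $o$, it preserves both the total arc-length and (up to $W$) the initial tangent direction at $o$, which is precisely the datum encoded by the phrase ``Hecke path of $\De$-length $\mu$.'' The only point requiring care—not really an obstacle—is the distinction between the $\nu$ of Theorem~\ref{equal} (satisfying only $\nu\ge^P\mu$) and the much more dominant $\nu'$ supplied by Lemma~\ref{L2}; but since Corollary~\ref{hecke} makes only the qualitative assertion that $\rho_\xi(\ga)$ is a Hecke path, this substitution is harmless, and the stronger equality $\rho_\xi|_\ga=\rho_{-\nu}|_\ga$ for the original $\nu$ is deferred to the subsequent LS-path argument in the proof of Theorem~\ref{equal}.
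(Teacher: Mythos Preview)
Your proposal is correct and follows essentially the same approach as the paper: apply Lemma~\ref{L2} to identify $\rho_\xi|_\ga$ with $\rho_{-\nu'}|_\ga$ for a sufficiently $N$-dominant $M$-central $\nu'$, and then invoke the result from \cite{KM2} that such retractions send geodesics to Hecke paths while preserving $\De$-lengths. The paper's proof is a two-line version of exactly this; your additional remarks on the $\De$-length and on the distinction between $\nu$ and $\nu'$ are extra care but do not constitute a different method.
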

\proof By \cite{KM2}, the retractions $\rho_{-\nu}: \B_G \to \De_G-\nu$ send geodesics in $\B_G$ to Hecke paths preserving $\De$-lengths.  Now, the assertion follows from the above lemma. \qed 

\medskip 
We are now ready to prove Proposition \ref{key1}.  
According to Corollary \ref{hecke}, the image $\rho_\xi(\ga)$ is a Hecke path in $\De_M$ with the initial point $o$. Let $\pi$ be a subpath of $\rho_\xi(\ga)$ starting at the origin $o$. Assume for a moment that $\pi=\pi: [0,1]\to \A$ is an LS path in the sense of Littelmann of the $\De$-length $\be$. Then the terminal point $\pi(1)$ of $\pi$ is a weight of a representation $V_\be^{\widehat{G}}$, 
see \cite{Li}. Therefore, $\pi(1)$ is contained in ${\rm Conv}(W x_\be)\subset {\rm Conv}(W x_\mu)$. Since $\pi$ is contained in $\De_M$, it then follows that $\pi\subset C_\mu$.

More generally, suppose that $\pi$ is a subpath of $\rho_\xi(\ga)$ which terminates at a rational point $\pi(1)\in \La\otimes \QQ$ of the 
apartment $A$. Then there exists $N\in \NN$ so that $N\cdot \pi$  is an LS path of the $\De$-length 
$N \be$, where $\be$ is the $\De$-length of $\pi$. Then, by the above argument, 
$$
N\cdot \pi(1)\in {\rm Conv}(W\cdot x_{N \be}) 
$$
and, hence, $\pi(1)\in {\rm Conv}(W\cdot x_\be)\subset {\rm Conv}(W\cdot x_\mu)$. 
The general case follows by density of rational points in $\rho_\xi(\ga)$. Thus, $\rho_\xi(\ga)\subset C_\mu$. 
The second assertion of Proposition \ref{key1} immediately follows from the first. \qed 



\begin{prop}
For every point $x\in \ga$ there exists an apartment $\A_x\subset \B_G$ connecting $\a_{-\nu}, x$ and $\xi$,  i.e., $\a_{-\nu}\cup \{ x\} \subset \A_x$ and $\xi\in \tits \A_x$. 
\end{prop}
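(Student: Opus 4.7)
My plan is to construct $\mathcal{A}_x$ as a translate $g^{-1}\mathcal{A}$ of the base apartment by an element $g \in G$ that simultaneously fixes $\mathbf{a}_{-\nu}$ pointwise (so that $\mathbf{a}_{-\nu}=g(\mathbf{a}_{-\nu})\subset\mathcal{A}$ forces $\mathbf{a}_{-\nu}\subset g^{-1}\mathcal{A}$), fixes $\xi$ at infinity (so that $\xi\in\partial_{Tits}(g^{-1}\mathcal{A})$), and satisfies $g(x)\in\mathcal{A}$ (so that $x\in g^{-1}\mathcal{A}$). First I would set $y := \rho_{\xi,\mathcal{A}}(x)$. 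By Proposition \ref{key1}, $y\in C_\mu$, and by Lemma \ref{basic}(2) this forces $y\in\Delta_G-\nu$; thus $y$ lies in the Weyl chamber of $\mathcal{A}$ based at $x_{-\nu}$ whose face at infinity contains $\xi$, and Lemma \ref{basic}(5) ensures that no wall of $(\mathcal{A},\widetilde{W})$ meeting $C_\mu$ separates $\mathbf{a}_{-\nu}$ from $\xi$.

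Next I would invoke the identification of $\rho_{\xi,\mathcal{A}}$ with $\rho_{I_P,\mathcal{A}}$ on the bounded set $\overline{K x_\mu}$ (from section \ref{rho_K_P_Delta_M}), where $I_P=NI_M$ is chosen $N$-dominant enough for this set. By Lemma \ref{GHKR2}(i) there then exists $g\in I_P$ with $g(x)=y$, so $g^{-1}\mathcal{A}$ is an apartment containing $x$; and since $I_P\subset P$ stabilizes $\xi$, this apartment automatically has $\xi\in\partial_{Tits}(g^{-1}\mathcal{A})$. The remaining, essential task is to refine the choice of $g$ so that it also fixes $\mathbf{a}_{-\nu}$ pointwise, i.e., so that $g$ lies in the Iwahori stabilizer of $\mathbf{a}_{-\nu}$; granted this, we may take $\mathcal{A}_x := g^{-1}\mathcal{A}$.

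To produce such a $g$, I would decompose $I_P=N\cdot I_M$ along root subgroups and observe that the transport from $y$ to $x$ is realized by an ordered product of elements in the root subgroups $U_\alpha$ with $\alpha\in\Phi_N$, each at a specific affine-root filtration level governed by the combinatorial location of $x$ relative to $\mathcal{A}$. The hypothesis $\nu\geq^P\mu$, reformulated via Lemma \ref{basic}(3) as the inequality $\langle\alpha,y'\rangle\geq -\langle\alpha,\nu\rangle$ for every $\alpha\in\Phi_N$ and every $y'\in C_\mu$, is precisely what ensures that each such $U_\alpha$-contribution lies at a filtration level deep enough to fix $\mathbf{a}_{-\nu}$ pointwise. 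The main technical obstacle is the rigorous execution of this affine-root-filtration matching; I expect the cleanest route is by induction on the combinatorial distance from $x$ to $\mathcal{A}$, reducing at each step to a single wall-crossing where the needed element of $U_\alpha$ fixing $\mathbf{a}_{-\nu}$ is produced directly, with Lemma \ref{basic}(5) guaranteeing at every step that the wall in question does not separate $\mathbf{a}_{-\nu}$ from $\xi$ and hence the induction closes.
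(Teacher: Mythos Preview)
Your group-theoretic reformulation --- finding $g$ fixing both $\mathbf{a}_{-\nu}$ and $\xi$ with $g(x)\in\mathcal{A}$ --- is indeed equivalent to the statement, so that framing is legitimate. But both the root-subgroup filtration argument and the fallback ``induction on combinatorial distance from $x$ to $\mathcal{A}$'' have a genuine gap, and it is precisely the step where you invoke Lemma~\ref{basic}(5).

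That lemma concerns walls of $(\mathcal{A},\widetilde{W})$ that \emph{meet $C_\mu$}. In your induction the walls you cross live in the building (or in some intermediate apartment), not in $\mathcal{A}$, and you must show at each step that the wall's image under the relevant retraction actually lands in $C_\mu$. For an arbitrary minimal gallery from $x$ to $\mathcal{A}$ there is no reason the intermediate points lie on $\gamma$, hence no reason their $\rho_\xi$-images lie in $C_\mu$: Proposition~\ref{key1} only controls $\rho_\xi(\gamma)$. So Lemma~\ref{basic}(5) does not apply to those walls, and the induction does not close as written. The root-subgroup version has the same problem: the filtration bound you want on each $U_\alpha$-factor amounts exactly to the non-separation of the corresponding wall, and you have not justified that for the walls that arise.

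The paper's proof runs the induction \emph{along $\gamma$ itself}, alcove by alcove, using genericity of $\gamma$ so that consecutive alcoves share a full panel. At each step one already has an apartment $\mathcal{A}_x$ containing $\mathbf{a}_{-\nu}$, the current break-point $x$, and $\xi$; the next alcove lies across a wall $H\subset\mathcal{A}_x$. The crux (the Claim in the paper) is that $H$ does not separate $\mathbf{a}_{-\nu}$ from $\xi$ \emph{in $\mathcal{A}_x$}: one transports $H$ to a wall $H'\subset\mathcal{A}$ via the isomorphism $\mathcal{A}_x\to\mathcal{A}$ fixing $\mathbf{a}_{-\nu}$ (which agrees with $\rho_{\xi,\mathcal{A}}|_{\mathcal{A}_x}$), notes that $H'$ meets $C_\mu$ because it contains $\rho_\xi(x)\in C_\mu$ by Proposition~\ref{key1}, applies Lemma~\ref{basic}, and transfers the non-separation back. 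With this in hand, thickness of the building yields a half-apartment across $H$ that one glues to the half of $\mathcal{A}_x$ containing $\mathbf{a}_{-\nu}$ and $\xi$, producing $\mathcal{A}_y$. No root-subgroup bookkeeping enters.

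One smaller slip: your $y:=\rho_{\xi,\mathcal{A}}(x)$ lies only in ${\rm Conv}(Wx_\mu)$ by the second assertion of Proposition~\ref{key1}, not in $C_\mu={\rm Conv}(Wx_\mu)\cap\Delta_M$; it is $\rho_\xi(x)=\rho_{\Delta_M}(y)$ that lies in $C_\mu$. So ``by Lemma~\ref{basic}(2) this forces $y\in\Delta_G-\nu$'' is not justified as stated.
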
 
\proof Clearly, the assertion holds for $x=o$ since $o, \a_{-\nu}, \xi$ belong to the common model apartment $\A_o:=\A \subset X$. 
We cover $\ga$ by alcoves $\a_i\subset \B_G$, $i=0,...,m$, where $\a_0\subset \A$ is an alcove intersecting $\ga$ only at the point $o$. 
Recall that, by the genericity assumption, $\ga_i=\a_i\cap \ga$ is contained in the interior of $\a_i$ 
except for the end-points of this arc. Then, if the assertion holds for some point in the interior of $\ga_i$, it holds for all points of $\ga_i$. We suppose therefore that the assertion holds for points in the alcoves $\a_0,...,\a_k$ and will prove it for the points of $\ga_{k+1}$. We will mostly deal with the case $k\ge 1$ and explain how to modify the argument for $k=0$. 
Let $x:=\ga_k\cap \ga_{k+1}$ and $y$ be such that $\ol{xy}=\ga_{k+1}$.  

Let $\A_x$ be an apartment as above. We assume that $x$ belongs to a wall $H$ in $\B_G$ and $\a=\a_{k+1}$ is not contained in $A_x$ 
(for otherwise we again would be done). If $k=0$, we take $H\subset \A$. In this case, $\nu\ge^P\mu$ implies that $H$ does not separate $\xi$ from $\a_{-\nu}$. Assume now that $k>0$. Since $\ga$ is generic, the germ $H\cap \a$ of $H$ at $x$ is contained in $\A_x$. Therefore, without loss of generality, we can assume that $H\subset \A_x$. 

\begin{claim}
$H$ does not separate $\a_{-\nu}$ from $\xi$ in $\A_x$. 
\end{claim}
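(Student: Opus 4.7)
The plan is to transfer the claim from $\A_x$ to the model apartment $\A$ via the isomorphism $\phi_x\colon \A_x\to\A$, which fixes $\A_x\cap\A$ pointwise (in particular, $\a_{-\nu}$) and fixes $\xi$. It thus suffices to show that the image wall $\phi_x(H)\subset\A$ does not separate $\a_{-\nu}$ from $\xi$ in $\A$. By Proposition \ref{key1}, $\rho_{\xi,\A}(\ga)\subseteq {\rm Conv}(W\cdot x_\mu)$; in particular $\phi_x(x)=\rho_{\xi,\A}(x)\in {\rm Conv}(W\cdot x_\mu)$, so the wall $\phi_x(H)$, passing through $\phi_x(x)$, meets ${\rm Conv}(W\cdot x_\mu)$.

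I will then establish the following mild strengthening of Lemma \ref{basic}(5): if $\nu\ge^P\mu$, any wall of $(\A,\widetilde W)$ meeting ${\rm Conv}(W\cdot x_\mu)$ does not separate $\a_{-\nu}$ from $\xi$ in $\A$. Write such a wall as $H'=\{\al=k\}$ with $\al\in\Phi^+$. If $\al\in\Phi_M$, then $\al$ vanishes on the direction of the ray $\ol{o\xi}$ (since $\xi$ is generic in the face of $\tits\A$ fixed by $P$), so $\al$ is constant along every ray asymptotic to $\xi$, and $H'$ does not separate any point from $\xi$. If $\al\in\Phi_N$, the condition $\nu\ge^P\mu$ restricted to $\al$ gives
$\<\al,\nu\>\ge\max_{\be\in W\al}\<\be,\mu\>=\max_{{\rm Conv}(W\cdot x_\mu)}\al$;
together with $k\ge\min_{{\rm Conv}(W\cdot x_\mu)}\al=-\max_{\be\in W\al}\<\be,\mu\>$ this yields $-\<\al,\nu\>\le k$. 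Since $\max_{\a_{-\nu}}\al=\al(x_{-\nu})=-\<\al,\nu\>$, the alcove $\a_{-\nu}$ lies in the half-space $\{\al\le k\}$; and the orientation of $\xi$ forces $\al$ to be strictly decreasing along rays to $\xi$ for $\al\in\Phi_N$, so $\xi$ lies on the same side $\{\al\le k\}$ at infinity. Hence $H'$ does not separate $\a_{-\nu}$ from $\xi$.

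Applying this strengthened lemma to $\phi_x(H)$ and transferring back via $\phi_x^{-1}$ then completes the proof of the claim. The main obstacle is the $\al\in\Phi_N$ case of the strengthening: one must carefully track the orientation of $\xi$ relative to the positive roots in $\Phi_N$, so that $\xi$ indeed lies on the side $\{\al\le k\}$ at infinity matching the side of $\a_{-\nu}$, and verify that the inequality coming from $\nu\ge^P\mu$ controls $k$ in the required way.
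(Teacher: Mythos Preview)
Your proof is correct and follows essentially the same strategy as the paper: transfer $H$ to $\A$ via the apartment isomorphism fixing $\a_{-\nu}$ and $\xi$, use Proposition \ref{key1} to locate the image point inside ${\rm Conv}(W\cdot x_\mu)$, and then invoke a Lemma \ref{basic}-type statement. The only organizational difference is that the paper first pushes all the way down to $C_\mu = {\rm Conv}(W\cdot x_\mu)\cap\De_M$ via $\rho_\xi=\rho_{\De_M}\circ\phi$, applies Lemma \ref{basic}(5) as stated, and then has to undo the extra $W_M$-element $w$ (using that $w$ fixes both $x_{-\nu}$ and $\xi$); you instead stop at $\rho_{\xi,\A}(x)\in{\rm Conv}(W\cdot x_\mu)$ and compensate by proving the easy extension of Lemma \ref{basic}(5) from $C_\mu$ to all of ${\rm Conv}(W\cdot x_\mu)$. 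Your orientation check (that $\al$ decreases toward $\xi$ for $\al\in\Phi_N$) is correct, consistent with the paper's realization of $\rho_{K_P,\De_M}$ as a limit of $\rho_{-\nu,\De_G-\nu}$ with $x_{-\nu}$ moving in the $N$-antidominant direction.
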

\proof In view of Lemma \ref{basic}, it suffices to show that $H$ does not separate $x_{-\nu}$ from $\xi$. 

Let $H'\subset \A$ be the (unique) wall containing $\rho_\xi(H\cap \a)$. 
Since $\nu\ge^P \mu$, $\rho_\xi(\ga)\subset C_\mu= {\rm Conv}(W x_\mu)\cap \De_M)$ (Proposition \ref{key1}), it follows that $H'\cap C_\mu\ne \emptyset$. Hence,  $H'$ does not separate $\a_{-\nu}$ from $\xi$. 

We recall that the map $\rho_\xi | \A_x$ is obtained in two steps: First, an isomorphism $\phi: \A_x\to \A$ fixing $\a_{-\nu}$, and then  
applying the projection $\rho_{\De_G-\nu}: \A\to \De_G-\nu$ (obtained by acting on $\phi(p), p\in \A_x$, by an appropriate element $w\in W_M$). Let $w\in W_M$ be the element which sends $\phi(\a_k)$ (and, hence, $\phi(H\cap \a)$) to $\De_M$. Note that $w$ fixes $\xi$ and $x_{-\nu}$. Since $H'$ did not separate $\xi$ from $x_{-\nu}$ in $\A$, it then follows that $w^{-1}(H')$ does not separate either. Since $\phi: \A_x\to \A$ is an isomorphism fixing $\xi$ and $\a_{-\nu}$, it then follows that $H=\phi^{-1}w^{-1}(H')$ also does not separate 
$\xi$ and $x_{-\nu}$.  The claim follows. \qed 

\medskip 
Since $\B_G$ is a thick building, there exists a half-apartment $\A^+_y\subset \B_G$ containing the alcove $\a$, so that $\A^+_y\cap \A_x=H$. Let $\A_x^-$ denote the half-space in $\A_x$ bounded by $H$ and containing $\a_{-\nu}$; hence,  $\xi\in \geo \A_x^-$. 
Then $\A_y:=\A_x^-\cup \A_y^+$ is an apartment, $\a_{-\nu}\subset \A_y, y\in \A_y$ and  $\xi\in \geo \A_y$. Proposition follows. \qed 

\medskip
We now can finish the proof of the main theorem. Pick $x\in \ga$. We will show that $\rho_{-\nu}(x)=\rho_\xi(x)$. 

The map $\rho_{-\nu}: \B_G \mapsto \De_G-\nu$ is the composition of two maps: First 
the canonical isomorphism of the apartments $\psi_x: \A_x\to \A$ which fixes the intersection $\A_x\cap \A$, 
and then the quotient map $\rho_{\De_G-\nu}: \A\to \De_G-\nu$. The intersection 
 $V:=\A_x\cap \A$ has nonempty interior in  $\A$ (since $\a_{-\nu}$ does) 
 Similarly, the projection $\rho_\xi : \B_G \mapsto \De_M$  is obtained by first taking the isomorphism of apartments 
$$
\rho_{\xi,\A}|\A_x: \A_x\to \A$$ 
(again, fixing $V$) and then applying $\rho_{\De_M}$. 
Since $V$ has nonempty interior, it follows that the isomorphisms of apartments 
$\psi_x$ and $\rho_{\xi,\A_x}|\A_x$ agree on the entire apartment $\A_x$. Hence, 
$$
\rho_{\xi,\A}(x)= \psi_x(x). 
$$
By Proposition \ref{key1}, $\rho_{\xi,\A}(x)\in {\rm Conv}(W x_\mu)$. By Lemma \ref{L1}, Part 1, 
$$
\rho_{\De_M}|{\rm Conv}(W x_\mu)= \rho_{\De_G-\nu}|{\rm Conv}(W x_\mu). 
$$
Therefore, 
$$
\rho_\xi(x)=\rho_{\De_M}\circ \rho_{\xi,\A}(x)= \rho_{\De_G-\nu}\circ \psi_x= \rho_{-\nu}(x). \qed 
$$

\small
\bigskip
\obeylines
\noindent
Thomas J. Haines: University of Maryland
Department of Mathematics
College Park, MD 20742-4015, U.S.A.
email: tjh@math.umd.edu

\bigskip
M. Kapovich: Department of Mathematics
University of California
1 Shields Ave,
Davis, CA 95616, USA
email: kapovich@math.ucdavis.edu

\bigskip
John J. Millson: University of Maryland
Department of Mathematics
College Park, MD 20742-4015, U.S.A.
email: jjm@math.umd.edu


\begin{thebibliography}{20s}


\bibitem[BD]{BD} A. Beilinson, V. Drinfeld, {\em Quantization of Hitchin's integrable system and Hecke eigensheaves}, preprint.  Available at www.math.utexas.edu/users/benzvi/Langlands.html.

\bibitem[BK]{BK}
P. Belkale, S. Kumar, {\em Eigencone, saturation and Horn problems for symplectic and odd orthogonal groups,}
J. Algebraic Geom. 19 (2010), pp. 199--242. 

\bibitem[BeSj]{BeSj}
A. Berenstein, R. Sjamaar,  
Coadjoint orbits, moment polytopes, and the Hilbert-Mumford criterion. 
J. Amer. Math. Soc. 13 (2000), no. 2, pp. 433--466. 


\bibitem[BT]{BT1} F.~Bruhat and J.~Tits, \emph{Groupes r{\'e}ductifs
sur un corps local. {I}}, Inst. Hautes {\'E}tudes
Sci. Publ. Math. \textbf{41} (1972), pp. 5--251.

\bibitem[F]{F}
P. Foth, {\em Generalized Kostant convexity theorems}, 
Proc. Amer. Math. Soc. 137 (2009), no. 1, pp. 297--301. 


\bibitem[Ha1]{Haines1}
T.\ J.\ Haines, {\em Structure constants for Hecke and
representations rings}, IMRN vol. {\bf 39} (2003), pp. 2103--2119.

\bibitem[Ha2]{Haines2}
T.\ J.\ Haines, {\em Equidimensionality of convolution morphisms and applications to saturation problems}, Adv. Math.  207  (2006),  no. 1, pp. 297--327. 


\bibitem[GHKR1]{GHKR1} U.~G\"{o}rtz, T.~Haines, R.~Kottwitz, D.~Reuman, {\em Dimension of some affine Deligne-Lusztig varieties},  Ann. sci. \'Ecole Norm. Sup. 4$^e$ s\'erie, t. \textbf{39} (2006), pp. 467--511.  


\bibitem[GHKR2]{GHKR2} U.~G\"{o}rtz, T.~Haines, R.~Kottwitz, D.~Reuman, {\em Affine Deligne-Lusztig varieties in affine flag varieties}, Compositio Math. {\bf 146} (2010), 1339-1382.

\bibitem[HN]{HN} T. Haines, B.C. Ng\^{o}, {\em Alcoves associated to
special fibers of local models}, Amer. J. Math. {\bf 124} (2002),
no. 6, pp. 1125--1152.

\bibitem[KKM]{KKM}
M. Kapovich, S. Kumar and J. Millson, {\em The eigencone and saturation for} Spin(8). Pure and Applied Mathematics Quarterly, Vol. 5, N2 (Hirzebruch Special Issue, Part 1), 2009, pp. 755-780.


\bibitem[KLM1]{KLM1} M. Kapovich, B. Leeb, J. Millson, {\em Convex functions on symmetric spaces,
side lengths of polygons and the stability inequalities for
weighted configurations at infinity},  Journal of Diff. Geom. {\bf 81} (2009), 297--354.

\bibitem[KLM2]{KLM2} M. Kapovich, B. Leeb, J. Millson, {\em Polygons in buildings and their refined side lengths}, Geometry And Functional Analysis, {\bf 19} (2009), pp. 1081-1100.

\bibitem[KLM3]{KLM3} M. Kapovich, B. Leeb, J. Millson, {\em The generalized triangle inequalities
in symmetric spaces and buildings with applications to algebra},
Memoirs of the AMS {\bf 192},  (2008).

\bibitem[KM1]{KM1}
M. Kapovich, J. Millson,  {\em Structure of the tensor product semigroup}, Asian Math Journal (S.S.Chern memorial issue) Vol. 10, N. 3 (2006) pp. 493--540. 


\bibitem[KM2]{KM2}  M. Kapovich, J. Millson, {\em A path model for geodesics in Euclidean buildings
and applications to representation theory},  Groups, Geometry
and Dynamics 2 (2008), pp. 405--480.

\bibitem[KR]{KR} R. Kottwitz, M. Rapoport, {\em Minuscule alcoves for
$GL \sb n$ and $GSp \sb {2n}$}, Manuscripta Math. {\bf 102} (2000),
no. 4, pp. 403--428.

\bibitem[Li]{Li} P. Littelmann, {\em Characters of representations and paths in ${\mathfrak h}^*_{\mathbb R}$}, Representation Theory and Automorphic Forms (Edinburgh, 1996), Proc. Symp. Pure Math., Vol. {\bf 61}, American Mathematical Society, Rhode Island, 1997, pp. 29--49.



\bibitem[S]{S}
S. Sam, {\em	Symmetric quivers, invariant theory, and saturation theorems for the classical groups}, arXiv:1009.3040, 2010. 


\end{thebibliography}
\end{document}